\documentclass[english,11pt]{article}
\usepackage[german,french,english]{babel}
\usepackage[cp850]{inputenc}
\usepackage{latexsym,graphicx, fancybox}
\usepackage{graphicx}
\usepackage{booktabs}

\usepackage{amssymb, amsmath, amsfonts}
\usepackage{amsmath, amssymb}
\usepackage{color}
\usepackage{tikz}
\usepackage{latexsym}

\usepackage[colorlinks=true, allcolors=blue]{hyperref}    
\usepackage{tocloft}        
\usepackage{comment}
\usepackage{mathrsfs}

\usepackage{float}   
\usepackage{graphicx}  
\usepackage{hyperref}  
\usepackage{latexsym}  
\usepackage{xcolor}   
\usepackage{lipsum}   
\usepackage{subcaption}

\usepackage{amssymb}  
\usepackage{amsmath}  
\usepackage{amsbsy}
\usepackage{amsthm}
\usepackage{bbm}    
\usepackage{eucal}   
\usepackage{mathrsfs}  

\usepackage{multirow}  
\usepackage{ulem}    
\usepackage{dsfont}

\usepackage[round]{natbib}
\usepackage[textsize=small]{todonotes}

\usepackage{etoolbox}
\apptocmd{\thebibliography}{\setlength{\itemsep}{3.246pt}}{}{}

\usepackage{microtype}
	\usepackage{amsthm}
	
	\newcommand{\dd}{\,\mathrm{d}}
	\newcommand{\R}{\mathbb{R}}
	\newcommand{\N}{\mathbb{N}}
	\newcommand{\E}{\mathbb{E}}
	\renewcommand{\P}{\mathbb{P}}

	\newcommand{\p}{\mathbb{P}}
	\newcommand{\F}{\mathbb{F}}
	
	\newcommand{\cF}{{\mathcal F}}
	\newcommand{\cA}{{\mathcal A}}

	\newcommand{\id}{\mathds{1}}
	\newcommand{\as}{\mbox{{\rm a.s.}}}
	
	\newcommand{\Q}{\mathbb{Q}}

	\usepackage{authblk}
	
	\newcommand{\T}{\top}
	\renewcommand{\c}{\alpha}
	\newcommand{\Mid}{{\ \Big|\ }}

	\DeclareMathOperator{\diag}{diag}

	\newtheorem{Theorem}{Theorem}[section]
	\newtheorem{Definition}[Theorem]{Definition}
	\newtheorem{Proposition}[Theorem]{Proposition}

	\newtheorem{Lemma}[Theorem]{Lemma}
	
	\newtheorem{Remark}[Theorem]{Remark}
	\newtheorem{Example}[Theorem]{Example}
	
	\newtheorem{assumption}{Assumption}[section]
	\newtheorem*{note}{Note}
	
	\usepackage{thm-restate}
	\usepackage{float}
	\usepackage{hyperref}
	
\begin{document}

\selectlanguage{english}


\title{\bf 
	On Utility Maximization under Multivariate Fake Stationary Affine Volterra Models 
}

\author{
	Emmanuel Gnabeyeu\footnote{Laboratoire de Probabilités, Statistique et Modélisation, UMR 8001, Sorbonne Université and Universit\'e Paris Cit\'e, 4 pl. Jussieu, F-75252 Paris Cedex 5, France. E-mail: {\tt emmanuel.gnabeyeu\_mbiada@sorbonne-universite.fr}}\footnote{This research benefited from the support of "Ecole Doctorale Sciences Mathematiques de Paris Centre".}
} 
 \date{March 12, 2026} 
\maketitle
\vspace{-1cm} 
\renewcommand{\abstractname}{Abstract}
\begin{abstract}
	 This paper is concerned with Merton's portfolio optimization problem in a Volterra stochastic environment described by a multivariate fake stationary Volterra--Heston model.
	Due to the non-Markovianity and non-semimartingality of the underlying processes, the classical stochastic control
	approach cannot be directly applied in this setting. Instead, the problem is tackled using a stochastic factor solution to a Riccati backward stochastic differential equation (BSDE). 
	Our approach is inspired by the martingale optimality principle combined with a suitable verification argument.	
	The resulting optimal strategies for Merton's problems are derived in semi-closed form depending on the solutions to time-dependent multivariate Riccati-Volterra equations, while the optimal value is expressed using the solution to this original Riccati BSDE. 
	Numerical results on a two dimensional fake stationary rough Heston model illustrate the impact
	of stationary rough volatilities on the optimal Merton strategies.
\end{abstract}

\noindent \textbf{\noindent {Keywords:}} Affine Volterra Processes, Stochastic Control, Martingale Optimality Principle, Backward Stochastic Differential Equations (BSDE), Fractional Differential Equations, Riccati Equations, Functional Integral Equation.

\medskip
\noindent\textbf{Mathematics Subject Classification (2020):} \textit{ 34A08, 34A34, 45D05, 60G10, 60G22, 60H10, 91B70, 91G80,93E20}

\setcounter{tocdepth}{2}
\vspace{-.5cm}
\section{Introduction}
\vspace{-.2cm}
\noindent The modeling of asset price dynamics has undergone a paradigm shift with the empirical observation that both implied and realized volatilities of major financial indices exhibit significantly rougher sample paths~\citep{GatheralJR2018} than those generated by classical Brownian-motion-based models. This observation has sparked a rapidly expanding body of research on rough volatility models. Motivated by the widespread practical success of the celebrated~\cite{Heston1993} stochastic volatility framework, several rough extensions have been proposed, most notably the rough Heston model~\citep{el2019characteristic} builds on market microstructure and its generalisation, the Volterra Heston model in~\cite{abi2019affine}. Recent remarkable advances include the introduction of the so-called \textit{fake stationary Volterra Heston models} in~\cite{EGnabeyeuPR2025, EGnabeyeuR2025} with the aim of providing a unified and consistent framework that captures both short- and long-maturity behaviors and enables robust fitting across the entire term structure. This broader class of volatility models, encompassing the aforementioned specifications, is obtained by modeling the volatility process as a stochastic Volterra equation of convolution type with a time-dependent diffusion coefficient. Therefore, this paper focuses on the financial market within the fake stationary affine Volterra model.



\medskip
\noindent Substantial progress has recently been achieved in the study of option pricing problems and asymptotic analysis under rough volatility dynamics. By contrast, portfolio optimization in such models remains comparatively underexplored, although it has attracted growing interest in recent years. 
Notable contributions include \cite{fouque2018aoptimal,BaeuerleDesmettre2020,HanWong2020b}, which investigate optimal investment problems with power utility in fractional Heston-type models. 
Despite these advances, the overwhelming majority of developments in rough volatility, whether for asset modeling, derivative pricing, or portfolio selection, have been confined to the mono-asset case. From a practical standpoint, however, multi-asset allocation with correlated sources of risk constitutes a fundamental dimension of portfolio management; see, for instance, \cite{BuraschiPorchiaTrojani2010}. 

\medskip
\noindent Merton's portfolio optimization problem which consists in maximizing an investor's expected utility from terminal wealth with respect to a given utility function has served as a cornerstone in mathematical finance. It is the most classic financial economic approach to understand how the market volatility affect investment demands.
In the classical~\cite{Heston1993} stochastic volatility framework, this problem was solved explicitly in~\cite{Kraft2005}, building on the representation results of~\cite{zariphopoulou2001solution}, and extensions to general affine stochastic volatility models were obtained in~\cite{KallsenMuhleKarbe2010} using stochastic control theory.
In the Volterra framework, however, the volatility process is non-Markovian, which prevents the direct application of the classical stochastic control methods based on the Hamilton--Jacobi--Bellman (HJB) partial differential equation. 
In order to circumvent this difficulty, \cite{HanWong2020b}, inspired by~\cite{fouque2018aoptimal}, adopt a martingale distortion ansatz and apply the martingale optimality principle to derive explicit optimal investment strategies in a single-asset or mono-asset Volterra--Heston model.

\medskip
\noindent Motivated by several important empirical stylized facts about real financial markets such as choice among multiple assets, rough volatility behavior, correlations across stocks or assets and leverage effects (i.e., correlation between
a stock and its volatility), multivariate rough volatility models have recently been developed ; see, e.g.,~\cite{abi2019affine,TomasRosenbaum2021}. 
 In~\cite{AichingerDesmettre2021}, the authors analyze the Merton portfolio problems with power utility for a class of multivariate Volterra Heston models that features 
correlation between a
stock and its volatility.
\noindent In the present paper, we solve the Merton portfolio problem for investors with \textit{power, exponential} and  \textit{logarithmic utility functions}, within the class of the so-called \textit{fake stationary multivariate affine Volterra models}.

\medskip
\noindent {\bf Main contributions.}
Building upon recent developments in 
Volterra models ~\citep{ 
 EGnabeyeuPR2025,EGnabeyeuR2025} and motivated by recent works and advances on multivariate Volterra volatility modeling \cite{abi2019affine, TomasRosenbaum2021,AichingerDesmettre2021}, the primary objective of this paper is to advance the literature on utility maximization along two main directions:
\begin{itemize}
	\item[(i)] We introduce a class of \textit{ multivariate fake stationary affine Volterra stochastic volatility models} that capture key stylized features of financial markets, including heterogeneous roughness across assets, 
	and leverage effects namely, dependence between asset returns and their respective volatilities while maintaining a \textit{consistent modeling framework across time scales}, from short to long maturities.
	\item[(ii)] This model preserve analytical tractability, thereby enabling explicit characterization of the optimal investment strategy for the Merton's problem, despite the intrinsic challenges posed by multivariate non-Markovian dynamics. The optimal value is characterized by the solution of an associated Riccati BSDE (Riccati BSDE) whose generator is quadratic in the Brownian stochastic integrand (denoted by \(\Lambda\)). 
	
	 \item[(iii)] It further extends the models of~\cite{abi2019affine,TomasRosenbaum2021} to an inhomogeneous setting, following~\cite{EGnabeyeuPR2025,EGnabeyeuR2025}. Within the framework of Volterra Heston models, our results extend the one-dimensional exponential utility case of~\cite{HanWong2020b} to the multidimensional setting, while encompassing the power utility results of~\cite{HanWong2020b, AichingerDesmettre2021} as special cases.
	  In addition, we incorporate the logarithmic utility case.
\end{itemize}

\medskip
\noindent {\sc \textbf{Organization of the Work.}}  
\noindent The outline of the paper is as follows: Section~\ref{Sect:affine} gives an overview of the model which is needed throughout the paper:  We introduce the multi asset financial
market, where volatility is modeled by a multivariate class of \textit{fake stationary Volterra square root process}, and we state the optimization problem. 
\noindent For such a market model we consider in section~\ref{Sec:SolMerton} 
two different approaches to solve the Merton portfolio problem. More precisely, in section~\ref{Sec:SolMerton} we investigate the classical problem of maximizing the expected utility of terminal wealth in a multi-asset fake stationary Volterra--Heston volatility market, for each of the power, exponential and logarithmic utility preferences.
We first adapt the martingale distortion transformation used in~\cite{fouque2018aoptimal,HanWong2020b} to the (degenerate) multivariate case. However, as it is pointed out in~\cite{AbiJaberMillerPham2021,AichingerDesmettre2021}, this only works if the correlation structure is highly degenerate.
Inspired by the techniques
used in~\cite{HuImkellerMueller2005, BauerleLi2013, AichingerDesmettre2021}, we then provide a solution for the Merton portfolio optimization problem for a more general correlation structure using a verification argument.  In Section~\ref{Sec:Num}, we demonstrate the practical implications of our findings through numerical experiments based on a two-dimensional fake stationary rough Heston model. 
 Finally, Section~\ref{sect:proofMresult} is devoted to the proofs of the main results.

\noindent {\sc \textbf{Notations.}} 

\smallskip
\noindent $\bullet$ Denote $\mathbb{T} = [0, T] \subset \mathbb{R}_+$, ${\rm Leb}_d$ the Lebesgue measure on $(\R^d, {\cal B}or(\R^d))$, $\mathbb H :=\R^d, $ etc.

\noindent $\bullet$ $\mathbb{X} := C([0,T], \mathbb H) (\text{resp.} \quad C_0([0,T], \mathbb H))$ denotes the set of continuous functions(resp. null at 0)  from $[0,T]$ to $\mathbb H $ and ${\cal B}or(C_d)$ denotes the  Borel $\sigma$-field of ${ C}_d$ induces by the $\sup$-norm topology. 

\smallskip 
\noindent $\bullet$ For $p\in(0,+\infty)$, $L_{\mathbb H}^p(\P)$ or simply $L^p(\P)$ denote the set of  $\mathbb H$-valued random vectors $X$  defined on a probability space $(\Omega, {\cal A}, \P)$ such that $\|X\|_p:=(\E[\|X\|_{\mathbb H}^p])^{1/p}<+\infty$. 

\smallskip 
\noindent $\bullet$ Let \(\mathcal{M}\) denote the space of all $(\R_+, {\cal B}or(\R))$-measurable functions \(m\) on \(\mathbb{R}_+\) such that the restriction \(\mu|_{[0, T]}\), for any \(T > 0\), is a \(\mathbb{R}\)-valued finite measure (i.e. the restriction $m|_{[0,T]}$ with $T > 0$ is well-defined). For \(m \in \mathcal{M}\) and a compact set \(E \subset \mathbb{R}_+\), we define the total variation of \(m\) on \(E\) by:

\centerline{$|m|(E) := \sup \left\{ \sum_{j=1}^N |m(E_j)| : \{E_j\}_{j=1}^N \text{ is a finite measurable partition of } E \right\}.$}
\noindent We assume that the set of measure $m \in \mathcal{M}$ on \(\R_+\) is of locally bounded variation.

\smallskip 
\noindent $\bullet$ Convolution between a function and a measure. Let \(f : (0, T] \to \mathbb{R}\) be a measurable function and \(m \in \mathcal{M}\). Their convolution (whenever the integral is well-defined) is defined by
\vspace{-.2cm}
\begin{equation}\label{eq:convolmeasure}
	(f * m)(t)= \int_{[0,t)} f(t - s) \, dm(s) = \int_{[0,t)} f(t - s) \, m(ds) = (f\stackrel{m}{*}\mathbf{1})_t, \quad t \in (0, T].
\end{equation}
\noindent $\bullet$ $X\perp \! \! \!\perp Y$  stands for independence of random variables, vectors or processes $X$ and $Y$.  

\noindent $\bullet$ For a measurable function \( \varphi: \mathbb{R}^+ \to \mathbb{R} \), \(\forall p \geq 1,\) we denote: 

\centerline{$
	  \| \varphi \|^p_{L^p([0,T])} := \int_0^{T} |\varphi(u)|^p \, du,  \; \displaystyle \|\varphi\|_{\infty}=\|\varphi\|_{\sup} := \sup_{u\in \mathbb{R}^+}|\varphi(u)| \; \text{and} \; \displaystyle \|\varphi\|_{\infty,T}=\|\varphi\|_{\sup,T} := \sup_{u\in [0,T]}|\varphi(u)|.
	$}

\smallskip 
\noindent $\bullet$ $\Gamma(a) = \int_0^{+\infty} u^{a-1} e^{-u} \, du, \quad a > 0, \quad 
\text{and} \quad 
B(a, b) = \int_0^1 u^{a-1} (1 - u)^{b-1} \, du, \quad a, b > 0.$
We set \(\R_+=[0,+\infty)\), \(\R_-=(-\infty,0]\).

\smallskip 
\noindent $\bullet$ Let \([0,T]\) be a finite time horizon, where \(T<\infty\). Given a complete probability space $(\Omega,\cF,\P)$ and a  filtration $\F=(\cF_t)_{t \geq 0}$ satisfying the usual conditions (We equip $(\Omega,\cF,\P)$ with a right-continuous, $\P-$complete filtration $\F$), we denote by
\vspace{-.2cm} 
\begin{align*}
	L^{\infty}_{\F}([0,T], \R^d) &= \left\{ Y:\Omega \times [0,T]\mapsto \R^d, \; \F-\text{prog.~measurable and bounded a.s.} \right\} \\
	L^p_{\F}([0,T], \R^d) &= \left\{ Y:\Omega \times [0,T]\mapsto \R^d, \; \F-\text{prog.~measurable s.t.~} \E\Big[ \int_0^T |Y_s|^p ds \Big] < \infty   \right\} 
\end{align*}
\begin{align*}
	{\mathbb{S}^{\infty}_{\F}([0,T], \R^d)} &= \left\{ Y:\Omega \times [0,T]\mapsto \R^d, \; \F-\text{prog.~measurable s.t.~} \sup_{t\leq T} |Y_t(w)|< \infty \mbox{ a.s.} \right\}\\
	{\mathbb{S}^{p}_{\F}([0,T], \R^d)} &= \left\{ Y:\Omega \times [0,T]\mapsto \R^d, \; \F-\text{prog.~measurable s.t.~} \E\Big[\sup_{0\leq t\leq T} |Y_t|^p\Big] < \infty \right\}. 
\end{align*}
Here $|\cdot|$ denotes the Euclidian norm on $\R^d$.  Classically, for $p \in (1, \infty  )$, we define $L^{p, loc}_{\F}([0,T], \R^d)$ as the set of progressive 
processes $Y$ for which there exists a sequence of increasing stopping times 
$\tau_n \uparrow \infty$ such that the stopped processes $Y^{\tau_n}$ are in $L^{p}_{\F}([0,T], \R^d)$ for every $n \geq 1$, and we recall that it consists of all progressive processes $Y$ s.t. 
$ \int_0^T |Y_t|^p dt$ $<$ $\infty$, a.s. Likewise for $\mathbb{S}^{p, loc}_{\F}([0,T], \R^d)$. To unclutter notation, we write $L^{p, loc}_{\F}([0,T])$  instead of $L^{p, loc}_{\F}([0,T], \R^d)$ when 
the context is clear. 

\smallskip 
\noindent $\bullet$  We will use the matrix norm \(|A| = \operatorname{tr}(A^{\top}A)\)
in this paper.

\smallskip 
\noindent $\bullet$ Let $M$ denote a continuous semimartingale. The stochastic exponential
$\mathcal{E}(M)$ is given by
\[
\mathcal{E}(M)_t = \exp\left( M_t - \tfrac{1}{2} \langle M \rangle_t \right), 
\quad t \in [0,T],
\]
where $\langle M \rangle$ denotes the quadratic variation of $M$.

\medskip
\noindent Our problem is defined under a given complete probability space \((\Omega,\mathcal{F},\P)\), with a filtration
\(\mathbb{F} = \{\mathcal{F}_t\}_{0\leq t\leq T}\) satisfying the usual conditions, supporting a \( 2d\)-dimensional Brownian motion
\( (B, B^\top)\) for \(d\geq1\). The filtration \(\mathbb{F}\) is not necessarily the augmented filtration generated by \( (B, B^\top)\) ;
thus, it can be a strictly larger filtration. Here \(\P\) is a real-world probability measure from which a family of equivalent probability measures can be generated. 
\vspace{-.5cm}
\section{Preliminaries: Multivariate fake stationary affine Volterra models}\label{Sect:affine}
\noindent Fix $T > 0$, $d\in \N$.
We let $K=\diag(K_1,\ldots,K_d)$ be diagonal with scalar kernels $K_i\in L^{2}([0,T],\R)$ on the diagonal, $\varphi=\diag(\varphi^1,\ldots,\varphi^d)$, $\nu=\diag(\nu_1,\ldots,\nu_d)$, $\varsigma=\diag(\varsigma^1,\ldots,\varsigma^d)$ with \(\varsigma^i\) a (locally) bounded Borel function and $D:=-\diag(\lambda_1,\ldots,\lambda_d) \in \R^{d\times d}$.
Let $V=(V^1,\ldots, V^d)^\top$  be the following $\R^d_+$--valued scaled Volterra square--root process driven by an $d$-dimensional process $W=(W^1,\ldots,W^d)^\T$:
\vspace{-.2cm}
\begin{equation} 
	\label{VolSqrt_}
	\begin{aligned}
		V_t = \varphi(t) V_0 + \int_0^t K(t-s) \big(\mu(s) + D V_s\big) ds  + \int_0^t K(t-s) \nu \varsigma(s)\sqrt{\diag(V_s)}dW_s, \quad V_0\perp\!\!\!\perp W.
	\end{aligned}
\end{equation}
Here $\mu:\R_+\to \R^{d}$, $W$ is a $d$-dimensional Wiener process.
Note that the drift $b(t,x) = \mu(t)+ Dx$ is clearly Lipschitz continuous in $x\in\R^d$,  uniformly in $t\!\in \mathbb{T}_+$ and both the drift term \(b\) and the diffusion coefficient \(\sigma(t, x)=\nu\varsigma(t)\sqrt{\diag(x)}\) are of linear growth, i.e. there is a constant \(C_{b,\sigma} > 0\) such that
\vspace{-.1cm}
\[
\|b(t, x)\| + \||\sigma(t, x)|\| \leq C_{b,\sigma}(1 + \|x\|),
\quad \text{for all } t \in [0, T] \text{ and } x \in \mathbb{R}^d.
\] 
We always work under the assumption below, which applies to the inhomogeneous Volterra equation~\eqref{VolSqrt_}.
\begin{assumption}[On Volterra Equations with convolutive kernels]\label{assump:kernelVolterra} 
	Assume that \(K\) is diagonal with scalar kernels $K_i$ on the diagonal for \(i=1,\ldots,d\) that is completely monotone on $(0, \infty)$ and satisfies for any $T > 0$:
	\noindent
	\begin{enumerate}
		\item[(i)] The kernel  $K_i$ is strictly positive 
		and fulfills:
		\begin{itemize}
			\item The integrability assumption: The following is satisfied for some $\widehat\theta_i\in (0,1]$.
			\vspace{-.2cm}
			\begin{equation}\label{eq:contKtilde}
				(\widehat {\cal K}^{cont}_{\widehat \theta})\;\;\exists\,\widehat{\kappa_i}< +\infty,\;\forall\bar{\delta}\!\in (0,T],\; \widehat \eta(\delta) :=  \sup_{t\in [0,T]} \left[\int_{(t-\bar{\delta})^+}^t \hskip-0,25cm K_i\big(t-u\big)^2 du\right]^{\frac12}\le \widehat \kappa_i \,\bar{\delta}^{\,\widehat \theta_i}.
			\end{equation}
			\item  The continuity assumption: \(({\cal K}^{cont}_{\theta}) \;\;  
			\exists\, \kappa_i< +\infty,\; \exists \; \theta_i\in (0,1] \; \text{such that}\; \forall \,\bar{\delta}{\in (0, T)}\)
			\vspace{-.2cm}
			\begin{equation}\label{eq:Kcont}
				({\cal K}^{cont}_{\theta}) \; \forall \,\bar{\delta}{\,\in (0, T)},\; \eta(\bar{\delta}):= \sup_{t\in [0,T]} \left[\int_0^t |K_i(\big(s+\delta)\wedge T\big)-K_i(s)|^2ds \right]^{\frac 12} \le  \kappa_i\,\bar{\delta}^{\theta_i}.
			\end{equation}
			
		\end{itemize}  
		\item[(ii)] Finally, assume that \( V_0^i \in L^p(\mathbb{P}) \) for some suitable \( p \in (0, +\infty) \), such that
		the process $t \to v_0^i(t) =V_0 \varphi^i(t)$ is absolutely continuous and $(\mathcal F_t)$-adapted.
		Moreover, for some $\delta_i > 0$, for any $p > 0$,  
		\[
		\mathbb{E} \,\!\Big(\sup_{t \in [0,T]} |v_0^i(t)|^p\Big) < +\infty,\quad 
		\mathbb{E}\!\big[\,|v_0^i(t') - v_0^i(t)|^p\,\big] 
		\le C_{T,p} \Big( 1 + \mathbb{E}\,\big[\sup_{t \in [0,T]} |v_0^i(t)|^p\big] \Big) |t' - t|^{\delta_i p}.
		\]
	\end{enumerate}
\end{assumption}
\vspace{-.2cm}
\noindent {\bf Remark:} For \(i=1,\ldots,d\) , as $K_i$ is completely monotone on $(0,\infty)$ and not identically zero, we have that
$K_i$ is nonnegative, not identically zero, non-increasing and continuous on $(0,\infty)$, and it follows from \cite[Theorem 5.5.4]{gripenberg1990} that $K_i$ has a resolvent of the first kind $r_i$ which is nonnegative and non-increasing in the sense that \(s \mapsto r_i([s,s+t])\)
is non-increasing $\forall t \ge 0$.

\medskip
\noindent In the case of \(\alpha-\) fractional kernel (corresponding to \(K_i=K_{\alpha_i}\) with \(\alpha_i \in [\frac12,1)\)), by~\cite{EGnabeyeuR2025}, Equation~\eqref{VolSqrt_} admits at least a unique-in-law positive weak solution as a scaling limit of a sequence of 
time-modulated Hawkes processes with heavy-tailed kernels in a nearly unstable regime.
Moreover, under assumption~\ref{assump:kernelVolterra} for some $p>0 $, a solution \( t \mapsto V_t^i \) to  Equation~\eqref{VolSqrt_}  starting from   $V_0^i$ 
has a \( \big( \delta_i \wedge \theta_i \wedge \widehat \theta_i - \eta \big) \)-H\"older pathwise continuous modification  on $\R_+$ for sufficiently small \( \eta > 0 \) and satisfying (among other properties),  
\vspace{-.3cm}
\begin{equation}\label{eq:L^p-supBound}
	\forall\, T>0, \; \exists \,C_{_{T,p} }>0,\quad \big\| \sup_{t\in[0,T]}\|V_t\| \big\|_p \le C_{_{T,p} } \left( 1 + \big\| \sup_{t \in [0,T]} \|\varphi(t)V_0 \|\big\|_p \right).
\end{equation}
Note that under our assumptions, if \( p > 0 \) and \( \mathbb{E}[\|\varphi(t)V_0\|^p] < +\infty \) for every \(t\geq0\), then by~\eqref{eq:L^p-supBound}, \( \mathbb{E}[\sup_{t \in [0,T]} \|V_t\|^p] < C_T (1 + \mathbb{E}[\sup_{t \in [0,T]}\|\varphi(t) V_0\|^p]) < +\infty \) for every \( T > 0 \). Combined with the linear growth in Assumption~\ref{assump:kernelVolterra}(ii) \( \||\sigma(t,x)|\| \leq C'_T(1 + \|x\|) \) for \( t \in [0,T] \), this implies \( \mathbb{E}[ \sup_{t \in [0,T]}  \||\sigma(t, X_{ t})|\|^p] < C'_T (1 +\mathbb{E}[\sup_{t \in [0,T]}\|\varphi(t) V_0\|^p]) < +\infty \) for every \( T > 0 \), enabling the unrestricted use of both regular and stochastic Fubini's theorems.
Sufficient conditions for interchanging the order of ordinary integration (with respect to a finite measure) and stochastic integration (with respect to a square integrable martingale) are provided in  \cite[Thm.1]{Kailath_Segall},  and further details can be found in \cite[Thm. IV.65]{Protter}, \cite[ Theorem 2.6]{Walsh1986}, \cite[ Theorem 2.6]{Veraar2012}.
\begin{Remark}\label{rm:Kernels}
	This covers, for instance,  constant non-negative kernels,  fractional kernels  of the form $\frac{t^{\alpha-1}}{\Gamma(\alpha)}\mathbf{1}_{\mathbb{R}_+}$ with $\alpha  \in(\frac12,1]$, exponentially decaying kernels ${\rm e}^{-\beta t}$ with $\beta>0$ and more generally the gamma kernel \(K(t) = \frac{t^{\alpha-1}}{\Gamma(\alpha)} e^{-\beta t}\mathbf{1}_{\mathbb{R}_+}
	\) with \( \alpha \in \left( \tfrac{1}{2}, 1 \right] \) and \( \beta \ge 0 \) ( see e.g. \cite[Propositions 6.1 and 6.3]{EGnabeyeu2025}, \cite[ Example 2.2 ]{GnabeyeuPages2026}). These kernels satisfy conditions~\eqref{eq:contKtilde}--\eqref{eq:Kcont}, that is, $(\widehat {\cal K}^{cont}_{\widehat \theta})$ and $({\cal K}^{cont}_{\theta})$, for $\alpha >1/2$ with $\theta = \widehat \theta = \min\bigl( \alpha-\frac12,\; 1\bigr)$.
	
	\noindent The roughness of the volatility paths is determined by the parameter $\alpha$ linked to the Hurst parameter $H$ via the relation $\alpha=H+\frac{1}{2}$.
	 For $\alpha\rightarrow 1$ we recover the classical markovian square root process.
\end{Remark}

\subsection{Stabilizer and fake stationarity regimes.}\label{subsec:fakeStat}
\begin{Definition}[Fake Stationarity Regimes]
	Let \( (V_t)_{t \geq 0} \) be a solution to the scaled Volterra equation~\eqref{VolSqrt_} starting from any \( V_0 \in L^2 (\P) \). 
	Then, the process $(V_t)_{t\ge 0}$ exhibit a \textit{fake stationary regime of type I} in the sense of \cite{Pages2024,EGnabeyeu2025} if it has constant mean and variance over time i.e.:
	{\small
		\begin{equation}\label{eq:fs1_}
			\forall\, t\ge 0, \quad \mathbb{E}[V_t] = \textit{c}^{\text{ste}} \quad \mbox{and} \quad \text{Var}(V_t) = \textit{c}^{\text{ste}} = v_0 \in \R_+^d.
		\end{equation}
	}
\end{Definition}
\noindent For every $\lambda \!\in \R$,  the \textit{ resolvent or Solvent core} $R_{\lambda}$ associated to a real-valued kernel $K$, known as the \textit{ $\lambda$-resolvent of $K$} is defined as the unique solution -- if it exists --  to the deterministic Volterra equation
\begin{equation}\label{eq:Resolvent_}
	\forall\,  t\ge 0,\quad R_{\lambda}(t) + \lambda \int_0^t K(t-s)R_{\lambda}(s)ds = 1.
\end{equation}
or, equivalently, written in terms of convolution, 
$R_{\lambda}+\lambda K*R_{\lambda} = 1$ and admits the formal \textit{Neumann series expansion} \(R_{\lambda} =\mbox{\bf 1}* \big(\sum_{k\ge 0} (-1)^k \lambda^k K^{k*}\big)\)
where \(K^{k*}\) denotes the \(k\)-th convolution of \(K\)  with the convention, $K^{0*}= \delta_0$ (Dirac mass at $0$).

\medskip
\noindent{\bf Remark} If $K$ is regular enough (say continuous) the resolvent $R_{\lambda}$ is differentiable and one checks that $f_{\lambda}=-R'_{\lambda}$ satisfies for every  $t>0$, \(-f_{\lambda}(t) +\lambda \big( R_{\lambda}(0)K(t) - K *f_{\lambda}(t)\big)=0\)
that is $f_{\lambda}$ is solution to the equation
\begin{equation}\label{eq:flambda-eq}
	f_{\lambda} +\lambda K *f_{\lambda}=\lambda   K \quad \text{and reads}\quad f_{\lambda} = \sum_{k\ge 1} (-1)^k \lambda^k K^{k*},\quad K^{0*}= \delta_0. 
\end{equation}
\begin{Example}\label{Ex:fractionalkernel}
	Denote by \(E_\alpha\) the standard Mittag-Leffler function.  For the \(\alpha-\)fractional kernels defined in Remark~\ref{rm:Kernels}  the identity \(K_\alpha * K_{\alpha'} = K_{\alpha+\alpha'}\) holds for \(t \geq 0\) so that
	\vspace{-.2cm}
	\begin{equation*}
		R_{\alpha,\lambda}(t) 
		= \sum_{k \geq 0} (-1)^k \frac{\lambda^k t^{\alpha k}}{\Gamma(\alpha k + 1)} 
		= E_\alpha(-\lambda t^\alpha), \; \text{and } \;
		f_{\alpha,\lambda}(t) = -R'_{\alpha,\lambda}(t) 
		= \lambda t^{\alpha - 1} \sum_{k \geq 0} (-1)^k \lambda^k 
		\frac{t^{\alpha k}}{\Gamma(\alpha(k+1))}.
	\end{equation*}
\end{Example}
\vspace{-.2cm}
\noindent We will always work under the following assumption.
\begin{assumption}[$\lambda$-resolvent $R_{\lambda}$ of the kernel]\label{ass:resolvent} For \(i=1,\cdots,d\), we assume that the $\lambda_i$-resolvent $R_{\lambda_i}$ of the kernel $K_i$ satisfies the following for every $\lambda_i > 0$:
	\begin{equation}\label{eq:hypoRlambda_}
		({\cal K})\quad
		\left\{
		\begin{array}{ll}
			(i) & R_{\lambda_i}(t) \text{ is } \text{differentiable on } \mathbb{R}^+,\; R_{\lambda_i}(0)=1 \text{ and } \lim_{t \to +\infty}R_{\lambda_i}(t) =a_i \in [0,1[, \\
			(ii) &   f_{\lambda_i} \in {\cal L}_{\text{loc}}^2(\mathbb{R}_+, \text{Leb}_1), \; \text{ for } \; t > 0,\; L_{f_{\lambda_i}}(t) \neq 0\; dt-a.e., \text{ where } f_{\lambda_i} := -R'_{\lambda_i},\\
			(iii) & \varphi^i \in {\cal L}^1_{\mathbb{R}_+}(\text{Leb}_1), \text{ is continuous, satisfying} \; \lim_{t \to \infty}\varphi^i(t) = \varphi_\infty^i, \text{ with } a_i \varphi_\infty^i < 1, \\
			(iv) & \mu \text{ is a 
				$ C^1$-function such that }  \|\mu\|_{\sup}  <\infty  \text{ and }  \lim_{t\to +\infty} \mu (t) = \mu_{\infty} \in \mathbb{R}^d.
		\end{array} 
		\right.
	\end{equation}
\end{assumption}
\vspace{-.2cm}
\noindent {\bf Remark:}
Under the assumption \(	({\cal K})\), \( f_{\lambda_i} \) is a \((1-a_i)\)-sum measure, i.e., \( \int_0^{+\infty} f_{\lambda_i}(s) \, ds = 1-a_i \). Furthermore, \(\lim_{t\to +\infty} \int_0^t f_{\lambda_i}(t-s) \mu^i (s)ds = \mu_{\infty}^i \)  and 	$
\lim_{t \to +\infty} \varphi^i(t) - (f_{\lambda_i} * \varphi^i)(t) = \varphi_\infty^i \,a_i.
$ (see  \cite[Lemma 3.1]{EGnabeyeu2025}).
Finally, if $f_{\lambda_i} = -R'_{\lambda_i} > 0 \text{ for } t > 0$, then $f_{\lambda_i}$ is a probability density in which case,  $R_{\lambda_i}$  is non-increasing.
This is in particular the case for the Mittag-Leffler density function \( f_{\alpha_i, \lambda_i} \) for \( \alpha_i \in(\frac12,1)\), in which case \(f_{\alpha_i, \lambda_i}\) is a completely monotonic function (hence convex), decreasing to 0 while \(1-R_{\alpha_i, \lambda_i}\) is a Bernstein function (see e.g. \cite[Proposition 6.1]{EGnabeyeu2025}).
The Proposition below shows what are the consequences of the three constraints in equation~\eqref{eq:fs1_}.
\begin{Proposition}[Fake stationary Volterra square root process.]\label{prop:timeDen_} 
	Let \( (V_t)_{t \geq 0} \) be a solution to the scaled Volterra square root equation in its form~\eqref{VolSqrt_} starting from any random variable $V_0\in L^2(\Omega, \mathcal{F}, \mathbb{P})$. 
	Then, a necessary and sufficient condition for the relations~\eqref{eq:fs1_} to be satisfied is that for \(i=1,\ldots,d\)
	\vspace{-.1cm}
	{\small	
		\begin{align}
			&\ \mathbb{E}[V_0^i] = \frac{1-a_i}{1-a_i\varphi_\infty^i}\frac{\mu_\infty^i}{\lambda_i}:= v_\infty^i \quad \text{and} \quad	\forall\, t\ge 0, \quad \varphi^i(t)   =1 - \lambda_i \int_0^t K_i(t-s) \left( \frac{\mu^i(s)}{\lambda_i v_\infty^i} - 1 \right) \, \dd s. \label{eq:CondMean_}
			\\ 
			&\text{so that~\eqref{VolSqrt_} reads:}\;	V_t^i = V_0^i - \frac{1}{\lambda_i v_\infty^i}\Big(V_0^i - v_\infty^i\Big) \int_0^t f_{\lambda_i}(t-s) \mu^i(s)\dd s +  \frac{1}{\lambda_i}\int_0^t f_{ \lambda_i}(t-s)\varsigma^i(s)\sqrt{ V^i_{s}}dW^i_s.\label{eq:ConstMean_}
		\end{align}
	}
	\noindent and the couple \( (v_0^i, \varsigma^i(t)) \), where \( v_0^i = \text{Var}(V_0^i) \) must satisfy the functional equation:
	\vspace{-.1cm}
	\begin{equation} \label{eq:VolterraVarTime_1}
		\textit{($E_{\lambda_i, c_i}$)}: \;\forall\, t\ge 0, \; c_i \lambda_i^2\big(1-(\varphi^i(t)-(f_{\lambda_i} * \varphi^i)_t)^2 \big) =  (f_{\lambda_i}^2 * \varsigma^{i 2})(t) \;  \textit{where} \; c_i = \frac { v_0^i }{\nu_i^2v_\infty^i} \;  \textit{i.e.} \; \varsigma^i = \varsigma^i_{\lambda_i,c_i} .
	\end{equation}
\end{Proposition}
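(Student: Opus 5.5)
Since $K$, $\varphi$, $\nu$, $\varsigma$, $D$ are diagonal and the noise enters through $\sqrt{\diag(V_s)}\,dW_s$, the $i$-th component of \eqref{VolSqrt_} involves only $V^i$, $W^i$, $K_i$, $\lambda_i$, $\mu^i$, $\varphi^i$, $\varsigma^i$. The plan is therefore to work coordinatewise: fix $i$ and prove the univariate statement. This is the scalar fake-stationary Volterra square-root result of \cite{EGnabeyeu2025,EGnabeyeuR2025}, and I will reproduce its argument.

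\emph{Step 1: resolvent reformulation.} Write the scalar equation as $V^i=\varphi^iV^i_0+K_i*(\mu^i-\lambda_iV^i)+K_i*(\nu_i\varsigma^i\sqrt{V^i}\,\dot W^i)$. Convolve with $f_{\lambda_i}$ and use $f_{\lambda_i}+\lambda_iK_i*f_{\lambda_i}=\lambda_iK_i$ from \eqref{eq:flambda-eq}. The stochastic convolutions are exchanged by stochastic Fubini, which is justified by the moment bound \eqref{eq:L^p-supBound}. This yields the variation-of-constants form
\[
V^i_t=\big(\varphi^i(t)-(f_{\lambda_i}*\varphi^i)(t)\big)V^i_0+\tfrac1{\lambda_i}(f_{\lambda_i}*\mu^i)(t)+\tfrac{\nu_i}{\lambda_i}\int_0^tf_{\lambda_i}(t-s)\varsigma^i(s)\sqrt{V^i_s}\,dW^i_s .
\]

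\emph{Step 2: the mean condition.} Taking expectations kills the martingale term. Set $m(t)=\E V^i_t$. Then $m(t)=(\varphi^i-f_{\lambda_i}*\varphi^i)(t)\,\E V^i_0+\frac1{\lambda_i}(f_{\lambda_i}*\mu^i)(t)$.

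For necessity, assume $m\equiv\E V^i_0$. Letting $t\to\infty$ and using the limits recorded in the Remark after Assumption~\ref{ass:resolvent} (namely $\varphi^i-f_{\lambda_i}*\varphi^i\to a_i\varphi^i_\infty$ and $f_{\lambda_i}*\mu^i\to(1-a_i)\mu^i_\infty$; I will check the normalization $\int f_{\lambda_i}=1-a_i$ here) gives $\E V^i_0(1-a_i\varphi^i_\infty)=(1-a_i)\mu^i_\infty/\lambda_i$, which is the value $v^i_\infty$. Next, the constancy of $m$ at every $t$ gives $\varphi^i-f_{\lambda_i}*\varphi^i=1-\frac{1}{\lambda_iv^i_\infty}f_{\lambda_i}*\mu^i$. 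To invert this, convolve with $K_i$ and use $\lambda_iK_i*f_{\lambda_i}=\lambda_iK_i-f_{\lambda_i}$; equivalently, apply $(\delta_0-f_{\lambda_i})^{-1}=\delta_0+\lambda_iK_i$, using $R_{\lambda_i}'=-f_{\lambda_i}$. This produces $\varphi^i=1-\lambda_iK_i*\big(\mu^i/(\lambda_iv^i_\infty)-1\big)$, which is \eqref{eq:CondMean_}. Substituting back into Step 1 gives \eqref{eq:ConstMean_}; there the constant $\nu_i$ is absorbed through the normalization of $c_i$. Sufficiency follows by reading the same computations backwards.

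\emph{Step 3: the variance condition.} From \eqref{eq:ConstMean_}, $V^i_t-v^i_\infty=\rho(t)(V^i_0-v^i_\infty)+M_t$, with $\rho=\varphi^i-f_{\lambda_i}*\varphi^i$. The martingale term $M_t$ is centered and uncorrelated with $V^i_0$, because $V_0\perp\!\!\!\perp W$. Itô's isometry together with $\E V^i_s=v^i_\infty$ then gives
\[
\text{Var}(V^i_t)=\rho(t)^2v^i_0+\tfrac{\nu_i^2v^i_\infty}{\lambda_i^2}\big(f_{\lambda_i}^2*\varsigma^{i2}\big)(t).
\]
Imposing $\text{Var}(V^i_t)=v^i_0$ and dividing by $\nu_i^2v^i_\infty/\lambda_i^2$ gives exactly $(E_{\lambda_i,c_i})$, and the converse holds trivially.

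The main obstacle is the careful bookkeeping in Step 2: rigorously inverting the resolvent identity to recover $\varphi^i$, and tracking the constant in the limit $t\to\infty$ (in particular whether $\int f_{\lambda_i}$ equals $1$ or $1-a_i$ and how $a_i$ enters). I would settle this first, using Laplace transforms to confirm the algebra.
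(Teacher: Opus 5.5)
Your proposal is correct and follows essentially the paper's route. The paper notes that the diagonal structure decouples \eqref{VolSqrt_} into $d$ scalar equations and invokes the one-dimensional result \cite[Proposition 3.4 and Theorem 3.5]{EGnabeyeu2025}. That proof is exactly your argument: the resolvent (Wiener--Hopf) reformulation, identification of the mean by letting $t\to\infty$ and inverting with $\delta_0+\lambda_iK_i$, and the It\^o-isometry variance computation.

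On your bookkeeping worries: $\int_0^\infty f_{\lambda_i}(s)\,ds=R_{\lambda_i}(0)-a_i=1-a_i$, so $f_{\lambda_i}*\mu^i\to(1-a_i)\mu^i_\infty$ is the limit consistent with the stated $v^i_\infty$. The factor $\nu_i/\lambda_i$ in front of the stochastic integral, which you correctly obtain, is simply missing from the printed display \eqref{eq:ConstMean_}. It is the source of the $\nu_i^2$ in $c_i$.
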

\noindent {\bf Proof :} This is a  straightforward extension to the multi-dimensional setting of \cite[Proposition 3.4 and Theorem 3.5]{EGnabeyeu2025} (see also \cite[Proposition 4.2 and 4.4]{EGnabeyeuPR2025}).

\begin{Definition}
	We will call the stabilizer (or corrector) of the scaled stochastic Volterra equation ~\eqref{VolSqrt_} the (locally) bounded Borel function $\varsigma=\diag(\varsigma^1,\ldots,\varsigma^d)$ where \( \varsigma^i \) is a solution(if any) to the functional equation \(\textit{($E_{\lambda_i, c_i}$)}\) in~\eqref{eq:VolterraVarTime_1} for \(i=1,\ldots,d\).
\end{Definition}
\begin{Example}\label{Ex:FractionalKernel2}
	Within the setting
	\(\varphi^i(t) = \varphi^i(0) = 1\) for all  \(t \geq 0 \) and \(K_i\) the \(\alpha-\)fractional kernel
	defined in Remark~\ref{rm:Kernels} and Example~\ref{Ex:fractionalkernel} with \(\alpha_i \in \left(\frac{1}{2}, 1\right)\), we have \(\lim_{t \to +\infty} R_{\alpha_i,\lambda_i}=0\). Setting \(a_k = \frac{1}{\Gamma(\alpha k + 1)},
	b_k = \frac{1}{\Gamma(\alpha(k + 1))}, \; k \geq 0\), then the stabilizer \( \varsigma = \varsigma_{\alpha_i,\lambda_i,c_i} \) exists as a non-negative, non-increasing concave function, on \( (0, +\infty) \) (see \cite[Sections 5.1 and 5.2 ]{Pages2024}, \cite[Sections 5.1 and 5.2 ]{EGnabeyeu2025}), such that:
	
	\(\varsigma^2_{\alpha_i,\lambda_i,c_i}(t) = c_i \lambda_i^{2-\frac1{\alpha_i}}\varsigma_{\alpha_i}^2(\lambda_i^{\frac1{\alpha_i}} t)\) where \(\varsigma_{\alpha_i}^2(t):= 2\,t^{1-\alpha_i}\sum_{k\ge 0} (-1)^k c_k t^{\alpha_i k}\) and  the coefficients $(c_k)_{k\geq0}$ are defined by the recurrence formula \(c_0=\frac{\Gamma(\alpha)^2}{\Gamma(2\alpha-1)\Gamma(2-\alpha)}\) and for every \(k\ge 1\)
	{\small 
		\begin{equation}\label{eq:ck_}
			c_k = \frac{\Gamma(\alpha)^2\Gamma(\alpha(k+1))}{\Gamma(2\alpha-1)\Gamma(\alpha k+2-\alpha)}\left[ (a*b)_k- \alpha(k+1)\sum_{\ell=1}^kB\big(\alpha(\ell+2)-1,\alpha(k-\ell-1)+2\big) (b^{*2})_{\ell}  c_{k-\ell}  \right].
		\end{equation}
	}
	where for two sequences of real numbers \( (u_k)_{k \geq 0} \) and \( (v_k)_{k \geq 0} \), the Cauchy product is defined as \( (u * v)_k = \sum_{\ell = 0}^k u_\ell v_{k - \ell} \) and \( B(a, b) = \int_0^1 u^{a-1}(1 -
	u)^{b-1} du\) denoting the beta function. 
	
	\noindent Moreover, \( \left( \liminf_k \left( |c_k|^{1/k} \right) \right)^{-1/\alpha} =\infty\), \(\varsigma_{\alpha_i,\lambda_i,c_i}(0) =0\) and \(\lim_{t \to +\infty} \varsigma_{\alpha_i,\lambda_i,c_i}(t) = \frac{\sqrt{c_i}\lambda_i}{\|f_{\alpha_i,\lambda_i}\|_{L^2(\text{Leb}_1)}}\).
\end{Example}
\noindent Set \(E_{D,c} = \bigcup_{i=1}^{d} E_{\lambda_i, c_i}\). From now on, we will assume that there exists a unique positive bounded Borel solution \(\varsigma = \varsigma_{D,c}\) on \((0,+\infty)\) of the system of equation \((E_{D, c})\) so that, the corresponding time-inhomogeneous Volterra square root equation~\eqref{VolSqrt_} is refered to as a \textit{Multivariate Stabilized Volterra Cox-Ingersoll-Ross (CIR) equation} or 
as a \textit{Multivariate fake stationary Volterra CIR equation} if, in addition, equation~\eqref{eq:CondMean_} holds. The function \(\varsigma\) can be interpreted as a control acting on the volatility process~\eqref{VolSqrt_}, thereby ensuring that its second moment remains constant over time (see, e.g., Figure~\ref{fig:_variance}).
\vspace{-.3cm}
\subsection{Formulation of the stochastic Market model}
\noindent We  consider a financial market on $[0,T]$  on some filtered probability space $(\Omega,\cF,\F:=(\cF_t)_{t \geq 0},\P)$ with \(d+1\) securities, consisting of a  bond and \(d\) stocks. The non--risky asset  $S^0$ satisfies the (stochastic) ordinary differential equation: 
\vspace{-.3cm}
\begin{align*}
	dS^0_t = S^0_t r(t) dt,
\end{align*}
with a time-dependent deterministic  short risk-free rate $r:\R_+ \to \R$, and  $d$ risky assets (stock or index) whose return vector process $(S_t)_{t \ge 0} = (S_{t}^1, \ldots, S_{t}^d)_{t \ge 0}$ is defined via the dynamics given by the vector-stochastic differential equation (SDE):
\vspace{-.1cm}
\begin{align}
	\label{eq:stocks}
	dS_t = \diag(S_t) \big[ \big( r(t) {\bold{1}_d} + \sigma_t \lambda_t  \big)dt + \sigma_t dB_t \big],
\end{align}
driven by a $d$-dimensional Brownian motion $B$, with a $d\times d$-matrix valued  continuous stochastic volatility process $\sigma$ whose dynamics is driven by~\eqref{VolSqrt_}
and a $\R^d$-valued continuous stochastic process $\lambda$, 
called {\it market price of risk}.  Here ${\bold{1}_d}$ denotes the vector in $\R^d$ with all components equal to $1$ and the correlation structure of $W$ with $B$, denoted \(\Sigma:=[\Sigma_1,\cdots,\Sigma_d]\) is given by
\vspace{-.1cm}
{\begin{align}\label{eq:correstructureheston}
		W^i = \rho_i B^i + \sqrt{1-\rho_i^2} B^{\perp,i} =  \Sigma_i^\top  B_t + \sqrt{1-\Sigma_i^\T \Sigma_i} B^{\perp,i}_t, \quad i=1,\ldots,d,
	\end{align}
	for some $(\rho_1,\ldots,\rho_d)\in[-1,1]^d$},
where $(0,\ldots,\rho_i,\ldots,0)^\T:=\Sigma_i \in \R^{d}$ is such that $\Sigma_i^\T \Sigma_i\leq1$,  and $B^{\perp}$ $=$ $(B^{\perp,1},\ldots,B^{\perp,d})^\T$ is an $d$--dimensional Brownian motion independent of $B$. The correlation $\rho_i $ between stock price \(S^i\) and variance \(V^i\) is assumed constant.
Note that $d\langle W^i \rangle_t = dt$  but $W^i$ and $W^j$ can be correlated, hence $W$ is not necessarily a Brownian motion.

\medskip
\noindent Observe that 
processes $\lambda$ and $\sigma$ are $\F$-adapted, possibly unbounded,  but not necessarily adapted to the filtration generated by $W$.  We point out that $\F$ may be strictly larger than the augmented filtration generated by $B$ and $B^{\perp}$ as we deal with weak solutions to stochastic Volterra equations. 

\medskip
\noindent
We assume that  $\sigma$ in~\eqref{eq:stocks} is given by $\sigma = \sigma(V) = \sqrt{\diag(V)}$, where the $\R^d_+$--valued scaled process $V$  is defined in~\eqref{VolSqrt_} with \(\varsigma = \varsigma_{D,c}\) and Equation~\eqref{eq:CondMean_} holds true. We will be chiefly interested in the case where \(\lambda_t\) is linear
in \(\sigma_t\). More specifically, the the market price of risk (risk premium) is assumed to be in the form
{$\lambda$ $=$ $\big(\theta_1\sqrt{V^1},\ldots,\theta_d \sqrt{V^d}\big)^\top$}, for some constant {$\theta_i \geq 0$},  so that the dynamics  for the stock prices \eqref{eq:stocks} reads following \cite{Kraft2005,abi2019affine}
\begin{align}
	\label{eq:hestonS}
	dS^i_t = S^i_t \left( r(t)  + \theta_i V^i_t   \right) dt + S^i_t \sqrt{V^i_t} dB^i_t, \quad i=1,\ldots, d.
\end{align}
Since \(S\) is fully determined by \(V\), the existence of $S$ readily follows from that of $V$. In particular, weak existence of H\"older pathwise continuous solution $V$ of~\eqref{VolSqrt_} such that~\eqref{eq:L^p-supBound} holds is established  under suitable assumptions on the kernel $K$ and specifications $g_0$ as shown in the following remark. 

\medskip
\noindent We state the following existence and uniqueness result from \cite{EGnabeyeuR2025} which is extended to the  multi-dimensional setting.
\begin{Theorem}(\cite[Theorem 3.1 and Remark on Theorem 3.2]{EGnabeyeuR2025}). Under Assumption \ref{assump:kernelVolterra}, the stochastic Volterra equation (\ref{eq:hestonS})-(\ref{VolSqrt_}) has a unique in law continuous $\R^{d}_+ \times \R^{d}_+$-valued weak solution $(S,V)$ for any initial condition $(S_0, V_0) \in \R^{d}_+ \times \R^{d}_+$ defined on some filtered probability space $(\Omega,\mathcal F, (\mathcal F)_{t\geq 0}, \mathbb P)$ such that 
	\vspace{-.4cm}
	\begin{align}\label{eq:moments V1}
		\sup_{t\leq T} \E\left[ \|V_t\|^p \right] < \infty, \quad p > 0.
	\end{align}
\end{Theorem}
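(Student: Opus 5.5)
The plan is to exploit the diagonal structure. Since $K$, $\nu$, $\varsigma$ and $D$ are all diagonal, the system \eqref{VolSqrt_} decouples into $d$ scalar time-inhomogeneous Volterra square-root equations
\[
V^i_t=\varphi^i(t)V^i_0+\int_0^t K_i(t-s)\big(\mu^i(s)-\lambda_iV^i_s\big)ds+\int_0^t K_i(t-s)\nu_i\varsigma^i(s)\sqrt{V^i_s}\,dW^i_s ,
\]
each driven by the scalar Brownian motion $W^i=\rho_iB^i+\sqrt{1-\rho_i^2}B^{\perp,i}$. The only coupling between components comes from the joint law of $(W^1,\dots,W^d)$. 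The stock equations \eqref{eq:hestonS} are then explicit functionals of $V$:
\[
S^i_t=S^i_0\exp\Big(\int_0^t\big(r(s)+(\theta_i-\tfrac12)V^i_s\big)ds+\int_0^t\sqrt{V^i_s}\,dB^i_s\Big).
\]
So it suffices to construct $V$ jointly with a $2d$-dimensional Brownian motion $(B,B^\perp)$ and to prove uniqueness in law of $(V,B,B^\perp)$. Uniqueness in law of $(S,V)$ then follows by taking the measurable image.

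\emph{Existence.} For each $i$, I invoke \cite[Theorem 3.1]{EGnabeyeuR2025} for the scalar equation, which yields a continuous nonnegative weak solution under Assumption~\ref{assump:kernelVolterra} and the boundedness of $\varsigma^i$. To get the components on a common space with the prescribed correlation, I would not glue the scalar solutions together. Instead I would redo the standard construction for the vector equation directly: approximate by Euler-type (or Hawkes scaling-limit) schemes driven by a single $2d$-dimensional Brownian motion. The next step is tightness in $C([0,T],\R^d\times\R^{2d})$, which follows from the uniform moment bounds \eqref{eq:L^p-supBound} and the Kolmogorov increments supplied by $(\widehat{\cal K}^{cont}_{\widehat\theta})$, $({\cal K}^{cont}_\theta)$ and Assumption~\ref{assump:kernelVolterra}(ii). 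I would then identify the limit via the martingale problem, as in \cite{abi2019affine}. Nonnegativity is inherited componentwise from the scalar argument, which only uses complete monotonicity of $K_i$ and $\mu^i\ge0$. The moment bound \eqref{eq:moments V1} follows from the linear growth of $b,\sigma$ together with \eqref{eq:L^p-supBound}, by a Gronwall-type argument for Volterra inequalities applied to $\E\|V_t\|^p$.

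\emph{Uniqueness in law.} This is the main obstacle, because the square root is not Lipschitz and pathwise uniqueness is unavailable for singular kernels. I would use the affine structure. For $u\in(\C_-)^d$ and $w\in\C^d$, I would show that the joint Fourier–Laplace transform has the form $\E\exp(u^\top\log S_T+\int w^\top V)$. By a martingale argument on the process $\exp(Y_t)$, with $Y_t$ built from the conditional expectation of the forward variance, this transform is exponential-affine in $V_0$ and in the forward curve. The coefficients are given by a time-inhomogeneous Riccati–Volterra system, decoupled across $i$, in which $\varsigma^i(s)^2$ multiplies the quadratic term. Existence of a global solution to that Riccati system follows componentwise from the scalar result in \cite{EGnabeyeuR2025}, since $\varsigma^i$ is bounded and the system is diagonal. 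Because the transform determines the finite-dimensional laws, and the same argument applies conditionally at finitely many times, uniqueness in law follows. The delicate point is verifying that the candidate local martingale is a true martingale. I would handle this by localization, using the exponential moment control provided by $\Re$ of the Riccati solution being nonpositive, together with \eqref{eq:moments V1}.
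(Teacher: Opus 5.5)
\textbf{Gap: nonnegativity of $V$.} You settle this in one sentence, yet everything else rests on it. Without $V\ge0$ the coefficient $\sqrt{V^i}$ is undefined, and your uniqueness argument uses $V\ge0$ to turn $\Re\psi\le0$ into control of $|\exp(Y_t)|$.

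Positivity is not ``inherited'' from an Euler-type scheme, since such schemes leave $\R_+$. Your tightness argument gives at best a solution of the equation with $\sqrt{(V^i)^+}$, and you then need an invariance theorem for the limit, as in \cite{abi2019affine}. In that theorem, complete monotonicity only delivers the kernel conditions: $K_i$ nonnegative and non-increasing, with a resolvent of the first kind $r_i\ge0$ such that $s\mapsto r_i([s,s+t])$ is non-increasing. One also needs the \emph{effective} drift at $0$ to be nonnegative. This is an admissibility condition on the input curve $g_0^i(t)=\varphi^i(t)V_0^i+(K_i*\mu^i)(t)$, for instance $g_0^i=V_0^i+K_i*\beta^i$ with $\beta^i\ge0$. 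The condition $\mu^i\ge0$, which is not even among the hypotheses, does not give this once $\varphi^i$ varies in time.

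Here is a concrete failure. Take $K_i\equiv1$, which Assumption~\ref{assump:kernelVolterra} allows. The $i$-th equation is then the CIR equation $dV^i_t=\big(\mu^i(t)+\dot\varphi^i(t)V_0^i-\lambda_iV^i_t\big)dt+\nu_i\varsigma^i(t)\sqrt{V^i_t}\,dW^i_t$. Its drift at $0$ is negative wherever $\dot\varphi^i(t)V_0^i<-\mu^i(t)$, and the solution then leaves $\R_+$ with positive probability. This is not exotic: under \eqref{eq:CondMean_}, for any $K_i$, one has $g_0^i=V_0^i+K_i*\beta^i$ with $\beta^i=\mu^i\big(1-V_0^i/v_\infty^i\big)+\lambda_iV_0^i$. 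This is negative for large $V_0^i$ wherever $\mu^i>\lambda_iv_\infty^i$.

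So you must either impose and verify such an admissibility condition, or build the solution by a construction that is positive by design. The paper does the latter implicitly. It gives no argument of its own; it imports the scalar result of \cite{EGnabeyeuR2025}, where the solution is a scaling limit of time-modulated nearly unstable Hawkes processes and hence nonnegative, and asserts the diagonal multivariate extension.

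\textbf{Uniqueness.} Your mechanism is the right one: exponential-affine transform, decoupled time-inhomogeneous Riccati--Volterra system, and true martingale property via localization and moment bounds, as in \cite{abi2019affine}. Two points need repair.

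First, the domain of $u$. For real $u_i<0$, the coefficient of $V^i$ generated by $u_i\log S^i$, namely $u_i(\theta_i-\tfrac12)+\tfrac12u_i^2$, can be positive, so $\Re\psi\le0$ fails on $(\mathbb{C}_-)^d$. Restrict to $\Re u_i=0$ and $\Re w_i\le0$, which is all uniqueness requires.

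Second, a constant $w$ only identifies the law of $(\log S_T,\int_0^TV_s\,ds)$. Conditioning at intermediate times produces exponentials of linear functionals of the whole forward curve $g_{t_k}(\cdot)$, not of $V_{t_k}$. You therefore need time-dependent or measure-valued weights in the transform anyway, as in the formula from \cite{Gnabeyeu2026a} used in the paper. With them, the finite-dimensional distributions follow directly.
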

\vspace{-.2cm}
\noindent From now on, we set \( g_0(t):= \varphi(t) V_0 + \int_0^t K(t-s)\mu(s) ds \)  the initial input curve and \(Z_t := \int_{0}^{t}D V_s ds + \nu\varsigma(s) \sqrt{\diag(V_s)}dW_s,\) for all \( t \geq 0\) so that Equation~\eqref{VolSqrt_} reads \(\P \otimes dt-\)a.e.
\vspace{-.2cm}
\begin{equation} 
	\label{VolSqrt2}
	\begin{aligned}
		V_t = g_0(t) + \int_0^t K(t-s) D V_s ds  + \int_0^t K(t-s) \nu \varsigma(s)\sqrt{\diag(V_s)}dW_s =  g_0(t) + \int_0^t K(t-s) dZ_s.
	\end{aligned}
	\vspace{-.1cm}
\end{equation}
Finally, we consider the $\R^d$-valued process for \( s\geq t,\) 
\vspace{-.2cm} 
\begin{align}\label{eq:processg}
	g_t(s)= g_0(s) + \int_0^t K(s-u) \big(D V_u du + \nu \varsigma(s) \sqrt{\diag(V_u)}dW_u\big) =  g_0(s) + \int_0^t K(s-u) dZ_u.
\end{align}
One notes that for each, $0\leq s\leq T$, $(g_t(s))_{t\leq s}$ is the adjusted forward process 
\vspace{-.2cm}
\begin{equation}\label{eq:Condprocessg}
	g_t(s) = \; \mathbb E^\P\Big[  V_s - \int_t^s K(s-u)DV_udu \Mid \cF_t\Big],\quad \P \text{- a.s., for a.e.}\; s > t.
	\vspace{-.1cm}
\end{equation}
\noindent This adjusted forward process is commonly used (see, e.g.,~\cite{AbiJaberMillerPham2021}) to elucidate the affine structure of affine Volterra processes with continuous trajectories.\\
More generally, in what follows, under a new probability measure \(\widetilde{\mathbb{P}}\sim \mathbb{P} \), supporting the \(d-\) dimensional Wiener Process \(\widetilde{W}\), we will denotes by 
\(\widetilde{g}_t(s)\), the conditional $\widetilde{\mathbb{P}}$-expected ajusted variance process, namely,
\begin{equation}\label{eq:Condprocessg_}
	\widetilde{g}_t(s):= \mathbb E^{\widetilde{\P}}\Big[  V_s - \int_t^s K(s-u)H_u V_udu | \cF_t\Big],\quad \widetilde{\mathbb{P}} \text{- a.s., for a.e.}\; s > t.
	\vspace{-.1cm}
\end{equation}
 where the dynamics of \(V\) under \(\widetilde{\mathbb{P}}\) reads
\begin{equation} 
	\label{VolSqrt2_}
	\begin{aligned}
		V_t =  g_0(t) + \int_0^t K(t-s) d\widetilde{Z}_s,\quad \text{with} \quad \widetilde{Z}_t := \int_{0}^{t}H_s V_s ds + \nu\varsigma(s) \sqrt{\diag(V_s)}d\widetilde{W}_s.
	\end{aligned}
	\vspace{-.1cm}
\end{equation}
\noindent The process in~\eqref{VolSqrt2} is non-Markovian and non-semimartingale in general. Note that our model (\ref{eq:hestonS})-(\ref{eq:correstructureheston})-(\ref{VolSqrt_}) features correlation between the stocks and between a stock and its volatility. Moreover, the methodology developed in this paper, and hence the results obtained, remain valid if the matrix \( D \) in \eqref{VolSqrt_} is not assumed to be diagonal, but only satisfies
\vspace{-.2cm}
\[
D \in \mathbb{R}^{d \times d}, 
\qquad 
D_{ij} \ge 0 \ \text{for } i \neq j.
\]
\vspace{-.1cm}
This also provides an extension to the inhomogeneous setting of the models considered in~\cite{abi2019affine,AbiJaberMillerPham2021,AichingerDesmettre2021}. Consequently, our main results (Theorems~\ref{Thm:powerUtilityGeneral} and~\ref{Thm:ExpoUtilityGeneral}) extends \cite[Theorem 3.3 and Theorem 3.6]{HanWong2020b}, \cite[Theorem 3.2]{AichingerDesmettre2021} to the multivariate, \textit{time-dependent} diffusion coefficient case. 

\vspace{-.4cm}
\section{ Merton's portfolio problem: Utility maximisation}\label{Sec:SolMerton} 
\vspace{-.2cm}
\noindent {$\rhd$ {\em Preliminaries and Problem formulation}:}  As we deal with weak solutions to stochastic Volterra equations (\ref{eq:hestonS})-(\ref{VolSqrt_}), Brownian motion is also a part of the solution. However, expected utility only depends on the expectation of the wealth process. In the sequel, we fix a version of the solution $(S, V, B, B^\T)$ to  (\ref{eq:hestonS})-(\ref{VolSqrt_}) as other solutions have the same law.

\smallskip
\noindent
We consider the classical problem of maximizing the expected utility of terminal wealth. 
A portfolio strategy $\alpha_t = (\alpha_{t,1}, \ldots, \alpha_{t,d})^\top$ is an $\mathbb{R}^d$-valued,  $\mathbb{F}$-progressively measurable process, where $\alpha_{t,k}$ represents the proportion of wealth invested in asset $k$ at time $t$. Under a fixed portfolio strategy, the corresponding wealth process $(X_t^\alpha)_{t \ge 0}$ depend on \(V\) in (\ref{VolSqrt_}) and has a certain dynamic to be specified latter.
By \(\mathcal{A}\) we denote the set of admissible portfolio or investment strategies i.e. the set of all $\mathbb{F}$-progressively measurable processes $ (\alpha_t)_{t \in [0,T]}$ valued in the Polish space  \(\R^d\).

\medskip
\noindent To ease notation burden, whenever the context is clear, we simply write $X$, instead of $X^{\alpha}$, as the wealth process under $\alpha \in \cA$ with initial condition $X_0 = x_0 > 0$ and $V_0> 0$.
\vspace{-.1cm}
\begin{Definition}
	\begin{enumerate}
		\item A utility function is a strictly increasing and strictly concave function \(U : E \subset \R \to \mathbb{R} \cup \{-\infty\},\)
		which is continuously differentiable on $E$.
		
		\noindent In the following, $U$ denotes a general utility function. Later, we will focus on power utility functions of the form \(U(x) = \frac{x^{\gamma}}{\gamma}, 
		\; \text{for } \gamma \in \mathbb{R}_+ \setminus \{0,1\},\)
		or alternatively on exponential utility functions of the form \(U(x) = -\frac1\gamma e^{-\gamma x}, 
		\; \text{for } \gamma > 0.\) \(\gamma\) is typically called the \textit{risk aversion parameter}.
		
		\item An admissible strategy $\alpha \in \mathcal{A}$ is said to be optimal (for terminal wealth) if it maximizes \(\alpha \longmapsto \mathbb{E}\big[ U(X_T^\alpha) \big]\)
		over all admissible strategies in $\mathcal{A}$. That is the portfolio strategy \(\alpha^*\) for which the supremum is attained 
	\end{enumerate}
\end{Definition}
\noindent
The investor's goal in the Merton problem is to find an optimal strategy so as to maximize the expected utility of terminal wealth. Specifically, given the utility function $U$ on $(0,\infty)$ and starting from an initial capital $x_0>0$,
the objective of the agent is
\vspace{-.2cm} 
\begin{equation}\label{eq:value0}
	\mathcal{V}(x_0,V_0):= \sup_{\alpha(\cdot) \in \mathcal A}
	\mathbb E_{x_0, V_0}\!\left[ U\!\left(X_T^\alpha\right) \right],  \;\text{given} \; x_0 \;\text{and} \;V_0.
\end{equation}
with $X^\alpha$ the wealth
controlled by $\alpha \in \mathcal A$, starting from $x_0$ at time $0$
and $\mathcal A$ the subset of controls $\alpha \in \mathcal A$ such that the family \(\left\{ U\!\left(X_\tau^\alpha\right) : \tau \in [0,T] \right\}\)
is uniformly integrable.
The preferences of the agent are thus described by the utility function $U$. 
Here $\mathbb{E}_{x_0, V_0}$ denotes expectation under the conditional distribution given $X_0 = x_0$ and $V_0$, and the supremum is taken over all admissible portfolio strategies. 
Mathematically, denoting by \(\mathcal S_t := \big(X_t^\alpha, g_t(\cdot)\big)\) the state variable, we aim at identifying the optimal value associated with the utility maximization problem defined over a finite horizon $T$ by 
\vspace{-.15cm}
\begin{equation}\label{eq:valueFunction}
	\mathcal{V}_t(\mathcal S_t ):= \operatorname*{ess\,sup}_{\alpha \in \mathcal A_t}
	\mathbb E\!\,\big[ U\!\big(X_T^{\alpha}\big) \mid \mathcal S_t  \big],\qquad t \in [0,T].
	\vspace{-.1cm}
\end{equation}
and the optimal strategy $\alpha^\ast$, given the preference described by the utility function $U(\cdot)$. 
This is a standard stochastic control problem. The set $\mathcal A_t$ is the class of all admissible strategies from t 
with zero being an absorbing state for $X_t^\alpha$ (bankruptcy).

\medskip
\noindent The  $\mathbb F$-adapted process $\{\mathcal{V}_t,\, 0 \le t \le T\}$
is a supermartingale.
Moreover, there exists an optimal control $\alpha^* \in \mathcal A$ for~\eqref{eq:value0}
if and only if the martingale property holds, that is,
the process $\{\mathcal{V}^*_t,\, 0 \le t \le T\}$ for \(\alpha^*\) is a martingale.
The idea is that an admissible control (or startegy) is optimal
 if the associated value process is a martingale and for any other admissible control, it is a supermartingale. That is the classic martingale \textit{optimality principle}, see, e.g., \cite{HuImkellerMueller2005}, \citet[Section 6.6.1]{pham2009continuous} or \cite{JeanblancEtAl2012}. 
\vspace{-.2cm} 
\begin{Definition}[Martingale optimality principle]\label{def:Martopt}
	The Problem (\ref{eq:value0}) can be solved by constructing a family of processes $\{ J^\alpha_t \}_{ t \in [0, T]}$, $\alpha \in \cA$, satisfying the conditions:
	\begin{enumerate}
		\item $J^\alpha_T = U(X_T)$ for all $\alpha \in \cA$; 
		\item $J^\alpha_0$ is a constant, independent of  $\alpha \in \cA$;
		\item $J^\alpha_t$ is a supermartingale for all $\alpha \in \cA$, and there exists  $\alpha^* \in \cA$ such that $J^{\alpha^*}$ is a martingale.
	\end{enumerate}
\end{Definition}
\noindent A family of processes with the above properties can now be used to compare the expected utilities of an arbitrary strategy $\alpha \in \cA$ and the strategy $\alpha^*$:
\vspace{-.2cm} 
\begin{equation*}
	\E[ U(X_T^\alpha) ] = \E[ J^\alpha_T ] \leq J^\alpha_0 = J^{\alpha^*}_0 = \E[ J^{\alpha^*}_T]  = \E[ U(X^{\alpha^*}_T)]=\mathcal{V}(x_0,V_0).
\end{equation*}
where $X^{\alpha^*}$ is the wealth process under $\alpha^*$. Thus the strategy $\alpha^*$ is indeed our desired optimal portfolio strategy.

\medskip
\noindent As stated before,  Problem (\ref{eq:value0}) seen as an optimization problem with state process \(X^{\alpha}\) is non-Markovian and the standard stochastic control approach cannot be applied.
Heuristically speaking, the non-Markovian and non-semimartingale
characteristics of the multivariate fake stationary Volterra Heston model (\ref{eq:hestonS})-(\ref{VolSqrt_}) are overcome in the degenerate correlation case by applying the martingale optimality principle
and constructing an ansatz, which is inspired by the martingale distortion transformation and the exponential-affine representation (see e.g. \cite[Theorem A.4.]{Gnabeyeu2026a}) in term of an auxiliary process denoted \(\Gamma\equiv\Gamma\big(g_t(\cdot)\big)\).
In the general correlation case, we provide a verification argument avoiding restrictions on the correlation structure linked to the martingale distortion transformation.

\smallskip
\noindent In fact, \cite{fouque2018aoptimal} showed that if the Sharpe-ratio $\lambda$ in~\eqref{eq:stocks} is bounded and has bounded derivative, then the value process $\mathcal{V}_t$ can be expressed as $\mathcal{V}_t=J_t(X_t^\alpha,g_t(\cdot)\big))$, where 
\begin{equation}\label{eq:ansatz}
	J_t(X_t^{\alpha},g_t(\cdot)\big)):=F(t,X_t^{\alpha})\Gamma(g_t(\cdot)\big))
\end{equation}
for the power utility case (in which \(F(t,X_t^{\alpha})=U(X_t^{\alpha})=\frac{(X_t^{\alpha})^{\gamma}}{\gamma}\)) even if the volatility process $V_t$ is non-Markovian.
This approach is called the martingale distortion transformation and was first introduced in the seminal paper of \cite{zariphopoulou2001solution} and later transferred to a non-Markovian setting with H\"older-type inequalities in \cite{Teh04} for both power utility  and exponential utility case (in which \(F(t,X_t^{\alpha})=- \frac{1}{\gamma} \exp\Big( - \gamma e^{\int^T_t r(s)ds} X_t^\alpha\Big)\)). The extension to the multi-asset case is straight forward in the case of a bounded risk premium and degenerate correlation structure(cf. \cite{fouque2018aoptimal}, Remark 2.5.).

\medskip
\noindent We will provide concrete specifications of the process $\Gamma$ depending on the problem at hand.
The semi-closed form exponential--affine representation~\cite[Theorem A.4.]{Gnabeyeu2026a} of the functional $\Gamma$ can be expressed in terms of the continuous solution $\psi$
of an associated time-inhomogeneous Riccati-type Volterra equation, which can be solved by well-known efficient numerical methods. 
\begin{note}
	It is worth noting that, in contrast to existing literature on utility maximization under Volterra stochastic environments (see, e.g., \cite{HanWong2020b,AichingerDesmettre2021}), our approach presents the stochastic factor, or auxiliary process \(\Gamma\), as the solution to a BSDE. Furthermore, its exponential representation mentioned earlier is expressed in terms of the adjusted forward process, as defined in equations~\eqref{eq:processg}--\eqref{eq:Condprocessg}, rather than the forward process used in these works.
\end{note}

\noindent We will offer explicit solutions to the optimal portfolio policies that depend on the function  $\psi$, solution of the above-mentionned time-dependent multivariate Riccati-Volterra equation.  
Let $\Lambda$ be defined as 
\begin{equation}\label{eq:Lambda}
	\Lambda_t^i =  \nu_i\varsigma^i(t) \psi^i(T-t) \sqrt{V^i_t}, \quad i=1,\ldots,d, \quad 0 \leq t \leq T, 
\end{equation}
We will work under the following assumption,
\vspace{-.2cm} 
\begin{assumption}\label{assm:gen}
	Assume that there exists a solution
	$\psi \in C([0,T],\mathbb{R}^d)$ to the above-mentioned inhomogeneous Riccati--Volterra equation satisfying the below appropriate
	boundedness condition i.e. such that 
	\begin{equation}\label{eq:condtheta}
		\max_{1 \leq i \leq d} \sup_{t \in [0,T]} \left( \theta_i^2 + \nu_i^2 \varsigma^i(t)^2 \psi^i(T-t)^2 \right) \leq \frac{a}{a(p)},
	\end{equation}
	holds for a sufficient large $p > 1$, where the constant $a(p)$ is given by 
	{  \begin{equation}a(p)=\max \Big[p \left(2 + |\Sigma| \right),   {2 (8p^2 {- 2p}) \left( 1  + |{\Sigma}|^2  \right)}, { p \left( 1  + |{\Sigma}|^2  \right)}  \Big]. \label{eq:constap}
	\end{equation}}
	and the constant $a>0$ is such that $\E\left[\exp\big(a\int_0^T \sum_{i=1}^d V^i_s ds\big)\right] < \infty$.
\end{assumption}
\noindent {\bf Remark on Assumption~\ref{assm:gen}:} Note that if Assumption~\ref{assm:gen} hold, then \begin{equation}
	\label{eq:assumption_novikov}
	\E \Big[ \exp\Big( a(p)\int_0^T \big(  |\lambda_s|^2 + \left|\Lambda_s\right|^2 \big)ds \Big) \Big] \;< \;  \infty,
\end{equation}
holds for some $p > 1$ and a constant $a(p)$ given by~\eqref{eq:constap}.

\noindent In fact, under Assumption~\ref{assm:gen}, we will have 
\vspace{-.4cm} 
\begin{equation}
	a(p)\left( |\lambda_s|^2 + \left|\Lambda_s\right|^2 \right) \; = \; a(p) \sum_{i=1}^d V_s^i \left( \theta_i^2 + \nu_i^2 \varsigma^i(s)^2\psi^i(T-s)^2 \right) \; \leq \;  a \sum_{i=1}^d V_s^i,
	\vspace{-.1cm}
\end{equation} 
which implies that $\E \left[ \exp\left(  a(p) \int_0^T \left( |\lambda_s|^2 + \left|\Lambda_s\right|^2  \right)ds\right) \right]< \infty$.


\medskip
\noindent{\bf Remark:} 
Condition \eqref{eq:condtheta} concerns the risk premium constants  $(\theta_1,\ldots, \theta_d)$. For a large enough constant $a>0$, from~\cite[Theorem A.4.]{Gnabeyeu2026a} ( with \(\mathcal{M} \ni m(\dd s) := a{\bold{1}_d}\,\delta_0(\dd s) \)),   a sufficient condition ensuring 
$\E\big[\exp\big(a\int_0^T \sum_{i=1}^d V^i_s ds\big)\big]<\infty$ is the existence of a continuous solution $\tilde{\psi}$  on $[0,T]$ to the inhomogeneous Riccati--Volterra equation 
\vspace{-.2cm} 
\begin{equation}\label{eq:CondExpoRepreA}
	\tilde{\psi}^i(t) = \; \int_0^t K_i(t-s) \Big(a+ \big(D^\T\tilde{\psi}(s)\big)_i + \frac{\nu_i^2}{2}(\varsigma^i(T-s)\tilde{\psi}^i(s)) ^2 \Big) ds, \;\; i=1,\ldots,d, \;\; 0 \leq t \leq T.
\end{equation} 
\noindent 
Power, exponential and logarithmic utility functions are widely adopted in the literature and display distinct risk-aversion properties. Specifically, the power utility function exhibits constant relative risk aversion (CRRA), 
whereas the exponential utility function is characterized by constant absolute risk aversion (CARA). Optimal strategies corresponding to these utility functions are presented in the sequel, within the framework of the model introduced in Section~\ref{Sect:affine}.
\vspace{-.2cm}
\subsection{Optimal strategy for the power utility maximization problem}\label{subsect:Power}
\noindent Here, we denote by  $\pi_t$ the proportion of wealth invested in the risky assets \(S\).
An agent invests at any time $t$ the proportion $\pi_t$ of his wealth $X^\pi_t$ in the stocks \(S\)
of price's vector $S_t$, and the remaining proportion $1-\pi_t^\top {\bold{1}_d}$ in a bond of price $S_t^0$
with interest rate $r(t)$.  The notation $X^\pi_t$ emphasizes the dependence of the wealth on the strategy $\pi = (\pi_t)_{t \ge 0}$.
The portfolio strategy $\pi_t=(\pi_{t}^1,\dots, \pi_{t}^d)$ is an $(\R^d)^*$ valued, progressively measurable process, where $\pi_{t}^{k}$ represents the proportion of wealth invested into stock $k$ at time $t$. 
Assuming the model~\eqref{eq:stocks}--~\eqref{eq:hestonS} for $S_t$, the dynamics of the controlled wealth process is given by
\vspace{-.2cm} 
\begin{align*}
	\mathrm d X^\pi_t
	&= X^\pi_t \bigl(  \pi_t^\top \big( r(t) {\bold{1}_d} + \sigma(V_t) \lambda_t  \big) + (1 - \pi_t^\top {\bold{1}_d})r(t) \bigr)\,\mathrm dt + X^\pi_t \pi_t^\top\sigma(V_t)  \,\mathrm d B_t \\
	&= X^\pi_t \bigl(r(t) + \pi_t^\top  \sigma(V_t) \lambda_t\bigr)\,\mathrm dt + X^\pi_t \pi_t^\top   \sigma(V_t) \,\mathrm d B_t=X_t^{\pi}(r(t)+\pi_t^{\top}{\diag(V_t)}{\theta})dt+X_t\pi_t^{\top}\sqrt{\diag(V_t)}dB_{t}.
\end{align*}
where ${\theta}=(\theta_1,\dots,\theta_d)^\top$.
The investor faces the portfolio constraint that, at any time $t$,
$\pi_t$ takes values in a closed convex subset  $\mathcal{A}\subset \mathbb R^d$ denoting the set of admissible portfolio strategies. 

\medskip
\noindent From now, we let $\alpha_t := \sigma(V_t)^\top \pi_t$ be the investment strategy. Then, under a fixed portfolio strategy, the wealth process $X^\alpha_t$, controlled by $\alpha$ is governed by:
\vspace{-.2cm} 
\begin{equation}\label{Eq:wealth}
	d X^\alpha_t = \big(r(t) + \alpha_t^\top \lambda_t \big)  X^\alpha_t dt + \alpha_t^\top X^\alpha_t dB_{t}, \; X_0 = x_0 > 0.
	\vspace{-.1cm}
\end{equation}
By solving the linear SDE \eqref{Eq:wealth}, the wealth process admits the explicit representation
\begin{equation}\label{eq:wealthPowerSol}
	X_t^\alpha
	=
	x_0 \exp\!\left(
	\int_0^t \Big(r(s) + \alpha_s^\top \lambda_s - \tfrac12 \left|\alpha_s\right|^2\Big)\,ds
	+
	\int_0^t \alpha_s^\top \, dB_{s}
	\right), \quad x_0\geq0.
\end{equation}
By a solution to~\eqref{Eq:wealth}, we mean an $\F$-adapted process $X^{\alpha}$ satisfying~\eqref{Eq:wealth} on $[0,T]$  with $\P$-a.s. continuous sample paths and such that
\vspace{-.2cm} 
\begin{align}
	\label{eq:estimateXPower}
	\E\big[\sup_{t\leq T} |X^{\alpha}_t|^p \big] &< \;  \infty\quad \text{for some }\quad p>1.
\end{align}
The conditions under which we consider a strategy to be admissible is specified in the below definition. 
\vspace{-.2cm}
\begin{Definition}\label{Def:adm}
	In the setting described above, we say that an investment strategy $\alpha(\cdot)$ is admissible if
	\begin{enumerate}
		\item[$(a)$] The SDE (\ref{Eq:wealth}) for the wealth process $(X_t^{\alpha})$ has a unique solution in terms of $(S,V,B)$; with $\p$-$\as$ continuous paths and $X_t \geq 0$,  $\forall \; t \in [0, T]$, $\p$-$\as$;
		\item[$(b)$] $\E[\frac{1}{\gamma}(X_T^{\alpha})^{\gamma}]<\infty$ for all $0<\gamma<1$;
		\item[$(c)$] $\alpha(\cdot)$ is $\F$-adapted and $\int^t_0 \left|\alpha_s \right|^2ds < \infty$, $\forall \; t \in [0, T]$,  $\p$-$\as$.
	\end{enumerate}
	The set of all admissible investment strategies is denoted as $\cA$ and is naturally defined by
	\vspace{-.2cm} 
	$$\mathcal A  = \left\{ \alpha \in {L^{2, loc}_{\F}([0,T], \R^d)} \mbox{ such that \eqref{Eq:wealth} has a  solution satisfying } ~\eqref{eq:estimateXPower}. 
	\right\}$$
\end{Definition}
\noindent We want to solve the Merton power utility optimization problem, i.e. our aim is to find the value function $\mathcal{V}(x_0,V_0)$ for the CRRA utility function such that
\vspace{-.2cm} 
\begin{equation}\label{obj_power}
	\mathcal{V}(x_0,V_0)=	\sup_{ \alpha(\cdot) \in \cA }\E_{x_0,V_0}[\frac{1}{\gamma}(X_T^{\alpha})^{\gamma}];\quad 0<\gamma<1,
\end{equation}
where $\E_{x_0,V_0}$ is the conditional expectation given $x_0$ and $V_0$. The parameter $\gamma$ represents the relative
risk aversion of the investor. Smaller $\gamma$ correspond to higher risk aversion. 
\vspace{-.4cm}
\subsubsection{The degenerate correlation case}\label{degenerate}
\noindent We assume that the correlation in ~\eqref{eq:correstructureheston} is of the form $(\rho,\dots, \rho)$ for $\rho\in [-1,1]$. To construct the family of processes $\{ J^\alpha_t \}_{ t \in [0, T]}$, $\alpha \in \cA$, satisfying conditions~$(1)$-~$(2)$-~$(3)$ in Definition~\ref{def:Martopt}, we introduce the new probability measure  $\tilde{\mathbb{P}}$ defined via the Radon-Nikodym density at \(\mathcal{F}_T\) from
\vspace{-.2cm} 
\[
\frac{d\tilde{\mathbb{P}}}{d\mathbb{P}}|_{\mathcal{F}_t}= \mathcal E\Big( \frac{\gamma}{1-\gamma}\int_0^t \sum_{i=1}^d\theta_i \sqrt{V^i_s}dB^i_s \Big) =\operatorname{exp}\Big(\frac{\gamma}{1-\gamma}\int_0^t \lambda_s^\top dB_{s}-\frac{\gamma^2}{2(1-\gamma)^2}\int_0^t \left| \lambda_s \right|^2 ds\Big)
\]
where the stochastic exponential is a true martingale by Lemma~\ref{lm: extended_m_AJ_lemma}
together with the new standard brownian motion under $\tilde{\mathbb{P}}$,  \(\widetilde{B}_{t}=B_{t}-\frac{\gamma}{1-\gamma}\int_0^t\lambda_s ds\).
Define the new process \(\widetilde{W}\) by
\vspace{-.3cm} 
\[\widetilde{W}_t =  \Sigma  \widetilde{B}_t + \sqrt{I-\Sigma^\T \Sigma} B^{\perp}_t = W_{t}-\frac{\gamma}{1-\gamma}\int_0^t\Sigma\lambda_s ds,\]
\vspace{-.1cm} 
Notice that, by the 
Girsanov theorem, \(\widetilde{B}\) and \(\widetilde{W}\) are standard Wiener processes under the measure $\tilde{\mathbb{P}}$.
As in the one dimensional case, the Ansatz (\(\delta\) is called the distorsion coefficient)
\vspace{-.2cm}  
\begin{equation}\label{eq:ansatz1}
J_t^{\alpha}=\frac{(X_t^{\alpha})^{\gamma}}{\gamma}\Big(\E^{\tilde{\mathbb{P}}}\Big[\operatorname{exp}\Big(\int_t^T \frac{\gamma}{\delta} \Big(r_s+\frac{\left| \lambda_s \right|^2}{2(1-\gamma)}\Big)ds\Big)|\mathcal{F}_t\Big]\Big)^\delta=:\frac{(X_t^{\alpha})^{\gamma}}{\gamma}\Gamma_t \; \text{where} \; \delta:=\frac{1-\gamma}{1-\gamma+\gamma\rho^2}.
\end{equation}
is inspired by the martingale distortion transformation which was first introduced in the seminal paper \cite{zariphopoulou2001solution} and later transferred to a non-Markovian setting in \cite{Teh04,fouque2018aoptimal}.
Here we use the short notation $J_t^{\alpha}$ for $J_t(X_t^{\alpha},V_t)$.

\smallskip
\noindent The main properties of $\Gamma_t$ are summarized in Proposition \ref{prop:ExpoPower_riccati_1} below where \(\tilde{g}_t(s)\) denotes the conditional $\tilde{\mathbb{P}}$-expected ajusted variance process. 
\vspace{-.2cm} 
\begin{Proposition}
	\label{prop:ExpoPower_riccati_1}
	Assume that there exists  a solution $\psi \in C([0,T],\R^d)$ to the inhomogeneous Riccati-Volterra equation \eqref{eq:RiccatiPowerpsi1}-\eqref{eq:RiccatiPowerpsi2} below:
	\vspace{-.3cm} 
	\begin{align}
		\psi^i(t)&= \int_0^t K_i(t-s)\big(\frac{\gamma \theta_i^2}{2\delta(1-\gamma)}  + F_i(T-s,\psi(s))\big) ds,  \label{eq:RiccatiPowerpsi1} \\
		F_i(s,\psi) &=  \frac{\gamma}{1-\gamma} \rho \theta_i \nu_i \varsigma^i(s) \psi^i + (D^\top \psi)_i + \frac {\nu_i^2} 2  (\varsigma^i(s)\psi^i)^2, \quad i=1,\ldots,d, \label{eq:RiccatiPowerpsi2}
	\end{align} 
	\vspace{-.4cm} 
	Then, 
	\vspace{-.3cm} 
	\begin{equation}\label{eq:GammaPower}
		\Gamma_t = \exp\Big( \gamma\int_t^T r(s) ds +  \sum_{i=1}^d\int_t^T  \big(\frac{\gamma \theta_i^2}{2(1-\gamma)}  + \delta F_i(s,\psi(T-s))\big) \tilde{g}^i_t(s) ds \Big).
	\end{equation}
	where $\tilde{g}=(\tilde{g}^1,\ldots,\tilde{g}^d)^\T$ is the $\R^d$-valued process \((\tilde{g}_t(s))_{t\leq s}\) denoting the adjusted conditional $\tilde{\mathbb{P}}$-expected variance.
	Moreover $\left(\Gamma, \Lambda\right)\in\mathbb{S}^{p}_{\F}([0,T], \R_+^*) \times L^2_{\F}([0,T], \R^d) $  for some sufficiently large $ p >  1 \wedge \frac{1-\gamma}{\gamma} $ and satisfy the following Riccati BSDE
	\vspace{-.3cm} 
	\begin{align}
		\frac{d\Gamma_t}{\Gamma_t} &= \Big(- \gamma r(t)  -\frac{\gamma}{2(1-\gamma)}\sum_{i=1}^d \Big( \theta_i^2  + \delta^2 \rho^2 \nu_i^2 (\varsigma^i(t)\psi^i(T-t))^2 \Big) V^i_t\Big)dt  + \;  \delta \sum_{i=1}^d \psi^i(T-t)\nu_i \varsigma^i(t) \sqrt{V^i_t}d\widetilde{W}^i_t\notag \\
		& =  \big(- \gamma r(t)  - \frac{\gamma}{2(1-\gamma)} \left| \lambda_t\right|^2 -\frac{\gamma}{2(1-\gamma)} \delta^2\left| \Sigma \Lambda_t \right|^2\big)dt +\delta \Lambda_t^\top d\widetilde{W}_t, \quad \Gamma_T=1. \label{eq:gamma_Powerheston}
	\end{align}
\end{Proposition}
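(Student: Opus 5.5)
The plan is to reduce $\Gamma$ to a conditional Laplace functional of $V$ under $\tilde{\mathbb P}$ and then apply the exponential--affine representation \cite[Theorem A.4.]{Gnabeyeu2026a}.

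\textbf{Step 1: dynamics of $V$ under $\tilde{\mathbb P}$.} Since $\widetilde W_t = W_t - \frac{\gamma}{1-\gamma}\int_0^t \Sigma\lambda_s\,ds$ with $\Sigma\lambda_s = (\rho\theta_i\sqrt{V^i_s})_i$, substituting $dW^i = d\widetilde W^i + \frac{\gamma}{1-\gamma}\rho\theta_i\sqrt{V^i}\,ds$ into \eqref{VolSqrt2} gives
\[
V_t = g_0(t) + \int_0^t K(t-s)\,H_s V_s\,ds + \int_0^t K(t-s)\,\nu\varsigma(s)\sqrt{\diag(V_s)}\,d\widetilde W_s ,
\qquad H_s := D + \frac{\gamma\rho}{1-\gamma}\,\diag\big(\theta_i\nu_i\varsigma^i(s)\big).
\]
This is exactly the form \eqref{VolSqrt2_}, so $\tilde g_t(s)$ is the $\tilde{\mathbb P}$-adjusted forward process \eqref{eq:Condprocessg_}. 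The density is a true martingale by Lemma~\ref{lm: extended_m_AJ_lemma}, so Girsanov applies.

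\textbf{Step 2: exponential--affine formula.} Since $|\lambda_s|^2 = \sum_i \theta_i^2 V^i_s$, pull the deterministic $r$-part out and write
\[
\Gamma_t = e^{\gamma\int_t^T r(s)\,ds}\, M_t^{\delta},
\qquad
M_t := \E^{\tilde{\mathbb P}}\Big[\exp\Big(\int_t^T h^\top V_s\,ds\Big)\,\Big|\,\cF_t\Big],
\qquad
h_i := \frac{\gamma\theta_i^2}{2\delta(1-\gamma)} .
\]
Apply \cite[Theorem A.4.]{Gnabeyeu2026a} under $\tilde{\mathbb P}$ with $m(ds)=h\,ds$ and the time-dependent linear drift $H_s$. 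Its associated Riccati--Volterra equation is precisely \eqref{eq:RiccatiPowerpsi1}--\eqref{eq:RiccatiPowerpsi2}: the term $\frac{\gamma}{1-\gamma}\rho\theta_i\nu_i\varsigma^i\psi^i$ comes from the diagonal part of $H$, and $(D^\top\psi)_i$ from $D$. This yields
\[
M_t = \exp\Big(\sum_i \int_t^T \big(h_i + F_i(s,\psi(T-s))\big)\,\tilde g^i_t(s)\,ds\Big).
\]
Raising to the power $\delta$ gives \eqref{eq:GammaPower}, since $\delta h_i = \frac{\gamma\theta_i^2}{2(1-\gamma)}$.

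\textbf{Step 3: the Riccati BSDE.} The same theorem gives the martingale representation
\[
d\log M_t = -h^\top V_t\,dt - \tfrac12|\Lambda_t|^2\,dt + \Lambda_t^\top d\widetilde W_t ,
\]
with $\Lambda$ as in \eqref{eq:Lambda}, because $M$ is a positive $\tilde{\mathbb P}$-martingale times $\exp(-\int_0^t h^\top V)$. Itô's formula for $\Gamma = e^{\gamma\int_t^T r}\exp(\delta\log M)$ then gives
\[
\frac{d\Gamma_t}{\Gamma_t} = \Big(-\gamma r(t) - \delta h^\top V_t + \tfrac{\delta^2-\delta}{2}|\Lambda_t|^2\Big)dt + \delta\Lambda_t^\top d\widetilde W_t .
\]
The algebraic identity $\delta(\delta-1) = -\frac{\gamma\rho^2}{1-\gamma}\,\delta^2$ follows from the definition of $\delta$. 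Together with $|\Sigma\Lambda|^2 = \rho^2|\Lambda|^2$, it turns this drift into \eqref{eq:gamma_Powerheston}. Finally, $\Gamma_T = 1$ because the integration range in \eqref{eq:GammaPower} vanishes at $t=T$.

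\textbf{Step 4: integrability.}
\begin{itemize}
\item[(a)] $\Lambda\in L^2_\F$ follows from the continuity of $\psi$, the boundedness of $\varsigma$, and the moment bound \eqref{eq:moments V1}.
\item[(b)] For $\Gamma\in\mathbb S^p$, bound $\Gamma_t^p \le C\,\big(\E^{\tilde{\mathbb P}}[\exp(\int_0^T h^\top V)\,|\,\cF_t]\big)^{p\delta}$ and use Doob's inequality together with Hölder to return from $\tilde{\mathbb P}$ to $\mathbb P$. The exponential moments needed are then controlled by \eqref{eq:assumption_novikov} under Assumption~\ref{assm:gen}.
\end{itemize}

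\textbf{Main obstacle.} I expect the hardest part to be justifying \cite[Theorem A.4.]{Gnabeyeu2026a} with a time-inhomogeneous drift matrix $H_s$ and a time-dependent $\varsigma$ under the changed measure. In particular, one must show that the candidate exponential--affine process is a true $\tilde{\mathbb P}$-martingale, not only a local martingale. I would first try to establish this through the Novikov/Kazamaki-type criterion of Lemma~\ref{lm: extended_m_AJ_lemma}, using Assumption~\ref{assm:gen}. If that fails, I would fall back on a localization argument combined with the uniform integrability provided by \eqref{eq:assumption_novikov}.
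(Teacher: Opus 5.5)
Your proposal is correct and follows the paper's proof essentially step by step. You rewrite $\Gamma^{1/\delta}$ as a conditional Laplace functional of $V$ under $\tilde{\mathbb{P}}$ and apply the exponential--affine formula of Theorem A.4 with $m(ds)=\frac{\gamma}{2\delta(1-\gamma)}\theta\odot\theta\,ds$. You then raise the result to the power $\delta$ and obtain the Riccati BSDE by It\^o's formula, using $\delta(\delta-1)=-\frac{\gamma\rho^2}{1-\gamma}\delta^2$. The only cosmetic difference is that the paper gets the dynamics by differentiating the exponent $G_t$ directly via stochastic Fubini, with $d\tilde g_t(s)=K(s-t)\,d\tilde Z_t$ and $\tilde g_t(t)=V_t$. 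That is exactly the computation your Step~3 delegates to ``the same theorem''.
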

\noindent Note that Equation~\eqref{eq:gamma_Powerheston} is known in the litterature as a Ricatti backward stochastic differential equation (see e.g.~\cite[Theorem 3.1]{AbiJaberMillerPham2021} or ~\cite[Theorem 3.1]{ChiuWong2014}, upon setting \(\Tilde{\Lambda}_t= \Gamma_t\Lambda_t\) ).

\smallskip
\noindent Existence and uniqueness of the solution to ~\eqref{eq:RiccatiPowerpsi1}-\eqref{eq:RiccatiPowerpsi2} are established in \cite[Theorem A.2.]{Gnabeyeu2026a} based on the results of \cite{EGnabeyeuR2025} (see also the Remark on Proposition~\ref{prop:ExpoPower_riccati_2}).

\noindent By considering $\Gamma_t$, we overcome the non-Markovian and non-semimartingale difficulty in the variance process (\ref{VolSqrt_}).

\noindent {\bf Remark:} One should note that the Ansatz or the constructed family of stochastic processes \((J^\alpha)_{\alpha}\) in~\eqref{eq:ansatz1} can be rewritten for every \(t\in [0,T]\) as
\vspace{-.3cm} 
\begin{equation}
	J^\alpha_t = \frac{x^\gamma_0}{\gamma} \exp\!\left(
	\int_0^t \gamma\Big(r(s) + \alpha_s^\top \lambda_s - \tfrac12 \left|\alpha_s\right|^2\Big)\,ds
	+
	\int_0^t \gamma\alpha_s^\top \, dB_{s} + Y_t
	\right), \quad x_0 \geq 0.
\end{equation}
where the pair \((Y, \Lambda)\) satisfies a rather simple backward stochastic differential equation (BSDE) under \(\tilde{\P}\) with the final condition \(Y_T = 0\), so that \(\Gamma_t := \exp\big(Y_t\big)\) satisfies the BSDE~\eqref{eq:gamma_Powerheston}. This approach is analogous to that in~\cite[Equation~13]{HuImkellerMueller2005}.
The main result we present in this case is as follows:

\vspace{-.2cm} 
\begin{Theorem}\label{Thm:ExpoPower_riccati_1}
	Let \( \psi \) be the unique, continuous non-continuable solution of the inhomogeneous Ricatti-Volterra equation  	~\eqref{eq:RiccatiPowerpsi1}-~\eqref{eq:RiccatiPowerpsi2}
	on the interval \( [0, T_{\text{max}}) \) so that Assumption~\ref{assm:gen} is in force. Then \( 	J^\alpha_t = \frac{(X_t^{\alpha})^{\gamma}}{\gamma}\Gamma_t \) satisfies the martingale optimality principle for \( t \in [0, T] \), \( T < T_{\text{max}} \), and the optimal portfolio strategy \( \alpha^* \) is given by
	\vspace{-.3cm} 
	\begin{align}
		\label{Eq:alpha_power*1}
		\c^*_t &= \frac{1}{1 - \gamma} \left( \lambda_t + \delta \Sigma \Lambda_t \right) = \frac{1}{1 - \gamma} \sqrt{\diag(V_t)} \left( \theta + \delta \Sigma \nu \varsigma(t) \psi(T - t) \right), \quad 0 \leq t \leq T \\
		&= \;  \Big(\frac{1}{1 - \gamma}  \big(\theta_i + \delta \rho\nu_i\varsigma^i(t) \psi^i(T-t) \big) \sqrt{V_t^i}   \Big)_{1 \leq i \leq d}, \quad 0 \leq t \leq T.\label{Eq:alpha_power*2}
	\end{align}
	\vspace{-.3cm} 
	Moreover, \(X^{\alpha^*}\) satisfies~\eqref{eq:estimateXPower}
	and $\alpha^*$ is admissible. 
\end{Theorem}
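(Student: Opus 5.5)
The plan is to verify the three conditions of the martingale optimality principle (Definition~\ref{def:Martopt}) for $J^\alpha_t=\frac{(X^\alpha_t)^\gamma}{\gamma}\Gamma_t$, using the Riccati BSDE \eqref{eq:gamma_Powerheston} of Proposition~\ref{prop:ExpoPower_riccati_1}. Conditions (1) and (2) are immediate. Since $\Gamma_T=1$, we have $J^\alpha_T=U(X^\alpha_T)$. Also $J^\alpha_0=\frac{x_0^\gamma}{\gamma}\Gamma_0$ does not depend on $\alpha$, because $\Gamma$ is defined from $V$ only.

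For condition (3), I first rewrite \eqref{eq:gamma_Powerheston} under $\P$. Since $\widetilde W_t=W_t-\frac{\gamma}{1-\gamma}\int_0^t\Sigma\lambda_s\,ds$, the $\P$-dynamics are
\[
\frac{d\Gamma_t}{\Gamma_t}=\Big(-\gamma r(t)-\tfrac{\gamma}{2(1-\gamma)}|\lambda_t+\delta\Sigma\Lambda_t|^2\Big)dt+\delta\Lambda_t^\top dW_t ,
\]
which follows by completing the square, with $\Sigma$ symmetric diagonal. Next I apply Itô's formula to $(X^\alpha)^\gamma$ using \eqref{Eq:wealth}, and then to the product with $\Gamma$. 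The cross-variation is $d\langle B,W\rangle_t=\Sigma\,dt$. Collecting the drift gives
\[
\frac{dJ^\alpha_t}{J^\alpha_t}=\gamma\Big(\alpha_t^\top(\lambda_t+\delta\Sigma\Lambda_t)-\tfrac{1-\gamma}{2}|\alpha_t|^2-\tfrac{1}{2(1-\gamma)}|\lambda_t+\delta\Sigma\Lambda_t|^2\Big)dt+\gamma\alpha_t^\top dB_t+\delta\Lambda_t^\top dW_t .
\]
The drift equals $-\frac{\gamma(1-\gamma)}{2}|\alpha_t-\alpha^*_t|^2\le0$, with $\alpha^*$ given by \eqref{Eq:alpha_power*1}. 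Hence $J^\alpha$ is a positive local supermartingale for every admissible $\alpha$, and a local martingale for $\alpha^*$. The componentwise form \eqref{Eq:alpha_power*2} then follows from \eqref{eq:Lambda}, since $\Sigma=\rho I$.

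To remove the word \emph{local}, I argue differently for general $\alpha$ and for $\alpha^*$. For general $\alpha$, $J^\alpha\ge0$ is a local supermartingale, and by Fatou it is a true supermartingale provided $\E[J^\alpha_0]<\infty$. This gives $\E[U(X^\alpha_T)]\le J^\alpha_0$. Alternatively, uniform integrability follows from \eqref{eq:estimateXPower} combined with $\Gamma\in\mathbb S^p$ via Hölder, and this is what makes $J^\alpha$ of class (D) when that is needed.

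For $\alpha^*$, $J^{\alpha^*}=J_0\,\mathcal E\big(\int\gamma\alpha^{*\top}dB+\delta\Lambda^\top dW\big)$. This stochastic exponential is a true martingale by Lemma~\ref{lm: extended_m_AJ_lemma}, since $\alpha^*$ and $\Lambda$ are linear in $\sqrt{V}$ with bounded deterministic coefficients, using $T<T_{\max}$ so that $\psi$ is continuous on $[0,T]$. Equivalently, the exponential moment \eqref{eq:assumption_novikov} from Assumption~\ref{assm:gen} gives Novikov-type integrability.

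Admissibility of $\alpha^*$ is checked next. The SDE \eqref{Eq:wealth} has the explicit positive solution \eqref{eq:wealthPowerSol}, and $\int_0^T|\alpha^*_s|^2ds<\infty$ a.s. by continuity of $V$.

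\textbf{The main obstacle} is \eqref{eq:estimateXPower}: $\E[\sup_t (X^{\alpha^*}_t)^p]<\infty$ for some $p>1$. I would write $(X^{\alpha^*}_t)^p$ as $x_0^p e^{p\int r}$ times a stochastic exponential $\mathcal E(p\int\alpha^{*\top}dB)_t$ times $\exp\big(\int_0^t(p\alpha^{*\top}\lambda+\frac{p^2-p}{2}|\alpha^*|^2)ds\big)$. I would then bound the supremum by Cauchy--Schwarz and Doob's maximal inequality applied to $\mathcal E(2p\int\alpha^{*\top}dB)$. This reduces the problem to $\E\exp\big(c\int_0^T(|\lambda_s|^2+|\Lambda_s|^2)ds\big)<\infty$ with a constant $c$ of order $8p^2-2p$ times $(1+|\Sigma|^2)$, which is exactly what the constant $a(p)$ in \eqref{eq:constap} accounts for. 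Condition (b) of Definition~\ref{Def:adm} then follows from the $p$-th moment bound, since $\gamma<1<p$.
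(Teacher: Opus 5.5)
Your proposal is correct and follows essentially the same route as the paper. The shared steps are:
\begin{enumerate}
\item Apply It\^o's formula to $J^\alpha$, converting the $d\widetilde W$ term into $dW$; this is exactly your $\mathbb{P}$-form of the BSDE.
\item Identify the drift as $-\frac{\gamma(1-\gamma)}{2}|\alpha_t-\alpha^*_t|^2\le 0$, which vanishes at $\alpha^*$.
\item Use Fatou's lemma to pass from the nonnegative local supermartingale $J^\alpha$ to a true supermartingale.
\item Use the stochastic-exponential martingale lemma with $g_1=\frac{\gamma}{1-\gamma}$ and $\kappa=g_2=\delta$ to show $J^{\alpha^*}$ is a martingale.
\item Obtain admissibility from the Cauchy--Schwarz/Doob/H\"older/Novikov estimate of Step~1 in the proof of Theorem~\ref{Thm:powerUtilityGeneral}.
\end{enumerate}
Your alternative class-(D) remark is unnecessary, since the Fatou argument already yields the supermartingale property.
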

\vspace{-.4cm} 
\subsubsection{The general correlation case: A verification argument}\label{general}
\noindent The martingale distortion approach, used by \cite{HanWong2020b} in the one dimensional case
can only be applied to the multivariate setting if the correlation structure is highly degenerate, i.e. \( \rho_1 = \ldots = \rho_d.\) (see also \cite{fouque2018aoptimal, AichingerDesmettre2021}).
For the case where the correlation in ~\eqref{eq:correstructureheston} is given by an arbitrary vector $(\rho_1,\dots,\rho_d)\in[-1,1]^d$, the martingale distortion arguments from the previous section do not work anymore. However, we remark that when \( \rho_1 = \ldots = \rho_d=\rho\), if $\psi$ is the unique global solution of the Riccati--Volterra equation~\eqref{eq:RiccatiPowerpsi1}--~\eqref{eq:RiccatiPowerpsi2}, then  setting $\tilde{\psi} = \delta\,\psi$ and $\theta=\diag{(\theta_1,\dots,\theta_d)}$, we have using \(\delta=\frac{1-\gamma}{1-\gamma+\gamma\rho^2}\) 
\vspace{-.3cm} 
\begin{align*}
	\delta F(s,\psi) &=  \frac{\gamma}{1-\gamma} \nu \varsigma(s) \Sigma\theta (\delta \psi) + D^\top (\delta\psi) + \frac {1}{2\delta} \Big(\nu\varsigma(s)(\delta\psi)\Big)^2\\
	&= \frac{\gamma}{1-\gamma} \nu \varsigma(s) \Sigma\theta (\delta \psi) + D^\top (\delta\psi) + \frac {1}{2}\left[\Big(\nu\varsigma(s)(\delta\psi)\Big)^2 +  \frac{\gamma}{1-\gamma} \Big(\nu\varsigma(s)\Sigma(\delta\psi)\Big)^2 \right] = \tilde{F}(s,\tilde{\psi})
\end{align*}
where \(\tilde{F}(s,\tilde{\psi}):= \frac{\gamma}{1-\gamma} \nu \varsigma(s) \Sigma\theta \tilde{\psi} + D^\top \tilde{\psi} + \frac {1}{2}\left[(\nu\varsigma(s)\tilde{\psi})^2 +  \frac{\gamma}{1-\gamma} (\nu\varsigma(s)\Sigma\tilde{\psi})^2 \right]\) so that $\tilde{\psi}\in C([0,T],\R^d)$ is the unique global solution of the inhomogeneous Riccati--Volterra equation
\begin{align}
	\tilde{\psi}^i(t)&= \int_0^t K_i(t-s)\big(\frac{\gamma \theta_i^2}{2(1-\gamma)}  + \tilde{F}_i(T-s,\tilde{\psi}(s))\big) ds,  \label{eq:RiccatiPowerTilpsi1} \\
	\tilde{F}_i(s,\tilde\psi) &=  \frac{\gamma}{1-\gamma} \theta_i \rho_i \nu_i \varsigma^i(s) \tilde{\psi}^i + (D^\top \tilde{\psi})_i + \frac {\nu_i^2} 2 \left[  (\varsigma^i(s)\tilde{\psi}^i)^2+\frac{\gamma}{1-\gamma}\rho_i^2(\varsigma^i(s)\tilde{\psi}^i)^2\right], \quad i=1,\ldots,d, \label{eq:RiccatiPowerTilpsi2}
\end{align} 
Consequently, to avoid restrictions on the correlation structure linked to the martingale distortion transformation, we  use a verification arguments in the spirit of \cite{BauerleLi2013} to solve the optimization problem, thus extending the results obtained in the preveous section to the more general correlation structure. 
\begin{Proposition}
	\label{prop:ExpoPower_riccati_2}
	Assume that there exists  a solution $\psi \in C([0,T],\R^d)$ to the inhomogeneous Riccati-Volterra equation:
	\vspace{-.3cm} 
	\begin{align}
		\qquad\; \psi^i(t)&= \int_0^t K_i(t-s)\big(\frac{\gamma \theta_i^2}{2(1-\gamma)}  + F_i(T-s,\psi(s))\big) ds, 
		  \label{eq:RiccatiPowerTilpsi1} \\
		F_i(s,\psi) &=  \frac{\gamma}{1-\gamma} \theta_i \rho_i \nu_i \varsigma^i(s) \psi^i + (D^\top \psi)_i + \frac {\nu_i^2} 2 \left[  (\varsigma^i(s)\psi^i)^2+\frac{\gamma}{1-\gamma}\rho_i^2(\varsigma^i(s)\psi^i)^2\right], \; i=1,\ldots,d.
		\label{eq:RiccatiPowerTilpsi2}
	\end{align} 
	Let $\left(\Gamma, \Lambda\right)$ be defined as 
	\begin{equation}\label{eq:GammaPower2}
		\left\{
		\begin{array}{ccl}
			\Gamma_t &=& \exp\Big( \gamma\int_t^T r(s) ds +  \sum_{i=1}^d\int_t^T  \big(\frac{\gamma \theta_i^2}{2(1-\gamma)}  + F_i(s,\psi(T-s))\big) g^i_t(s) ds \Big), \\
			\Lambda_t^i &=&  \nu_i\varsigma^i(t) \psi^i(T-t) \sqrt{V^i_t}, \quad i=1,\ldots,d, \quad 0 \leq t \leq T, 
		\end{array}  
		\right.
	\end{equation}
	where $g$ $=$ $(g^1,\ldots,g^d)^\T$ is given by \eqref{eq:processg} i.e. the $\R^d$-valued process \((g_t(s))_{t\leq s}\)  is defined in~\eqref{eq:processg}.  Then, for some $ p >  1 \wedge \frac{1-\gamma}{\gamma} $, $\left(\Gamma,\Lambda\right)$  is a  $\mathbb{S}^{p}_{\F}([0,T], \R_+^*) \times L^2_{\F}([0,T], \R^d)$-valued solution to the Riccati BSDE~\eqref{eq:gamma_PowerTilheston}
	\begin{equation}\label{eq:gamma_PowerTilheston}
		\left\{
		\begin{array}{ccl}
			\frac{d\Gamma_t}{\Gamma_t} &=&  \big(- \gamma r(t)  - \frac{\gamma}{2(1-\gamma)} \left| \lambda_t+ \Sigma \Lambda_t \right|^2\big)dt + \Lambda_t^\top dW_t, \\
			\Gamma_T &=&  1. 
		\end{array}  
		\right.
	\end{equation}
\end{Proposition}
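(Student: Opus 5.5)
Set $Y_t:=\log\Gamma_t=\gamma\int_t^T r(s)\,ds+\sum_{i=1}^d\int_t^T h_i(s)\,g^i_t(s)\,ds$ with the deterministic weight $h_i(s):=\frac{\gamma\theta_i^2}{2(1-\gamma)}+F_i(s,\psi(T-s))$. The plan is to compute $dY_t$ by differentiating in $t$ the adjusted forward process $g_t(s)$ of \eqref{eq:processg}, and then to use the Riccati--Volterra equation \eqref{eq:RiccatiPowerTilpsi1} to identify the martingale part as $\Lambda_t^\top dW_t$ and the drift as the claimed one. Afterwards we apply Itô's formula to $\Gamma=e^Y$ and check the integrability claims.

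\textbf{Step 1 (dynamics of $Y$).} Since $g_t(s)=g_0(s)+\int_0^t K(s-u)\,dZ_u$, for fixed $s$ we have $d_t g_t(s)=K(s-t)\,dZ_t$ for $t<s$, and $g_t(t)=V_t$. Write $\int_t^T h_i(s)g^i_t(s)ds=\int_t^T h_i(s)g_0^i(s)ds+\int_t^T h_i(s)\int_0^t K_i(s-u)\,dZ^i_u\,ds$. Stochastic Fubini is licit by the moment bounds \eqref{eq:L^p-supBound} and the boundedness of $\varsigma$ and $\psi$. Differentiating this expression gives
\[
dY_t=\Big(-\gamma r(t)-\sum_i h_i(t)V^i_t\Big)dt+\sum_i\Big(\int_t^T h_i(s)K_i(s-t)\,ds\Big)\,dZ^i_t .
\]
With the change of variable $s\mapsto T-s$, equation \eqref{eq:RiccatiPowerTilpsi1} reads $\int_t^T K_i(s-t)h_i(s)\,ds=\psi^i(T-t)$. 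Hence
\[
dY_t=\Big(-\gamma r(t)-\sum_i h_i(t)V^i_t\Big)dt+\sum_i\psi^i(T-t)\big[(DV_t)_i\,dt+\nu_i\varsigma^i(t)\sqrt{V^i_t}\,dW^i_t\big].
\]
The martingale part is therefore exactly $\Lambda_t^\top dW_t$. The drift contribution $\psi(T-t)^\top DV_t$ equals $\sum_i (D^\top\psi(T-t))_i V^i_t$, and it cancels the $(D^\top\psi)_i$ term inside $F_i$.

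\textbf{Step 2 (identification of the generator).} By Itô's formula, $d\Gamma_t/\Gamma_t=dY_t+\tfrac12|\Lambda_t|^2dt$, because $d\langle W^i\rangle_t=dt$ and $W^i,W^j$ have zero covariation for $i\ne j$ under the structure \eqref{eq:correstructureheston} (the $B^{\perp,i}$ are independent and $\Sigma_i\perp\Sigma_j$). Collecting the $V^i_t\,dt$ terms gives the drift
\[
-\gamma r(t)-\sum_i V^i_t\Big[\tfrac{\gamma\theta_i^2}{2(1-\gamma)}+\tfrac{\gamma}{1-\gamma}\theta_i\rho_i\nu_i\varsigma^i\psi^i+\tfrac{\gamma}{2(1-\gamma)}\rho_i^2\nu_i^2(\varsigma^i\psi^i)^2\Big]
=-\gamma r(t)-\tfrac{\gamma}{2(1-\gamma)}\,|\lambda_t+\Sigma\Lambda_t|^2 ,
\]
because $(\lambda_t+\Sigma\Lambda_t)_i=\sqrt{V^i_t}\,(\theta_i+\rho_i\nu_i\varsigma^i\psi^i)$. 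Finally $\Gamma_T=1$ since $Y_T=0$.

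\textbf{Step 3 (integrability), the main obstacle.} Membership $\Lambda\in L^2_\F$ follows from \eqref{eq:moments V1} and the boundedness of $\varsigma$ and $\psi$. Positivity of $\Gamma$ is clear from its exponential form. The hard point is $\E[\sup_t\Gamma_t^p]<\infty$. My approach is to write $\Gamma_t=\exp(Y_0+\int_0^t\cdots)$ via Step 1, bound the drift term by $C\int_0^T\sum_i V^i_s\,ds$ in absolute value, and control $\sup_t\exp\big(p\int_0^t\Lambda^\top dW\big)$ by writing it as a stochastic exponential times $\exp\big(\tfrac{p}{2}\int|\Lambda|^2\big)$. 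I then apply Cauchy--Schwarz, Doob's inequality to the exponential martingale of $2p\Lambda$ (a true martingale by the Novikov-type condition \eqref{eq:assumption_novikov} and Lemma~\ref{lm: extended_m_AJ_lemma}), and Assumption~\ref{assm:gen}, which guarantees $\E\big[\exp\big(a\int_0^T\sum_iV^i_s\,ds\big)\big]<\infty$ with $a(p)$ large enough to absorb the constants. Alternatively, $\Gamma_0=e^{Y_0}$ involves $g_0$, and this requires exponential integrability of $V_0$, which I would assume through the $L^p$ hypotheses on $\varphi V_0$ or by conditioning on $V_0$. Tracking that the constants match $a(p)$ in \eqref{eq:constap} is where care is needed.
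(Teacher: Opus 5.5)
Your Steps 1 and 2 are correct and match what the paper does. The paper refers back to the computation in the proof of Proposition~\ref{prop:ExpoPower_riccati_1}: stochastic Fubini with $d_tg_t(s)=K(s-t)\,dZ_t$ and $g_t(t)=V_t$; the time reversal turning \eqref{eq:RiccatiPowerTilpsi1} into $\int_t^TK_i(s-t)h_i(s)\,ds=\psi^i(T-t)$; and the cancellation of $\frac{\nu_i^2}{2}(\varsigma^i\psi^i)^2V^i_t$ against $\frac12|\Lambda_t|^2$.

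\textbf{The gap is in Step 3.} Writing $\Gamma_t=\Gamma_0\exp\big(\int_0^tb_s\,ds+\int_0^t\Lambda_s^\top dW_s\big)$ leaves the factor $\Gamma_0=\exp\big(\gamma\int_0^Tr(s)\,ds+\sum_i\int_0^Th_i(s)g^i_0(s)\,ds\big)$. Here $g_0(s)=\varphi(s)V_0+\int_0^sK(s-u)\mu(u)\,du$ is random, because $V_0$ is a random variable (with $\mathrm{Var}(V_0^i)=v_0^i$ in the fake stationary regime). So $\log\Gamma_0$ is affine in $V_0$. Any H\"older/Doob chain starting from this factorisation therefore needs a moment of $\Gamma_0$ of order at least $p$, i.e.\ exponential moments of $V_0$. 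The hypotheses only give polynomial moments of $\varphi V_0$. Conditioning on $V_0$ only produces a bound $C(V_0)$ whose integrability is the same open question. As written, the $\mathbb{S}^p$ claim is not proved under the stated assumptions.

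\textbf{The missing idea} is to read off a Feynman--Kac representation from the BSDE itself. Put
$\bar M_t:=\Gamma_t\exp\big(\int_0^t(\gamma r(s)+\frac{\gamma}{2(1-\gamma)}|\lambda_s+\Sigma\Lambda_s|^2)\,ds\big)$.
By your Step 2, $d\bar M_t=\bar M_t\Lambda_t^\top dW_t$, so $\bar M=\Gamma_0\,\mathcal E\big(\int_0^\cdot\Lambda_s^\top dW_s\big)$. The stochastic exponential is a true martingale by Lemma~\ref{lm: extended_m_AJ_lemma}. Since $\Gamma_0$ is $\cF_0$-measurable and $\Gamma_T=1$, the identity $\E[\bar M_T\mid\cF_t]=\bar M_t$ gives
\[
\Gamma_t=\E\Big[\exp\Big(\int_t^T\big(\gamma r(s)+\tfrac{\gamma}{2(1-\gamma)}|\lambda_s+\Sigma\Lambda_s|^2\big)\,ds\Big)\,\Big|\,\cF_t\Big].
\]
Hence $\Gamma_t\le C\,M_t$ for a deterministic constant $C$, where $M$ is the martingale closed by $\exp\big(\frac{\gamma}{2(1-\gamma)}\int_0^T|\lambda_s+\Sigma\Lambda_s|^2\,ds\big)$. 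Then Doob's $L^p$ inequality, the bound $|\lambda+\Sigma\Lambda|^2\le2(1+|\Sigma|^2)(|\lambda|^2+|\Lambda|^2)$ and \eqref{eq:assumption_novikov} give $\E[\sup_t\Gamma_t^p]<\infty$ directly.

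This is the paper's route, and it buys three things:
\begin{itemize}
\item It controls $\Gamma_0$ automatically, since $\Gamma_0$ is itself a conditional expectation of an exponentially integrable variable.
\item It gives positivity with an explicit lower bound, $\Gamma_t\ge e^{\gamma\int_t^Tr(s)ds}$.
\item It needs only a single exponential moment of $\int_0^T\sum_iV^i_s\,ds$, instead of nested Cauchy--Schwarz constants.
\end{itemize}
Your absolute-value bound on the drift is also unnecessary: the drift of $Y$ equals $-\gamma r-\frac{\gamma}{2(1-\gamma)}|\lambda+\Sigma\Lambda|^2-\frac12|\Lambda|^2\le-\gamma r$.
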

\noindent {\bf Proof: } The proof that $\left(\Gamma, \Lambda\right)$ satisfy~\eqref{eq:gamma_PowerTilheston} is a straightforward adaptation of the arguments from the proof of Proposition~\ref{prop:ExpoPower_riccati_1}.
{It remains to show that $\Gamma_t >0$ for all $ t \in [0, T]$, $\p$-$\as$ and $\left(\Gamma, \Lambda\right) \in \mathbb{S}^{p}_{\F}([0,T],\R) \times L^2_{\F}([0,T], \R^d)$} for some sufficiently large \(p>1\).
An application of It\^o's formula to the process \[\bar{M}_t := \Gamma_t \exp\Big(\int_0^t  \big(\gamma r(s) + \frac{\gamma}{2(1-\gamma)} \left| \lambda_s+ \Sigma \Lambda_s \right|^2\big) ds\Big), \; t \leq T,\]
combined with the dynamics ~\eqref{eq:gamma_PowerTilheston} shows that $d\bar{M}_t = \bar{M}_t  \Lambda_t^\top dW_t$, so that $\bar{M}$ is a local martingale of the form \(\bar{M}_t =\;\mathcal E\Big( \int_0^t \Lambda_s^\top dW_s\Big)=\; \mathcal E\Big( \int_0^t \sum_{i=1}^d \nu_i\varsigma^i(s)  \psi^i(T-s)\sqrt{V^i_s}dW^i_s \Big).\)
\noindent Since $\psi$ is continuous, it is bounded; likewise, $\varsigma$ is bounded. Therefore a straightforward application of Lemma~\ref{lm: extended_m_AJ_lemma} with $g_{2} = 0$ and $g_{1,i}(s) = \nu_i \varsigma^i(s)\psi^i (T-s) \in L^{\infty}([0,T],\R)$,  recall~\eqref{eq:moments V1}, yields that the stochastic exponential \(\bar{M}\) is a true $\P$- martingale. Now, as $\Gamma_T=1$, writing $\E[\bar{M}_T|\mathcal F_t]=\bar{M}_t$, we obtain
\begin{align}
	\Gamma_t &= \E \Big[ \exp\Big(\int_t^T  \big(\gamma r(s) + \frac{\gamma}{2(1-\gamma)} \left| \lambda_s+ \Sigma \Lambda_s \right|^2\big) ds\Big) \mid \mathcal F_t\Big], \quad t\leq T \label{eq:Power_ito_gamma_all}\\
	&\leq \exp\Big(\int_0^T \gamma r(s) ds \Big) M_t, \;\text{with}\; M_t:= \E \Big[ \exp\Big(\frac{\gamma}{2(1-\gamma)}\int_0^T \left| \lambda_s+ \Sigma \Lambda_s \right|^2ds\Big) \mid \mathcal F_t\Big], \quad t\leq T\nonumber
\end{align}
We then have in view of~\eqref{eq:Power_ito_gamma_all} that there exists some positive constant \(m>0\) such that $\Gamma_t \geq m > 0$ for every \(t\in [0,T]\).
Using the elementary inequality \((a+b)^p \le 2^{(p-1)^+}\big(a^p + b^p\big)\) for \(a,b>0\), one gets
\begin{equation*}
	|\lambda_s +  \Sigma\Lambda_s|^2 \leq 2(|\lambda_s|^2 +  |\Sigma\Lambda_s|^2) \leq 2(1 + |\Sigma|^2)(|\lambda_s|^2 +  |\Lambda_s|^2) \leq 2\beta(1 + |\Sigma|^2)(|\lambda_s|^2 +  |\Lambda_s|^2).
\end{equation*}
where \(\beta>1\) is choosen such that \(\frac{\beta\gamma}{(1-\gamma)}>1\). Consequently, by simple calculation:
\begin{align*}
	\E \Big[ \exp\Big(\frac{\gamma}{2(1-\gamma)}\int_0^T \left| \lambda_s+ \Sigma \Lambda_s \right|^2ds\Big) \Big] &\leq \E \Big[ \exp\Big(\frac{\beta\gamma}{(1-\gamma)}(1 + |\Sigma|^2)\int_0^T \big(|\lambda_s|^2 +  |\Lambda_s|^2\big)ds\Big) \Big]<\infty
\end{align*}
thanks to the Novikov-type condition~\eqref{eq:assumption_novikov} with constant \(a(\frac{\beta\gamma}{(1-\gamma)}) = \frac{\beta\gamma}{(1-\gamma)}(1 + |\Sigma|^2)\).  Therefore, $M_t$ is a martingale under $\P$.  
Let $ p > 1 \wedge \frac{1-\gamma}{\gamma} $.
By {virtue of Doob's maximal inequality}, in the second line,

\begin{align*}
	&\, \E \Big[ \sup_{ t \in [0, T]} \big| \Gamma_t \big|^p \Big] \leq e^{ p \gamma\int_0^Tr(s) ds} \E \Big[ \sup_{ t \in [0, T]} \big| M_t \big|^{p}\Big] \\
	& \hspace{2.5cm}\leq e^{ p \gamma\int_0^Tr(s) ds} \Big(\frac{p}{p-1}\Big)^{p} \E \Big[ \exp\Big(\frac{p\gamma}{2(1-\gamma)}\int_0^T \left| \lambda_s+ \Sigma \Lambda_s \right|^2ds\Big)\Big]\\
	& \hspace{2.5cm}\leq \Big(\frac{p}{p-1}\Big)^{p} e^{ p \gamma\int_0^Tr(s) ds} \E \left[ \exp\left({a(\frac{p\gamma}{(1-\gamma)}) \int_0^T \left( |\lambda_s|^2 + |\Lambda_s|^2\right)  ds}\right)    \right]< \infty
\end{align*}
where we used condition~\eqref{eq:assumption_novikov}
with \(a(\frac{p\gamma}{(1-\gamma)}) =\frac{p\gamma}{(1-\gamma)}(1 + |\Sigma|^2)  \).  Therefore, $\E\big[ \sup_{ t \in [0, T]} |\Gamma_t|^p \big] < \infty $ holds.
As for $\Lambda$, it is clear that it belongs to $L^2_{\F}([0,T], \R^d)$ since $\varsigma$ and $\psi$ are bounded, $\psi$ is continuous thus bounded and $\E \Big[\int_0^T  \sum_{i=1}^d V^i_s ds \Big] <  \infty$ by \eqref{eq:moments V1}. This complete the Proof\hfill $\Box$

\medskip
\noindent First of all, we point out that for each of the inhomogeneous Riccati-Volterra equations ~\eqref{eq:RiccatiPowerTilpsi1}-~\eqref{eq:RiccatiPowerTilpsi2} and ~\eqref{eq:RiccatiPowerpsi1}-~\eqref{eq:RiccatiPowerpsi2}, there exists a unique non-continuable continuous solution $(\psi, T_{\operatorname{max}})$ under Assumption~\ref{assump:kernelVolterra} by~\cite[Theorem A.2.]{Gnabeyeu2026a} as stated in the following Remark.

\medskip
\noindent{\bf Remark:} Fix \(T>0\)
and assume that $K$ satisfies Assumption~\ref{assump:kernelVolterra}. As the matrix $D$ in the drift of the volatility process is a diagonal matrix, i.e. $D=-\diag{(\lambda_1,\dots, \lambda_d)}$, for \(i=1,\ldots,d,\) $\psi^i$ satisfies the Volterra equation
\vspace{-.3cm} 
\begin{equation}\label{eq:comp_}
	\chi(t) = \int_0^t K_i(t-s)  \Big( \frac{\gamma \theta_i^2}{2(1-\gamma)} - \big(\lambda_i -\frac{\gamma}{1-\gamma}\theta_i \rho_i \nu_i \varsigma^i(T-s)\big) \chi(s)  + \frac {\nu_i^2} 2  \frac{1-\gamma+\gamma\rho_i^2}{1-\gamma}\varsigma^i(T-s)^2\chi(s) ^2\Big)ds, \; t\leq T.
\end{equation}
\cite[Theorem A.2.]{Gnabeyeu2026a} guarantees that there exists a unique non-continuable continuous solution \((\psi, T_{\max}^i)\) to Equation~\eqref{eq:comp_} with $\psi^i \in C([0,T_{max}^i),\R)$ in the sense that $\psi^i$ satisfies~\eqref{eq:comp_} on $[0,T_{max}^i)$ with $T_{max}^i \in (0,T]$ and $\sup_{t<T_{max}^i}|\psi^i( t)| = +\infty$, if $T_{max}^i<T$. 
Combining the component-wise solutions, we finally obtain the existence of a unique non-continuable solution $\psi \in C([0,T_{max}),\R^d)$ of the inhomogeneous Ricatti--Volterra Equation~\eqref{eq:RiccatiPowerTilpsi1}-~\eqref{eq:RiccatiPowerTilpsi2} (and in particular of ~\eqref{eq:RiccatiPowerpsi1}--~\eqref{eq:RiccatiPowerpsi2} ). 

\medskip
\noindent  Note that our candidate for the optimal portfolio strategy $\alpha^*$ is given, using~\eqref{Eq:alpha_power*1}--~\eqref{Eq:alpha_power*2}, by
\vspace{-.1cm} 
\begin{equation}\label{eq:optcandPower}
	\alpha_t^*=\frac{1}{1 - \gamma} \left( \lambda_t + \Sigma \Lambda_t \right) = \;  \Big(\frac{1}{1 - \gamma}  \big(\theta_i + \rho_i\nu_i\varsigma^i(t) \psi^i(T-t) \big) \sqrt{V_t^i}   \Big)_{1 \leq i \leq d}, \; 0 \leq t \leq T.
\end{equation}
Since owing to Lemma~\ref{lm: extended_m_AJ_lemma}, \(\mathcal E\big(\int_0^t\alpha_s^{*\T} dB_s\big)\) is a true exponential martingale, we redefine the set of admissible portfolio strategy in this \textit{non-degenerate setting} by
    \begin{equation}\label{eq:StrongAdmStratPower}
	\mathcal A  = \left\{ 
	\begin{aligned}
		& \alpha:=(\alpha_t)_{t \in [0,T]}, \; \F-\text{prog.~measurable such that~} \E\big[\exp\big(\int_0^T \left|\alpha_s\right|^2 ds\big)\big] < \infty\\
		& \mbox{ and Equation~\eqref{Eq:wealth} has a positive solution satisfying } ~\eqref{eq:estimateXPower} 
	\end{aligned}
	\right\}
\end{equation}
\noindent {\bf Remark:}
In particular, \(\alpha \in {L^{2}_{\F}([0,T], \R^d)}\) and recalling that \(\alpha_t := \sigma(V_t)^\top \pi_t = \sqrt{\diag(V_t)}^\top \pi_t\) where $(\pi_t)_{t\in[0,T]}$ is a deterministic $\mathbb{R}^d$-valued process representing a \textit{proportion} of wealth, we can assume that 
\((\pi_t)_{t \in [0,T]}\) is bounded by some constant \(\pi^{\max} \in \mathbb{R}^d\) so that
a sufficient condition for the exponential integrability in~\eqref{eq:StrongAdmStratPower} is that Equation~\eqref{eq:CondExpoRepreA} admits a $C([0,T],\mathbb{R}^d)$ solution for a sufficiently large constant $a\geq\|\pi^{\max}\|:=\|\pi\|_{\infty,T}$

\smallskip
\noindent We will directly show that the value attained by the portfolio strategy~\eqref{eq:optcandPower} is \(\frac{x_0^\gamma}{\gamma}\Gamma_0\), and that no other admissible portfolio strategy can yield a larger value, through the following verification result:
\begin{Theorem}\label{Thm:powerUtilityGeneral}
	Let \( \psi \) be the unique, continuous non-continuable solution of the inhomogeneous Ricatti-Volterra equation  	~\eqref{eq:RiccatiPowerTilpsi1}-~\eqref{eq:RiccatiPowerTilpsi2}
	on the interval \( [0, T_{\text{max}}) \) so that Assumption~\ref{assm:gen} is in force. 
	Then for $t\in [0,T]$, $T< T_{\operatorname{max}}$, an optimal investment strategy $(\alpha_t^*)_{t\in [0,T]}$ for the Merton portfolio problem~\eqref{obj_power} is given by 
	\vspace{-.3cm} 
	\begin{align}
		\label{Eq:alpha_Generalpower*1}
		\c^*_t &= \frac{1}{1 - \gamma} \left( \lambda_t +  \Sigma \Lambda_t \right) = \frac{1}{1 - \gamma} \sqrt{\diag(V_t)} \left( \theta + \Sigma \nu \varsigma(t) \psi(T - t) \right) , \quad 0 \leq t \leq T \\
		&= \;  \Big(\frac{1}{1 - \gamma}  \big(\theta_i +  \rho_i\nu_i\varsigma^i(t) \psi^i(T-t) \big) \sqrt{V_t^i}   \Big)_{1 \leq i \leq d}, \quad 0 \leq t \leq T.\label{Eq:alpha_Generalpower*2}
	\end{align}
	Moreover,
	\vspace{-.3cm} 
	\begin{equation}\label{eq:boundPower}
		\E\Big[ \sup_{ t \in [0, T]} |X^{\alpha^*}_t|^p \Big] < \infty,\quad \text{for some sufficiently large}\quad p>1.
	\end{equation}
	and $\alpha^*$ is admissible. The value function defined in~\eqref{obj_power} can be written as
	\vspace{-.3cm} 
	\begin{equation*}
		\sup_{\alpha \in \mathcal{A}}
		\mathbb{E}_{x_0,V_0}
		\left[
		\frac{\big(X_T^{\alpha^*}\big)^{\gamma}}{\gamma}
		\right]
		= \mathcal{V}(x_0,V_0)=\frac{x_0^\gamma}{\gamma}\exp\Big( \gamma\int_0^T r(s) ds +  \sum_{i=1}^d\int_0^T  \big(\frac{\gamma \theta_i^2}{2(1-\gamma)}  + F_i(s,\psi(T-s))\big) g^i_0(s) ds \Big).
	\end{equation*}
\end{Theorem}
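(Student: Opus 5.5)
We prove Theorem~\ref{Thm:powerUtilityGeneral} by a verification argument built on Proposition~\ref{prop:ExpoPower_riccati_2}. For $\alpha\in\mathcal A$ define $J^\alpha_t := \frac{(X^\alpha_t)^\gamma}{\gamma}\Gamma_t$, where $(\Gamma,\Lambda)$ is the solution of the Riccati BSDE~\eqref{eq:gamma_PowerTilheston} from Proposition~\ref{prop:ExpoPower_riccati_2}. Then $J^\alpha_T=U(X^\alpha_T)$ because $\Gamma_T=1$. Also $J^\alpha_0=\frac{x_0^\gamma}{\gamma}\Gamma_0$, and $\Gamma_0$ equals the exponential expression in the statement because $g_0(s)=g^i_0(s)$ in~\eqref{eq:GammaPower2}. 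It therefore suffices to check conditions (1)--(3) of Definition~\ref{def:Martopt}.

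\textbf{Step 1 (Itô computation).} Write $dW_t=\Sigma^\top dB_t+\sqrt{I-\Sigma^\top\Sigma}\,dB^\perp_t$, so that $d\langle B,W\rangle_t=\Sigma\,dt$. Apply Itô's formula to $(X^\alpha)^\gamma\Gamma$ using~\eqref{Eq:wealth} and~\eqref{eq:gamma_PowerTilheston}. This gives
\[
\frac{dJ^\alpha_t}{J^\alpha_t}=\Big(\gamma\alpha_t^\top(\lambda_t+\Sigma\Lambda_t)-\tfrac{\gamma(1-\gamma)}{2}|\alpha_t|^2-\tfrac{\gamma}{2(1-\gamma)}|\lambda_t+\Sigma\Lambda_t|^2\Big)dt+\gamma\alpha_t^\top dB_t+\Lambda_t^\top dW_t .
\]
The $r$-terms cancel. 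Completing the square shows the drift equals $-\frac{\gamma(1-\gamma)}{2}\big|\alpha_t-\frac{1}{1-\gamma}(\lambda_t+\Sigma\Lambda_t)\big|^2\le 0$, and it vanishes exactly at $\alpha^*$ given by~\eqref{Eq:alpha_Generalpower*1}. Consequently
\[
J^\alpha_t=J_0\,\mathcal E\Big(\int_0^\cdot(\gamma\alpha_s^\top dB_s+\Lambda_s^\top dW_s)\Big)_t\exp\Big(-\tfrac{\gamma(1-\gamma)}{2}\int_0^t|\alpha_s-\alpha^*_s|^2ds\Big),
\]
the product of a nonnegative local martingale and a nonincreasing positive factor.

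\textbf{Step 2 (supermartingale for all $\alpha$).} The stochastic exponential is a nonnegative local martingale, hence a supermartingale by Fatou. Since $\gamma\in(0,1)$ we have $J^\alpha\ge0$, and the decreasing factor is at most $1$. Together these give $J^\alpha_t\ge \E[J^\alpha_T\mid\mathcal F_t]$, i.e.\ $J^\alpha$ is a supermartingale. Thus $\E[U(X^\alpha_T)]\le J_0$ for every $\alpha\in\mathcal A$. For robustness one could also argue via the uniform integrability built into~\eqref{eq:estimateXPower} and $\Gamma\in\mathbb S^p$ using Hölder, but nonnegativity already suffices.

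\textbf{Step 3 (martingale and admissibility at $\alpha^*$; main obstacle).} For $\alpha^*$ the drift factor is $1$, so $J^{\alpha^*}=J_0\,\mathcal E\big(\int(\gamma\alpha^{*\top}dB+\Lambda^\top dW)\big)$. Its integrand is $\sqrt{V^i}$ times bounded deterministic functions, since $\psi$ is continuous on $[0,T]\subset[0,T_{\max})$ and $\varsigma$ is bounded. Lemma~\ref{lm: extended_m_AJ_lemma}, with $g_{1,i}(s)=\nu_i\varsigma^i(s)\psi^i(T-s)$ and $g_2$ collecting the $B$-components of $\gamma\alpha^*$, makes it a true martingale, which yields $\E[U(X^{\alpha^*}_T)]=J_0$.

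The delicate part is admissibility, namely bound~\eqref{eq:boundPower} and the exponential integrability in~\eqref{eq:StrongAdmStratPower}. Since $|\alpha^*_t|^2\le\frac{2}{(1-\gamma)^2}(|\lambda_t|^2+|\Sigma|^2|\Lambda_t|^2)$, integrability of $\exp(\int|\alpha^*|^2)$ follows from~\eqref{eq:assumption_novikov}. For the moment bound, write~\eqref{eq:wealthPowerSol} as
\[
(X^{\alpha^*}_t)^p=x_0^p e^{p\int_0^t r}\,\mathcal E\Big(2p\int\alpha^{*\top}dB\Big)_t^{1/2}\exp\Big(\int_0^t\big(p\alpha^{*\top}_s\lambda_s+(p^2-\tfrac p2)|\alpha^*_s|^2\big)ds\Big).
\]
Take the supremum and apply Cauchy--Schwarz. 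Doob's inequality (with Lemma~\ref{lm: extended_m_AJ_lemma} applied to the exponential martingale) handles the first factor. The second factor is bounded by $\E[\exp(c(p)\int_0^T(|\lambda|^2+|\Lambda|^2))]$, with $c(p)\lesssim(8p^2-2p)(1+|\Sigma|^2)$. This is exactly the reason for the second entry of $a(p)$ in~\eqref{eq:constap}, so Assumption~\ref{assm:gen} closes the estimate. Finally, conditions (a)--(c) of Definition~\ref{Def:adm} follow from positivity of the exponential and $\E\int|\alpha^*|^2<\infty$ by~\eqref{eq:moments V1}.
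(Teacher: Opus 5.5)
Your proof is correct, but for the upper bound it takes a different and shorter route than the paper.

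\textbf{Where the two routes diverge.} The paper derives your factorization as its \eqref{eq:PowerUlti}, namely $\frac{(X^\alpha_T)^\gamma}{\gamma}=\frac{x_0^\gamma}{\gamma}\Gamma_0\,e^{\int_0^T D_s(\alpha_s)ds}F^\alpha_T$ with $D\le 0$. However, it uses this only at $\alpha^*$, to get equality (1).

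For inequality (2) it does something else:
\begin{itemize}
\item It changes measure to $\Q$ with density $\mathcal E(\gamma\int\alpha^\top dB)_T$. This is why it restricts to the class \eqref{eq:StrongAdmStratPower}: Novikov is needed there.
\item It writes $\alpha=\alpha^*+\hat\alpha$.
\item It uses the Riccati--Volterra equation in resolvent form (Lemma~\ref{lemma:Volt-H}) together with the $\Q$-dynamics of $V$ to reach $\E[(X^\alpha_T)^\gamma]=x_0^\gamma\Gamma_0\,\E^{\Q}[\mathcal E(\int\Lambda^\top d\widehat W)_T\,e^{-\frac{\gamma(1-\gamma)}{2}\int_0^T|\hat\alpha_s|^2ds}]$.
\item It then drops the nonpositive exponent.
\end{itemize}
Since $\mathcal E(\gamma\int\alpha^\top dB+\int\Lambda^\top dW)=\mathcal E(\gamma\int\alpha^\top dB)\,\mathcal E(\int\Lambda^\top d\widehat W)$ for $\widehat W=W-\gamma\int\Sigma\alpha\,ds$, this is exactly your identity read under $\Q$.

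\textbf{What each route buys.} Your nonnegativity/Fatou argument gets the bound directly from $\E[\mathcal E_T\mid\mathcal F_t]\le\mathcal E_t$ for a nonnegative local martingale. The paper itself uses this argument only in the degenerate case, Theorem~\ref{Thm:ExpoPower_riccati_1}. Your version needs no change of measure. It holds for every $\alpha\in L^{2,loc}_{\F}$ with positive wealth, not only for the restricted class \eqref{eq:StrongAdmStratPower}. It also avoids the resolvent computation, including the pathwise application of Lemma~\ref{lemma:Volt-H}(b) to a stochastic integral. In exchange, the paper's route does not depend on the BSDE representation of $\Gamma$ from Proposition~\ref{prop:ExpoPower_riccati_2}. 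It works only from the Riccati--Volterra equation and the Volterra dynamics of $V$, and its Remark recovers equality (1) the same way.

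\textbf{Two minor points to tidy up.}
\begin{itemize}
\item In the moment bound, after Cauchy--Schwarz the martingale factor is $\E[\sup_t\mathcal E(2p\int\alpha^{*\top}dB)_t]$. Doob's maximal inequality is not available in $L^1$, and the Lemma only gives the martingale property. Put an exponent larger than one on the martingale part instead. For example, as the paper does, apply Doob in $L^{2p}$ to $\mathcal E(\int\alpha^{*\top}dB)$ and bound $\E[\mathcal E(\int\alpha^{*\top}dB)_T^{2p}]$ by one more Cauchy--Schwarz and Novikov. Assumption~\ref{assm:gen} covers this for $p$ large.
\item In Lemma~\ref{lm: extended_m_AJ_lemma}, $g_1$ and $g_2$ are scalar weights on $(\lambda+\kappa\Sigma\Lambda)^\top dB$ and on $\Lambda^\top dW$ respectively. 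For $F^{\alpha^*}$ the right choice is therefore $g_1\equiv\gamma/(1-\gamma)$, $\kappa=1$, $g_2\equiv 1$. Your assignment of $g_1,g_2$ is garbled, though the conclusion is unaffected.
\end{itemize}
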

\noindent For the sake of space limitation, but without loss of self-containedness, we present a sketch of the proof. 

\smallskip
\noindent \emph{Sketch of Proof:}
In order to prove that $\alpha^*$ is indeed the optimal portfolio strategy, we show that for
\vspace{-.2cm}  
\[
G(x_0,V_0):=\frac{x_0^\gamma}{\gamma}\Gamma_0=\frac{x_0^\gamma}{\gamma}\exp\Big( \gamma\int_0^T r(s) ds +  \sum_{i=1}^d\int_0^T  \big(\frac{\gamma \theta_i^2}{2(1-\gamma)}  + F_i(s,\psi(T-s))\big) g^i_0(s) ds \Big), 
\]
we have
\vspace{-.1cm} 
\begin{enumerate}
	\item $\E_{x_0,V_0}{\big[\frac{(X_T^{\alpha^*})^{\gamma}}{\gamma}\big]}=G(x_0,V_0)$ for $\alpha_t^*=\frac{\lambda_t + \Sigma \Lambda_t}{1 - \gamma} =  \big(\frac{1}{1 - \gamma}  \big(\theta_i + \rho_i\nu_i\varsigma^i(t) \psi^i(T-t) \big) \sqrt{V_t^i}   \big)_{1 \leq i \leq d}, \; t\in [0,T]$,\label{a1}
	\item $\E_{x_0,V_0}{\big[\frac{(X_T^{\alpha})^{\gamma}}{\gamma}\big]}\leq G(x_0,V_0)$ for every other admissible strategy \(\alpha\in \mathcal{A}\).\label{b1}
\end{enumerate}
In fact, inserting the candidate $\alpha^*$ for the
optimal portfolio strategy in the SDE of the wealth equation show by~$(1)$ that the upper bound \(G\) can be obtained. Consequently  conditions~$(1)$ and~$(2)$ ensure that \(G\) is the value function of the problem~\eqref{obj_power} and $\alpha^*$ is the optimal portfolio strategy.


\vspace{-.2cm} 
\subsection{Optimal strategy for the exponential utility maximization problem}\label{subsect:Expo}
\noindent In this subsection, we consider the exponential utility case.
With a slightly different formulation, let $\pi_t$ denote the vector of the amounts invested in the risky assets $S$ at time $t$ in a self--financing strategy. We assume that the the process \((\pi_t)_{t\geq0}\) are progressively measurable. Then, the dynamics of the wealth $X^{\pi}$ of the portfolio we seek to optimize is given by 
\vspace{-.1cm} 
\begin{align*}
	\mathrm d X^\pi_t
	&=  \bigl(  \pi_t^\top \big( r(t) {\bold{1}_d} + \sigma(V_t) \lambda_t  \big) + (X^\pi_t - \pi_t^\top {\bold{1}_d})r(t) \bigr)\,\mathrm dt + \pi_t^\top\sigma(V_t)  \,\mathrm d B_t \\
	&= X^\pi_t \bigl(r(t) + \pi_t^\top  \sigma(V_t) \lambda_t\bigr)\,\mathrm dt + \pi_t^\top   \sigma(V_t) \,\mathrm d B_t=X_t^{\pi}(r(t)+\pi_t^{\top}{\diag(V_t)}{\theta})dt+\pi_t^{\top}\sqrt{\diag(V_t)}dB_{t}.
\end{align*}
Letting $\alpha_t := \sigma^\top(V_t)\pi_t $ be the investment strategy, the wealth $X^{\alpha}$ reads:
\begin{align}
	\label{eq:wealth}
	dX^{\alpha}_t &= \big( r(t) X^{\alpha}_t  + \alpha_t^\T \lambda_t \big) dt + \alpha_t^\T dB_t, \quad t \geq 0, \quad X_0^\alpha = x_0 \in \R. 
\end{align} 
By a standard calculation, the wealth process is then given by
\vspace{-.2cm} 
\begin{equation}\label{eq:wealthProcess} 
	X_t = e^{ \int^t_0 r(s) ds} \Big( x_0 + \int_0^t e^{ -\int^s_0 r(u) du}  \Big(\alpha_s^\T dB_s + \alpha_s^\T \lambda_s \mathrm{d}s \Big) \Big). 
\end{equation}
Note that it is sufficient to assume that \(\int_0^t (\left|\lambda_s\right|^2 + \left|\alpha_s\right|^2 )\, \mathrm{d}s < +\infty \) almost surely for all \( t \ge 0\)
in order to construct the stochastic integrals in Equation~\eqref{eq:wealthProcess}. This boundedness condition holds owing to the inequality \( |z|^2 \le 2 e^{|z|}, \, \forall z \in \R \), together with the condition~\eqref{eq:assumption_novikov}, and the following admissibility assumption, which is consistent with \cite[Definition~1]{HuImkellerMueller2005}. 
\begin{Definition}\label{Def:adm2}
	Let $\gamma > 0$, an investment strategy $\alpha(\cdot)$ is said to be admissible if 
	\begin{enumerate}
		\item[$(a)$] $\alpha(\cdot)$ is $\F$-adapted  
		 and $ \E[\int^t_0 \left|\alpha_s\right|^2  ds] < \infty$, $\forall \; t \in [0, T]$;
		\item[$(b)$] The wealth process (\ref{eq:wealth}) has a unique solution in terms of $(S,V,B)$, with $\p$-$\as$ continuous paths;
		\item[$(c)$] $\left\{\exp\big[ - \gamma e^{ \int^T_\tau r(u) du} X_\tau \big]: \tau \text{ stopping time with values in } [0, T] \right\}$ is a uniformly integrable family.
	\end{enumerate}
	In particular,
	\vspace{-.2cm}
	\begin{equation}\label{eq:uniformInt}
		\sup_{\tau \in [0, T]}\E\Big[ \exp\left(- p\gamma e^{ \int^T_\tau r(u) du} X^{\alpha}_\tau\right) \Big] < \infty, \text{ for some } p > 1.
	\end{equation}
	The set of all admissible investment strategies is denoted as $\cA$ and is naturally defined by
	$$\mathcal A  = \{ \alpha \in {L^{2,loc}_{\F}([0,T], \R^d)} \mbox{ such that \eqref{eq:wealth} has a  solution satisfying condition } ~\eqref{eq:uniformInt} \}.$$
\end{Definition}
\noindent The investor now considers the Merton exponential utility optimization problem, i.e. its aim is to find the value function $\mathcal{V}(x_0,V_0)$ for the CARA utility function such that
\vspace{-.2cm}
\begin{equation}\label{Expobj}
	\mathcal{V}(x_0,V_0) = \sup_{\alpha(\cdot) \in \cA } \E_{x_0,V_0}\left[ - \frac{1}{\gamma} \exp\Big(- \gamma X_T\Big) \right], \quad \gamma > 0.
\end{equation}
The solution method is similar to the power utility case. With slightly abuse of notations, we still use $\psi(\cdot)$, $\Gamma$, etc. However, they are redefined and not mixed with counterparts in power utility case.

\vspace{-.2cm}
\subsubsection{The degenerate correlation case}\label{degenerate}
\noindent We assume that the correlation in ~\eqref{eq:correstructureheston} is of the form $(\rho,\dots, \rho)$ for $\rho\in [-1,1]$. To construct the family of processes $\{ J^\alpha_t \}_{ t \in [0, T]}$, $\alpha \in \cA$, satisfying conditions~$(1)$-~$(2)$-~$(3)$ in Definition~\ref{def:Martopt}, we introduce the new probability measure  $\tilde{\mathbb{P}}$ defined via the Radon-Nikodym density or derivative at \(\mathcal{F}_T\) from
\vspace{-.2cm}
\[
\frac{d\tilde{\mathbb{P}}}{d\mathbb{P}}|_{\mathcal{F}_t}= \mathcal E\Big( -\int_0^t \sum_{i=1}^d\theta_i \sqrt{V^i_s}dB^i_s \Big) =\operatorname{exp}\Big(-\int_0^t \lambda_s^\top dB_{s}-\frac{1}{2}\int_0^t \left| \lambda_s \right|^2 ds\Big)
\]
where the stochastic exponential is a true martingale by Lemma~\ref{lm: extended_m_AJ_lemma} together with the new standard brownian motion under $\tilde{\mathbb{P}}$ , \(\widetilde{B}_{t}=B_{t}+\int_0^t\lambda_s ds.\)
Define the new process \(\widetilde{W}\) by
\vspace{-.2cm}
\[\widetilde{W}_t =  \Sigma  \widetilde{B}_t + \sqrt{I-\Sigma^\T \Sigma} B^{\perp}_t = W_{t}+\int_0^t\Sigma\lambda_s ds,\]
Notice that, by the 
Girsanov theorem, \(\widetilde{B}\) and \(\widetilde{W}\) are standard Wiener processes under the measure $\tilde{\mathbb{P}}$.
As in the one dimensional case (see e.g. \cite{HanWong2020b}), the Ansatz
\vspace{-.2cm} 
\begin{equation}\label{eq:ansatz2}
J_t^{\alpha}=-\frac{1}{\gamma}\exp\big(-\gamma e^{\int_{t}^{T} r(s)ds}X^\alpha_t\big)\Big(\E^{\tilde{\mathbb{P}}}\Big[\operatorname{exp}\big(-\frac{1-\rho^2}{2}\int_t^T \left| \lambda_s \right|^2 ds\big)|\mathcal{F}_t\Big]\Big)^{\frac{1}{1-\rho^2}}=:-\frac{1}{\gamma}\exp\big(-\gamma e^{\int_{t}^{T} r(s)ds}X^\alpha_t\big)\Gamma_t.
\end{equation}
is inspired by the martingale distortion transformation in a non-Markovian setting in \cite{Teh04}  for the case of the
exponential utility function, where the distortion
power arises from simple H\"older-type inequalities.
Here we still use the short notation $J_t^{\alpha}$ for $J_t(X_t^{\alpha},V_t)$. 
\vspace{-.2cm}
\begin{Proposition}
	\label{prop:ExpoExp_riccati_1}Assume that there exists  a solution $\psi \in C([0,T],\R^d)$ to the inhomogeneous Riccati-Volterra equation \eqref{eq:RiccatiExpopsi1}-\eqref{eq:RiccatiExpopsi2} below:
	\vspace{-.3cm}
	\begin{align}
		\psi^i(t)&= \int_0^t K_i(t-s)\big(-\frac{ \theta_i^2}{2}  + F_i(T-s,\psi(s))\big) ds,  \label{eq:RiccatiExpopsi1} \\
		F_i(s,\psi) &= -\rho \theta_i \nu_i \varsigma^i(s) \psi^i + (D^\top \psi)_i + \frac {\nu_i^2} 2 (1-\rho^2) (\varsigma^i(s)\psi^i)^2, \quad i=1,\ldots,d, \label{eq:RiccatiExpopsi2}
	\end{align} 
	\vspace{-.2cm}
	Then, 
	\vspace{-.3cm}
	\begin{equation}\label{eq:GammaExpo}
		\Gamma_t = \exp\Big( \sum_{i=1}^d\int_t^T  \big(-\frac{ \theta_i^2}{2}  +  F_i(s,\psi(T-s))\big) \tilde{g}^i_t(s) ds \Big).
	\end{equation}
	where $\tilde{g}=(\tilde{g}^1,\ldots,\tilde{g}^d)^\T$ is the $\R^d$-valued process \((\tilde{g}_t(s))_{t\leq s}\) denoting the adjusted conditional $\tilde{\mathbb{P}}$-expected variance.
	Moreover,  $\Gamma_t$ is essentially bounded. Specifically, $0 < \Gamma_t \leq 1$, $\forall \; t \in [0, T]$, $\p$-$\as$.
	Let $\Lambda$ be defined as 
	\begin{equation}
		\Lambda_t^i =  \nu_i\varsigma^i(t) \psi^i(T-t) \sqrt{V^i_t}, \quad i=1,\ldots,d, \quad 0 \leq t \leq T, 
	\end{equation}
	Moreover, $\left(\Gamma, \Lambda\right)\in\mathbb{S}^{\infty}_{\F}([0,T], \R) \times L^2_{\F}([0,T], \R^d)$ and
	\vspace{-.3cm}
	\begin{align}
		d\Gamma_t &= \;  \Gamma_t \Big(\sum_{i=1}^d \Big(\frac{ \theta_i^2}{2}  + \frac {\nu_i^2} 2  \rho^2 (\varsigma^i(t)\psi^i(T-t))^2 \Big) V^i_t  + \sum_{i=1}^d \psi^i(T-t)\nu_i \varsigma^i(t) \sqrt{V^i_t}d\widetilde{W}^i_t \Big)\\
		&=  \;  \Gamma_t \Big[\frac{1}{2} \big( \left| \lambda_t\right|^2 + \left| \Sigma \Lambda_t \right|^2\big)dt + \Lambda_t^\top d\widetilde{W}_t \Big],\quad \Gamma_T = 1.\label{eq:gamma_Expoheston}
	\end{align}
\end{Proposition}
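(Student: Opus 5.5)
The plan follows the pattern of Proposition~\ref{prop:ExpoPower_riccati_1} and the proof of Proposition~\ref{prop:ExpoPower_riccati_2}: identify $\Gamma$ through the exponential--affine representation of an affine Volterra functional under $\tilde{\mathbb{P}}$, derive its dynamics, and read off the bounds.

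\textbf{Step 1: the dynamics of $V$ under $\tilde{\mathbb{P}}$.} Since $\widetilde W = W + \int_0^\cdot \Sigma\lambda_s\,ds$ and $(\Sigma\lambda_s)_i = \rho\theta_i\sqrt{V^i_s}$, equation~\eqref{VolSqrt_} becomes under $\tilde{\mathbb{P}}$
\[
V_t = g_0(t) + \int_0^t K(t-s)\,d\widetilde Z_s,
\qquad
H_s = D - \rho\,\nu\varsigma(s)\,\diag(\theta),
\]
which is of the form~\eqref{VolSqrt2_}. This is again an inhomogeneous affine Volterra square-root process, now with drift matrix $H_s$.

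\textbf{Step 2: identify the functional.} Write $\Gamma_t = \big(\E^{\tilde{\mathbb{P}}}[\exp(\int_t^T \langle m, V_s\rangle\,ds)\mid\cF_t]\big)^{1/(1-\rho^2)}$ with linear weight $m_i = -\tfrac{1-\rho^2}{2}\theta_i^2$. Apply the exponential--affine representation \cite[Theorem A.4.]{Gnabeyeu2026a} under $\tilde{\mathbb{P}}$ with this weight. This yields a Riccati--Volterra function $\phi$ with
\[
\E^{\tilde{\mathbb{P}}}[\cdots\mid\cF_t] = \exp\Big(\sum_i\int_t^T \big(m_i + \hat F_i(s,\phi(T-s))\big)\,\tilde g^i_t(s)\,ds\Big),
\]
where
\[
\hat F_i(s,\phi) = (H_s^\top\phi)_i + \tfrac{\nu_i^2}{2}\,(\varsigma^i(s)\phi^i)^2 .
\]
Setting $\psi = \phi/(1-\rho^2)$ and dividing the exponent by $1-\rho^2$ turns the driver into exactly $-\theta_i^2/2 + F_i$ of~\eqref{eq:RiccatiExpopsi2}: the linear term gives $-\rho\theta_i\nu_i\varsigma^i\psi^i + (D^\top\psi)_i$, and the quadratic term picks up the factor $(1-\rho^2)$. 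This gives~\eqref{eq:GammaExpo}. For $\rho^2=1$ I would treat the power $1/(1-\rho^2)$ through a limiting argument, or simply exclude that case.

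\textbf{Step 3: the BSDE.} I would derive the dynamics directly. Set
\[
N_t := \E^{\tilde{\mathbb{P}}}\Big[\exp\Big(\int_0^T\langle m,V_s\rangle\,ds\Big)\,\Big|\,\cF_t\Big],
\]
a positive $\tilde{\mathbb{P}}$-martingale. Its martingale part comes from differentiating $\tilde g_t(s)$ in $t$, using $d_t\tilde g_t(s) = K(s-t)\nu\varsigma(t)\sqrt{\diag(V_t)}\,d\widetilde W_t$. Combined with the Volterra equation for $\phi$, this gives $dN_t = N_t\,(1-\rho^2)\Lambda_t^\top d\widetilde W_t$. Now $\Gamma_t = (N_t e^{-\int_0^t\langle m,V_s\rangle ds})^{1/(1-\rho^2)}$. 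Itô's formula gives drift
\[
-\tfrac{1}{1-\rho^2}\langle m,V_t\rangle + \tfrac12\Big(\tfrac{1}{1-\rho^2}-1\Big)(1-\rho^2)^2\,|\Lambda_t|_{\widetilde W}^2 .
\]
Using $d\langle \widetilde W^i\rangle = dt$ and equicorrelation, this is $\tfrac12|\lambda_t|^2 + \tfrac12\rho^2|\Lambda_t|^2$. Identifying the cross-covariance with $|\Sigma\Lambda_t|^2$ in the equal-$\rho$ setting gives~\eqref{eq:gamma_Expoheston}. Justifying the Itô differentiation of $\int_t^T(\cdot)\,\tilde g_t(s)\,ds$ needs the stochastic Fubini theorem, which is licensed by~\eqref{eq:L^p-supBound} and the boundedness of $\psi$ and $\varsigma$.

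\textbf{Step 4: bounds.} Positivity is immediate. Since $-\tfrac{1-\rho^2}{2}\int_t^T|\lambda_s|^2\,ds\le 0$, the conditional expectation lies in $(0,1]$, so $0<\Gamma_t\le1$ and $\Gamma\in\mathbb{S}^\infty_{\F}$. Finally, $\Lambda\in L^2_{\F}$ follows from the boundedness of $\psi$ and $\varsigma$ together with~\eqref{eq:moments V1}. Lemma~\ref{lm: extended_m_AJ_lemma} guarantees that $\tilde{\mathbb{P}}$ is well defined.

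The main obstacle is Step~3, specifically verifying that the Riccati driver makes the $dt$ terms cancel after the $1/(1-\rho^2)$ rescaling, and correctly handling the quadratic covariation of the correlated components of $\widetilde W$.
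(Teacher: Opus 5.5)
Your proposal is correct and is essentially the paper's proof. It uses the same exponential--affine formula under $\tilde{\mathbb{P}}$ with weight $-\frac{1-\rho^2}{2}\theta\odot\theta$ (your $\hat F_i$ is the paper's $\tilde F_i$), the same rescaling $\psi=\phi/(1-\rho^2)$, stochastic Fubini with the Riccati--Volterra equation for the dynamics, and the bound $0<\Gamma_t\le 1$ from the nonpositive exponent. The only difference is bookkeeping in Step~3: you differentiate the $\tilde{\mathbb{P}}$-martingale $N$ and take the $\frac{1}{1-\rho^2}$-th power, whereas the paper differentiates $G=\log\Gamma$ directly with $d_t\tilde g_t(s)=K(s-t)\,d\widetilde Z_t$, including the drift $K(s-t)H_tV_t\,dt$. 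The paper's route also works verbatim at $\rho^2=1$ once $\Gamma$ is defined by \eqref{eq:GammaExpo}. Note that your displayed It\^o drift drops a factor $\frac{1}{1-\rho^2}$: the coefficient is $\frac12\cdot\frac{1}{1-\rho^2}\big(\frac{1}{1-\rho^2}-1\big)(1-\rho^2)^2=\frac{\rho^2}{2}$, which is what your final expression correctly uses.
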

\noindent{\bf Remark} 
Assume that $K$ satisfies the Assumption~\ref{assump:kernelVolterra}. As $1-\rho^2 \geq 0$, then~\cite[Theorem A.2.]{Gnabeyeu2026a} provides the existence of a unique global continuous solution  on \([0,T]\) to ~\eqref{eq:RiccatiExpopsi1}--~\eqref{eq:RiccatiExpopsi2}.
The main result we provide for this case is the following:
\vspace{-.1cm}
\begin{Theorem}\label{Thm:ExpoExp_riccati_1}
	Let \( \psi \) be the unique, continuous solution of the inhomogeneous Ricatti-Volterra equation  	~\eqref{eq:RiccatiExpopsi1}-~\eqref{eq:RiccatiExpopsi2}
	on the interval \( [0, T] \), such that Assumption~\ref{assm:gen} is in force. Then \( J^\alpha_t \) in~\eqref{eq:ansatz2} satisfies the martingale optimality principle for \( t \in [0, T] \), and the optimal portfolio strategy \( \alpha^* \) is given by
	\vspace{-.2cm}
	\begin{align}
		\label{Eq:alpha_Expo*1}
		\c^*_t &= \frac1\gamma  e^{-\int_{t}^{T} r(s)ds} \left( \lambda_t +  \Sigma \Lambda_t \right) = \frac1\gamma  e^{-\int_{t}^{T} r(s)ds}\sqrt{\diag(V_t)} \left( \theta + \Sigma \nu \varsigma(t) \psi(T - t) \right), \quad 0 \leq t \leq T \\
		&= \;  \Big(\frac1\gamma  e^{-\int_{t}^{T} r(s)ds}  \big(\theta_i + \rho\nu_i\varsigma^i(t) \psi^i(T-t) \big) \sqrt{V_t^i}   \Big)_{1 \leq i \leq d}, \quad 0 \leq t \leq T.\label{Eq:alpha_Expo*2}
	\end{align}
	\vspace{-.2cm}
	Moreover, \(X^{\alpha^*}\) satisfies Equation~\eqref{eq:uniformInt}
	and $\alpha^*$ is admissible.
\end{Theorem}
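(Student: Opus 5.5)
We verify the three conditions of the martingale optimality principle (Definition~\ref{def:Martopt}) for the family $J^\alpha_t=-\frac1\gamma\exp\big(-\gamma e^{\int_t^T r(s)ds}X^\alpha_t\big)\Gamma_t$. Condition (1) holds because $\Gamma_T=1$. Condition (2) holds because $J^\alpha_0=-\frac1\gamma e^{-\gamma e^{\int_0^Tr}x_0}\Gamma_0$ and $\Gamma_0$ is deterministic given $V_0$ by \eqref{eq:GammaExpo}. The substance is condition (3).

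\emph{Dynamics of $J^\alpha$.} Set $Y_t:=e^{\int_t^T r(s)ds}X^\alpha_t$. From \eqref{eq:wealth}, the $r$-terms cancel, so
\[
dY_t=e^{\int_t^T r}\big(\alpha_t^\top\lambda_t\,dt+\alpha_t^\top dB_t\big).
\]
By Proposition~\ref{prop:ExpoExp_riccati_1}, and rewriting $d\widetilde W_t=dW_t+\Sigma\lambda_t\,dt$ with $d\langle W,B\rangle_t=\Sigma\,dt$, we have
\[
d\Gamma_t=\Gamma_t\Big[\big(\tfrac12|\lambda_t|^2+\tfrac12|\Sigma\Lambda_t|^2+\Lambda_t^\top\Sigma\lambda_t\big)dt+\Lambda_t^\top dW_t\Big].
\]
Here $\Sigma$ is diagonal with entries $\rho$ in the degenerate case. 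Apply It\^o's product formula to $e^{-\gamma Y}\Gamma$. Writing $a_t:=\gamma e^{\int_t^T r}\alpha_t$, a direct computation gives
\[
\frac{dJ^\alpha_t}{J^\alpha_t}=\tfrac12\big|a_t-(\lambda_t+\Sigma\Lambda_t)\big|^2dt+dN_t,
\]
with $dN_t=-a_t^\top dB_t+\Lambda_t^\top dW_t$. Completing the square uses $|\lambda+\Sigma\Lambda|^2=|\lambda|^2+2\Lambda^\top\Sigma\lambda+|\Sigma\Lambda|^2$ and the cross-variation term $-a^\top\Sigma\Lambda$, with $\Sigma$ symmetric. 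Since $J^\alpha<0$, the drift of $J^\alpha$ is $\le0$, and it vanishes exactly when $a_t=\lambda_t+\Sigma\Lambda_t$. This identifies $\alpha^*$ as in \eqref{Eq:alpha_Expo*1}--\eqref{Eq:alpha_Expo*2}.

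\emph{Supermartingale property for general $\alpha\in\mathcal A$.} The previous step gives $J^\alpha=J^\alpha_0\,\mathcal E(N)\exp\big(\int_0^\cdot\tfrac12|a-\lambda-\Sigma\Lambda|^2ds\big)$, so $J^\alpha$ is a local supermartingale. Take a localizing sequence $\tau_n\uparrow T$ for the local martingale part; then $\E[J^\alpha_{t\wedge\tau_n}\mid\mathcal F_s]\le J^\alpha_{s\wedge\tau_n}$. To pass to the limit we need uniform integrability of $\{J^\alpha_\tau\}$. This follows because $0<\Gamma\le1$ (Proposition~\ref{prop:ExpoExp_riccati_1}), so $|J^\alpha_\tau|\le\frac1\gamma\exp(-\gamma e^{\int_\tau^Tr}X_\tau)$, and this family is uniformly integrable by Definition~\ref{Def:adm2}(c)/\eqref{eq:uniformInt}. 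Hence $J^\alpha$ is a true supermartingale.

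\emph{Martingale property and admissibility of $\alpha^*$.} This is the main obstacle. For $\alpha^*$ the drift vanishes, so $J^{\alpha^*}=J_0\,\mathcal E(N^*)$ with $dN^*_t=-(\lambda_t+\Sigma\Lambda_t)^\top dB_t+\Lambda_t^\top dW_t$. The integrand is linear in $\sqrt{V^i_t}$ with bounded deterministic coefficients, because $\psi$ is continuous on $[0,T]$ and $\varsigma$ is bounded. So Lemma~\ref{lm: extended_m_AJ_lemma}, applied after decomposing $W$ into $B$ and $B^\perp$, shows that $\mathcal E(N^*)$ is a true martingale. For admissibility we must check \eqref{eq:uniformInt}, i.e.\ bound $\E[\exp(-p\gamma Y^*_\tau)]$ uniformly in $\tau$. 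Write
\[
\exp(-p\gamma Y^*_\tau)=\Big(\frac{J^{\alpha^*}_\tau}{\Gamma_\tau}\Big)^p\gamma^p
\le C\,\mathcal E(N^*)_\tau^p\,\Gamma_\tau^{-p}.
\]
Then $\mathcal E(N^*)^p=\mathcal E(pN^*)\exp\big(\tfrac{p(p-1)}2\langle N^*\rangle\big)$, and we apply H\"older's inequality to separate the factors. The terms $\exp(c\int|\lambda|^2+|\Lambda|^2)$ are controlled by \eqref{eq:assumption_novikov} with the constant $a(p)$ of \eqref{eq:constap}; this is exactly where the $8p^2-2p$ entry of $a(p)$ is designed to enter. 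The factor $\Gamma_\tau^{-p}$ is handled via the representation \eqref{eq:GammaExpo}: by Jensen, $\Gamma_t\ge\exp\big(-\frac12\E^{\tilde\P}[\int_t^T|\lambda|^2\mid\mathcal F_t]\big)$, and $\Gamma^{-1}$ is bounded in $L^q$ using the same exponential moment. Finally, $\E\int_0^T|\alpha^*_s|^2ds<\infty$ follows from \eqref{eq:moments V1}. With conditions (1)--(3) established, the comparison displayed after Definition~\ref{def:Martopt} yields the optimality of $\alpha^*$.
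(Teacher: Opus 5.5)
Your verification of conditions (1)--(3) matches the paper's appendix proof of Theorem~\ref{Thm:ExpoExp_riccati_1}. It uses the same It\^o product computation, with drift factor $\tfrac12|a_t-(\lambda_t+\Sigma\Lambda_t)|^2=D_t(\alpha_t)\ge 0$. It uses the same localisation plus uniform integrability from Definition~\ref{Def:adm2}(c) and $0<\Gamma\le 1$. It also makes the same appeal to Lemma~\ref{lm: extended_m_AJ_lemma}, which applies directly with $g_1=-1$, $g_2=1$, $\kappa=1$, so no decomposition of $W$ is needed.

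You genuinely depart from the paper in the admissibility bound \eqref{eq:uniformInt}. The paper (admissibility step of the proof of Theorem~\ref{Thm:ExpoUtilityGeneral}, to which it defers) works only with the explicit wealth \eqref{eq:wealthProcess}. It adds and subtracts $p^2\gamma^2\int_0^\tau e^{2\int_s^T r(u)du}|\alpha^*_s|^2ds$ and splits by Cauchy--Schwarz into two factors. The first is $\E[\mathcal E(-2p\gamma\int_0^{\cdot}e^{\int_s^T r(u)du}\alpha_s^{*\top}dB_s)_\tau]\le 1$, by optional sampling and uniformly in $\tau$. The second is an exponential moment of $\int_0^T(|\lambda_s|^2+|\Lambda_s|^2)ds$, covered by \eqref{eq:assumption_novikov}. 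No information on $\Gamma$ is needed.

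Your identity $e^{-p\gamma Y^*_\tau}=(\gamma|J^{\alpha^*}_0|)^p\,\mathcal E(N^*)_\tau^p\,\Gamma_\tau^{-p}$ is exact and recycles $J^{\alpha^*}=J^{\alpha^*}_0\mathcal E(N^*)$. The price is that you need a lower bound on $\Gamma$. The one you give is a $\tilde{\P}$-conditional expectation, whereas \eqref{eq:uniformInt} is a $\P$-expectation. Passing from one to the other requires controlling $d\P/d\tilde{\P}$ between $\tau$ and $T$, and the $\tilde{\P}$-representation degenerates when $\rho^2=1$.

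The simpler fix is to use the $\P$-dynamics you already derived, which coincide with \eqref{eq:gamma_ExpTilheston}. Exactly as in the proof of Proposition~\ref{prop:ExpoExp_riccati_2}, this gives $\Gamma_t=\E[\exp(-\frac12\int_t^T|\lambda_s+\Sigma\Lambda_s|^2ds)\mid\cF_t]$. Applying Jensen twice then yields $\E[\Gamma_\tau^{-q}]\le\E[\exp(\frac q2\int_0^T|\lambda_s+\Sigma\Lambda_s|^2ds)]$, uniformly in $\tau$. Combined with $\langle N^*\rangle_T=\int_0^T(|\lambda_s|^2+|\Lambda_s|^2-|\Sigma\Lambda_s|^2)ds$ and one Cauchy--Schwarz step, your route closes for $p$ close to $1$ within \eqref{eq:assumption_novikov}.

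One caveat: the $8p^2-2p$ entry of $a(p)$ is tuned to the paper's split, not yours, so the constants must be re-derived rather than quoted. In short, the paper's argument is self-contained in the wealth process. Yours reuses the martingale structure already established, at the cost of this extra lower bound on $\Gamma$.
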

\noindent  The proof is a straightforward adaption of the arguments from the proof of Theorem~\ref{Thm:ExpoUtilityGeneral} to the particular case of degenerate correlation.
\vspace{-.5cm}
\subsubsection{The general correlation case: A verification argument}\label{general2}
\vspace{-.2cm}
\noindent As developed in the previous section, when the correlation structure is highly degenerate, that is, when 
\( \rho_1 = \ldots = \rho_d \), the martingale distortion approach used by \cite{HanWong2020b} in the one-dimensional case extends naturally to the multivariate setting.
When the correlation structure in~\eqref{eq:correstructureheston} is given by an arbitrary vector 
\((\rho_1,\dots,\rho_d)\in[-1,1]^d\), we rewrite the Riccati--Volterra equations 
\eqref{eq:RiccatiExpopsi1}--\eqref{eq:RiccatiExpopsi2} as
\vspace{-.2cm}
\begin{align}
	\psi^i(t) &= \int_0^t K_i(t-s)\Big(-\frac{\theta_i^2}{2} + F_i(T-s,\psi(s))\Big)\, ds,  \label{eq:RiccatiExpoTilpsi1} \\
	F_i(s,\psi) &= -\theta_i \rho_i \nu_i \varsigma^i(s)\psi^i + (D^\top \psi)_i 
	+ \frac{\nu_i^2}{2}(1-\rho_i^2)\big(\varsigma^i(s)\psi^i\big)^2, 
	\quad i=1,\ldots,d. \label{eq:RiccatiExpoTilpsi2} 
\end{align}
First, note that if $K_i$ satisfies the Assumption~\ref{assump:kernelVolterra} for \(i=1,\ldots,d\). As $1-\rho_i^2 \geq 0$, then~\cite[Theorem A.2.]{Gnabeyeu2026a} provides the existence of a unique global continuous solution  on \([0,T]\) to ~\eqref{eq:RiccatiExpoTilpsi1}--~\eqref{eq:RiccatiExpoTilpsi2}. More details are given in the Remark on Proposition~\ref{prop:ExpoExp_riccati_2}.
\noindent In particular, when \( \rho_1 = \ldots = \rho_d = \rho \), the function \(\psi\) coincides with the unique global solution to the Riccati--Volterra equations \eqref{eq:RiccatiExpopsi1}--\eqref{eq:RiccatiExpopsi2}.
Consequently, to avoid restrictions on the correlation structure, we use a verification approach for the Martingale optimality principle to
solve the optimization problem, thus extending the results obtained in the preveous section to the more general correlation structure.  

\begin{Proposition}
	\label{prop:ExpoExp_riccati_2}
	Assume that there exists  a solution $\psi \in C([0,T],\R^d)$ to the inhomogeneous Riccati-Volterra equation:
	\vspace{-.5cm}
	\begin{align}
		\psi^i(t)&= \int_0^t K_i(t-s)\big(-\frac{ \theta_i^2}{2}  + F_i(T-s,\psi(s))\big) ds, 
		\label{eq:RiccatiExpTilpsi1} \\
		F_i(s,\psi) &=-\theta_i \rho_i \nu_i \varsigma^i(s) \psi^i + (D^\top \psi)_i + \frac {\nu_i^2} 2 (1-\rho^2_i) (\varsigma^i(s)\psi^i)^2, \quad i=1,\ldots,d,
		\label{eq:RiccatiExpTilpsi2}
	\end{align} 
	\vspace{-.2cm}
	Let $\left(\Gamma, \Lambda\right)$ be defined as 
	\vspace{-.2cm}
	\begin{equation}\label{eq:GammaExp}
		\left\{
		\begin{array}{ccl}
			\Gamma_t &=& \exp\Big( \sum_{i=1}^d\int_t^T  \big(-\frac{ \theta_i^2}{2}  +  F_i(s,\psi(T-s))\big) g^i_t(s) ds \Big), \\
			\Lambda_t^i &=&  \nu_i\varsigma^i(t) \psi^i(T-t) \sqrt{V^i_t}, \quad i=1,\ldots,d, \quad 0 \leq t \leq T, 
		\end{array}  
		\right.
	\end{equation}
	where $g$ $=$ $(g^1,\ldots,g^d)^\T$ is given by \eqref{eq:processg} i.e. the $\R^d$-valued process \((g_t(s))_{t\leq s}\)  is defined in~\eqref{eq:processg}.  
	
	\noindent Then, $\left(\Gamma,\Lambda\right)$  is a  $\mathbb{S}^{\infty}_{\F}([0,T], \R) \times L^2_{\F}([0,T], \R^d)$-valued solution to the Riccati BSDE~\eqref{eq:gamma_ExpTilheston} below.
	\vspace{-.1cm}
	\begin{equation}\label{eq:gamma_ExpTilheston}
		\left\{
		\begin{array}{ccl}
			\frac{d\Gamma_t}{\Gamma_t} &=&   \frac{1}{2} \left| \lambda_t + \Sigma \Lambda_t \right|^2\,dt + \Lambda_t^\top dW_t, \\
			\Gamma_T &=&  1. 
		\end{array}  
		\right.
	\end{equation}
	Moreover,  $\Gamma_t$ is essentially bounded. Specifically, $0 < \Gamma_t \leq 1$ for all $ t \in [0, T]$, $\p$-$\as$.
\end{Proposition}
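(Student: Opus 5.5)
We will argue in three steps: derive the BSDE dynamics of $\Gamma$ from the Riccati--Volterra equation, represent $\Gamma$ as a conditional expectation via a true martingale, and read off the bounds $0<\Gamma_t\le 1$ and $\Lambda\in L^2_\F$.

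\textbf{Step 1: dynamics of $\Gamma$.} Set $Y_t:=\log\Gamma_t=\sum_{i}\int_t^T h_i(s)\,g^i_t(s)\,ds$ with $h_i(s):=-\frac{\theta_i^2}{2}+F_i(s,\psi(T-s))$. From \eqref{eq:processg}, for fixed $s$ we have
\[
dg^i_t(s)=K_i(s-t)\,dZ^i_t,\qquad g^i_t(t)=V^i_t .
\]
A stochastic Fubini argument (justified by \eqref{eq:moments V1} and the boundedness of $\varsigma,\psi$) then gives
\[
dY_t=-h\cdot V_t\,dt+\sum_i\Big(\int_t^T h_i(s)K_i(s-t)\,ds\Big)dZ^i_t .
\]
The change of variables $u=T-s$ in \eqref{eq:RiccatiExpTilpsi1} shows that $\int_t^T K_i(s-t)h_i(s)\,ds=\psi^i(T-t)$. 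Writing $dZ^i_t=(D V_t)_i\,dt+\nu_i\varsigma^i(t)\sqrt{V^i_t}\,dW^i_t$, we get
\[
dY_t=\Big(-h(t)\cdot V_t+\psi(T-t)^\top D V_t\Big)dt+\Lambda_t^\top dW_t .
\]
Itô's formula for $\Gamma=e^Y$ contributes $\frac12|\Lambda_t|^2$, using $d\langle W^i\rangle=dt$ and the fact that only diagonal covariances enter because $\Sigma_i$ has a single nonzero entry. It remains to check
\[
-h(t)\cdot V_t+(D^\top\psi)\cdot V_t+\tfrac12|\Lambda_t|^2=\tfrac12|\lambda_t+\Sigma\Lambda_t|^2 .
\]
This is the algebraic identity built into $F_i$:
\[
\tfrac12|\lambda+\Sigma\Lambda|^2=\sum_i V^i\Big(\tfrac{\theta_i^2}{2}+\theta_i\rho_i\nu_i\varsigma^i\psi^i+\tfrac{\rho_i^2}{2}\nu_i^2(\varsigma^i\psi^i)^2\Big),
\]
and the $(1-\rho_i^2)$ term in $F_i$ is exactly what makes the quadratic pieces match. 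This is the same computation as in Proposition~\ref{prop:ExpoPower_riccati_2}, with $\gamma/(1-\gamma)$ replaced by the exponential-utility signs. Together with $\Gamma_T=1$ (since $Y_T=0$), this yields \eqref{eq:gamma_ExpTilheston}.

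\textbf{Step 2: representation of $\Gamma$.} Consider
\[
\bar M_t:=\Gamma_t\exp\Big(-\tfrac12\int_0^t|\lambda_s+\Sigma\Lambda_s|^2\,ds\Big).
\]
By Step 1, $d\bar M_t=\bar M_t\Lambda_t^\top dW_t$, so $\bar M=\mathcal E\big(\int_0^\cdot\Lambda^\top dW\big)$. Lemma~\ref{lm: extended_m_AJ_lemma}, applied with $g_{1,i}=\nu_i\varsigma^i\psi^i(T-\cdot)$ bounded, shows that $\bar M$ is a true martingale. Since $\Gamma_T=1$,
\[
\Gamma_t=\E\Big[\exp\Big(-\tfrac12\int_t^T|\lambda_s+\Sigma\Lambda_s|^2\,ds\Big)\,\Big|\,\mathcal F_t\Big].
\]

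\textbf{Step 3: bounds and integrability.} The conditional expectation in Step 2 is of a variable in $(0,1]$, so $0<\Gamma_t\le1$ a.s.; positivity also follows directly from $\Gamma=e^Y$ being an exponential. Hence $\Gamma\in\mathbb S^\infty_\F$. For $\Lambda$, we have
\[
\E\int_0^T|\Lambda_s|^2\,ds\le C\,\E\int_0^T\sum_i V^i_s\,ds<\infty
\]
by boundedness of $\varsigma,\psi$ and \eqref{eq:moments V1}, so $\Lambda\in L^2_\F$.

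\textbf{Main obstacle.} The main difficulty is Step 1. Justifying the stochastic Fubini interchange and the differentiation of $\int_t^T h_i(s)g^i_t(s)\,ds$ in $t$ requires care, since $K_i$ may be singular at $0$, so $g_t(s)$ is not a semimartingale in $s$. I would handle this as in Proposition~\ref{prop:ExpoPower_riccati_1}: write $Y_t=Y_0+\int_0^t(\cdots)$ directly from \eqref{eq:processg} and swap integrals using $K_i\in L^2$ together with the moment bounds. By contrast, Step 2 is immediate once the true-martingale property is available.
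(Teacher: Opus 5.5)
Your proposal is correct and follows the paper's own proof. The BSDE dynamics come from the stochastic-Fubini computation of Proposition~\ref{prop:ExpoExp_riccati_1}; your explicit check that $-h\cdot V+(D^\top\psi)\cdot V+\tfrac12|\Lambda|^2=\tfrac12|\lambda+\Sigma\Lambda|^2$ is exactly the step the paper leaves implicit. The true-martingale property of $\bar M=\mathcal E(\int_0^\cdot\Lambda^\top dW)$ then gives the conditional-expectation representation, hence $0<\Gamma_t\le 1$, and $\Lambda\in L^2_\F$ follows from \eqref{eq:moments V1}.

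One minor point, shared with the paper: Lemma~\ref{lm: extended_m_AJ_lemma} should be applied with $g_1=0$ and $g_2=1$, since the factor $\nu_i\varsigma^i\psi^i(T-\cdot)$ is already inside $\Lambda$. That lemma also relies on Assumption~\ref{assm:gen}, which is not among the proposition's stated hypotheses.
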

\noindent {\bf Proof: } The proof that $\left(\Gamma, \Lambda\right)$ satisfy~\eqref{eq:gamma_ExpTilheston} is a straightforward adaptation of the arguments from the proof of Proposition~\ref{prop:ExpoExp_riccati_1}. {It remains to show that $0 < \Gamma_t \leq 1$ for all $ t \in [0, T]$, $\p$-$\as$ and $\left(\Gamma, \Lambda\right) \in \mathbb{S}^{\infty}_{\F}([0,T],\R) \times L^2_{\F}([0,T], \R^d)$}.  For this, define the process 
\vspace{-.3cm}
\begin{align*}
	M_t &= \Gamma_t \exp\Big(-\frac12\int_0^t \left| \lambda_s + \Sigma \Lambda_s \right|^2 ds\Big)=  \Gamma_t \exp\Big(-\frac12\int_0^t \sum_{i=1}^d V^i_s \big(\theta_i + \rho_i \nu_i \varsigma^i(s)\psi^i(T-s)\big)^2 ds\Big), \quad t \leq T.
\end{align*}
An application of It\^o's formula combined with the dynamics ~\eqref{eq:gamma_ExpTilheston} shows that $dM_t = M_t  \Lambda_t^\top dW_t$, so that $M$ is a local martingale of the form
\vspace{-.3cm}
\begin{align} 
	M_t &=\;\mathcal E\Big( \int_0^t \Lambda_s^\top dW_s\Big)=\; \mathcal E\Big( \int_0^t \sum_{i=1}^d \nu_i\varsigma^i(s)  \psi^i(T-s)\sqrt{V^i_s}dW^i_s \Big). 
\end{align}
\noindent Since $\psi$ is continuous, it is bounded; likewise, $\varsigma$ is bounded. Therefore a straightforward application of Lemma~\ref{lm: extended_m_AJ_lemma} with $g_{2} = 0$ and $g_{1,i}(s) = \nu_i \varsigma^i(s)\psi^i (T-s) \in L^{\infty}(\R^+,\R)$,  recall~\eqref{eq:moments V1}, yields that the stochastic exponential \(M\) is a true $\P$- martingale. Now, as $\Gamma_T=1$, writing $\E[M_T|\mathcal F_t]=M_t$, we obtain
\vspace{-.3cm}
\begin{equation}\label{eq:ito_gammaExp}
	\Gamma_t = \E \Big[ \exp\Big(-\frac12\int_t^T \sum_{i=1}^d V^i_s \big(\theta_i + \rho_i \nu_i \varsigma^i(s) \psi^i(T-s)\big)^2 ds\Big) \mid \mathcal F_t\Big], \quad t\leq T,
\end{equation}
which ensures that $0<\Gamma_t\leq1$, $\P-a.s.$, since $V_t$ is non-negative ($V\in \mathbb R^d_+$). As for $\Lambda$, it is clear that it belongs to $L^2_{\F}([0,T], \R^d)$ since $\varsigma$ and $\psi$ are bounded, $\psi$ is continuous thus bounded and $\E \Big[\int_0^T  \sum_{i=1}^d V^i_s ds \Big] <  \infty$ by \eqref{eq:moments V1}. This complete the Proof\hfill $\Box$

\medskip
\noindent{\bf Remark:} 
Assume that $K$ satisfies the Assumption~\ref{assump:kernelVolterra}.
Then \cite[Theorem A.2.]{Gnabeyeu2026a} provides the existence of a unique global continuous solution $\psi \in C([0,T],\R^d)$ to ~\eqref{eq:RiccatiExpTilpsi1}--~\eqref{eq:RiccatiExpTilpsi2} (and in particular to ~\eqref{eq:RiccatiExpopsi1}--~\eqref{eq:RiccatiExpopsi2} ) and \(\psi< 0\) for \(t>0\).
\noindent More precisely, as the matrix $D$ in the drift of the volatility process is a diagonal matrix, i.e. $D=-\diag{(\lambda_1,\dots, \lambda_d)}$, for \(i=1,\ldots,d,\) by \cite[Theorem A.2.~$(c)$]{Gnabeyeu2026a}, since $-\frac{\theta_i^2}{2}<0$, $\psi^i \in C([0,T],\R_-)$ is unique global solution to the following Volterra equation 
\vspace{-.2cm}
\begin{equation}\label{eq:comp}
	\chi(t) = \int_0^t K_i(t-s)  \Big( -\frac{\theta_i^2}{2} - \big(\lambda_i + \theta_i \rho_i \nu_i \varsigma^i(T-s)\big) \chi(s)  + \frac {\nu_i^2} 2  ( 1- \rho_i^2)\varsigma^i(T-s)^2\chi(s) ^2\Big)ds, \; t\leq T.
\end{equation}
Combining the component-wise solutions, we finally obtain the unique global solution $\psi$ of the inhomogeneous Ricatti--Volterra Equation~\eqref{eq:RiccatiExpTilpsi1}--~\eqref{eq:RiccatiExpTilpsi1} (and in particular to ~\eqref{eq:RiccatiExpopsi1}--~\eqref{eq:RiccatiExpopsi2} ). 

\smallskip
\noindent Moreover, it follows in this case that the condition \eqref{eq:condtheta} can be made more explicit by bounding $\psi$ with respect to the vector $\theta$.
Indeed setting for \(i=1,\ldots,d,\) \(\bar{\lambda}_i:=\inf_{t\in [0,T]} \big(\lambda_i+\nu_i\rho_i\theta_i\varsigma^i(t)\big)= \lambda_i+\nu_i\rho_i\theta_i \|\varsigma^i\|_\infty {\bold{1}_{\rho_i \leq 0}}, \) (owing to Example~\ref{Ex:FractionalKernel2} for the function \(\varsigma\)) and assuming that \(\bar{\lambda}_i \neq 0\), by \cite[Corollary A.3.]{Gnabeyeu2026a}, we have:
\vspace{-.2cm}
\begin{equation}
	\sup_{t \in [0,T]} |\psi^i(t)| \leq \frac{|\theta_i |^2}{2\bar{\lambda}_i}\int_0^T f_{\bar{\lambda}_i}(s)ds = \frac{|\theta_i |^2}{2\bar{\lambda}_i}(1- R_{\bar{\lambda}_i}(T)), \;i=1,\ldots,d.
\end{equation}
where $R_{\bar{\lambda}_i}$ is the \textit{ $\bar{\lambda}_i$-resolvent} associated to the real-valued kernel $K_i$ and $f_{\bar{\lambda}_i}$ its antiderivative.
Consequently, combining those component-wise estimates, we finally obtain that, a sufficient condition on $\theta$ to ensure \eqref{eq:condtheta} would be 
\begin{equation}
	\theta^2_i\left( 1+(\nu_i\|\varsigma^i\|_\infty\frac{\theta_i}{2\bar{\lambda}_i})^2\big(1- R_{\bar{\lambda}_i}(T)\big)^2 \right) \leq \frac{a}{a(p)}  \quad \text{for all}\quad i=1,\ldots,d.
\end{equation}

\medskip
\noindent
Now, observe that the proposed candidate for the optimal portfolio strategy $\alpha^*$ follows from
\eqref{Eq:alpha_Expo*1}--\eqref{Eq:alpha_Expo*2} and is given by
\vspace{-.3cm}
\begin{equation}\label{eq:optcandExpo}
	\alpha_t^*
	=\frac1\gamma  e^{-\int_{t}^{T} r(s)ds} \left( \lambda_t + \Sigma \Lambda_t \right)
	= \Bigg(\frac1\gamma  e^{-\int_{t}^{T} r(s)ds}
	\big(\theta_i + \rho_i \nu_i \varsigma^i(t)\psi^i(T-t)\big)
	\sqrt{V_t^i}\Bigg)_{1 \leq i \leq d},
	\quad 0 \leq t \leq T.
\end{equation}
We will show directly that this strategy attains the value
$-\frac{1}{\gamma}\exp\left(-\gamma e^{\int_{t}^{T} r(s)ds}x_0\right)\Gamma_0$, and that no other admissible portfolio
strategy can achieve a higher value.
The main verification result for this setting is stated as follows:
\begin{Theorem}\label{Thm:ExpoUtilityGeneral}
	Let \( \psi \) be the unique, continuous solution of the inhomogeneous Ricatti-Volterra equation  	~\eqref{eq:RiccatiExpTilpsi1}-~\eqref{eq:RiccatiExpTilpsi2}
	on the interval \( [0, T] \), such that Assumption~\ref{assm:gen} is in force.
	Then for $t\in [0,T]$, an optimal investment strategy $(\alpha_t^*)_{t\in [0,T]}$ for the Merton portfolio problem~\eqref{Expobj} is given by 
	\vspace{-.2cm}
	\begin{align}
		\label{Eq:alpha_GeneralExpo*}
		\c^*_t &= \frac1\gamma  e^{-\int_{t}^{T} r(s)ds} \left( \lambda_t +  \Sigma \Lambda_t \right) = \frac1\gamma  e^{-\int_{t}^{T} r(s)ds} \sqrt{\diag(V_t)} \left( \theta + \Sigma \nu \varsigma(t) \psi(T - t) \right), \quad 0 \leq t \leq T \\
		&= \;  \Big(\frac1\gamma  e^{-\int_{t}^{T} r(s)ds}  \big(\theta_i + \rho_i\nu_i\varsigma^i(t) \psi^i(T-t) \big) \sqrt{V_t^i}   \Big)_{1 \leq i \leq d}, \quad 0 \leq t \leq T.\label{Eq:alpha_GeneralExp*2}
	\end{align}
	Moreover,
	\vspace{-.2cm}
	\begin{equation}\label{eq:boundExpo}
		\sup_{\tau \in [0, T]}\E\Big[ \exp\left(- p\gamma e^{ \int^T_\tau r(u) du} X^{\alpha^*}_\tau\right) \Big] < \infty, \text{ for some } p > 1.
	\end{equation}
	and $\alpha^*$ is admissible. The value function defined in~\eqref{Expobj} can be written as
	\vspace{-.2cm}
	\begin{equation}\label{eq:Optvalue_Exp}
		\mathcal{V}(x_0,V_0)=-\frac{1}{\gamma}\exp\left(-\gamma e^{\int_{0}^{T} r(s)ds}x_0\right)\exp\Big( \sum_{i=1}^d\int_0^T  \big(-\frac{ \theta_i^2}{2}  +  F_i(s,\psi(T-s))\big) g^i_0(s) ds \Big).
	\end{equation}
\end{Theorem}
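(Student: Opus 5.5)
We apply the martingale optimality principle (Definition~\ref{def:Martopt}) to the family
\[
J^\alpha_t := -\frac{1}{\gamma}\exp\Big(-\gamma e^{\int_t^T r(s)ds}X^\alpha_t\Big)\Gamma_t ,\qquad t\in[0,T],
\]
where $(\Gamma,\Lambda)$ is the solution of the Riccati BSDE~\eqref{eq:gamma_ExpTilheston} from Proposition~\ref{prop:ExpoExp_riccati_2}. Since $\Gamma_T=1$, we have $J^\alpha_T=-\frac1\gamma e^{-\gamma X^\alpha_T}$. Moreover, $J^\alpha_0=-\frac1\gamma\exp(-\gamma e^{\int_0^Tr}x_0)\Gamma_0$ does not depend on $\alpha$, and by~\eqref{eq:GammaExp} at $t=0$ it is exactly the right-hand side of~\eqref{eq:Optvalue_Exp}. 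It therefore remains to show that $J^\alpha$ is a supermartingale for every $\alpha\in\mathcal A$ and a true martingale for $\alpha^*$.

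\textbf{Step 1 (Itô computation).} Set $Y_t:=e^{\int_t^T r}X^\alpha_t$. From~\eqref{eq:wealth}, $dY_t=e^{\int_t^Tr}(\alpha_t^\top\lambda_t\,dt+\alpha_t^\top dB_t)$, so the discount cancels the $r$-term. Write $\beta_t:=\gamma e^{\int_t^Tr}\alpha_t$. Applying Itô's product rule with $d\Gamma_t/\Gamma_t=\frac12|\lambda_t+\Sigma\Lambda_t|^2dt+\Lambda_t^\top dW_t$ and $d\langle B,W\rangle_t=\Sigma\,dt$ (coordinatewise $d\langle B^i,W^i\rangle=\rho_i dt$), we obtain
\[
\frac{dJ^\alpha_t}{J^\alpha_t}=\Big(-\beta_t^\top\lambda_t+\tfrac12|\beta_t|^2+\tfrac12|\lambda_t+\Sigma\Lambda_t|^2-\beta_t^\top\Sigma\Lambda_t\Big)dt+dN_t
=\tfrac12\big|\beta_t-(\lambda_t+\Sigma\Lambda_t)\big|^2dt+dN_t ,
\]
where $dN_t=-\beta_t^\top dB_t+\Lambda_t^\top dW_t$. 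Since $J^\alpha<0$, the drift of $J^\alpha$ is nonpositive and vanishes exactly when $\beta=\lambda+\Sigma\Lambda$, that is, when $\alpha=\alpha^*$ as in~\eqref{Eq:alpha_GeneralExpo*}. Consequently $J^\alpha_t=J^\alpha_0\,\mathcal E(N)_t\exp\big(\frac12\int_0^t|\beta_s-\beta^*_s|^2ds\big)$, and $J^\alpha$ is a local supermartingale. For $\alpha^*$ it is a local martingale, namely $J^\alpha_0\mathcal E(N^*)$.

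\textbf{Step 2 (from local to true).} For general $\alpha\in\mathcal A$, take a localizing sequence $\tau_n\uparrow T$. Then $\E[J^\alpha_{t\wedge\tau_n}\mid\mathcal F_s]\le J^\alpha_{s\wedge\tau_n}$. Since $0<\Gamma\le1$ (Proposition~\ref{prop:ExpoExp_riccati_2}), we have $|J^\alpha_\tau|\le\frac1\gamma\exp(-\gamma e^{\int_\tau^Tr}X^\alpha_\tau)$ for any stopping time $\tau$. This family is uniformly integrable by admissibility condition $(c)$ of Definition~\ref{Def:adm2}, so we can pass to the limit $n\to\infty$ and obtain the supermartingale property. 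For $\alpha^*$, the integrand $N^*$ is $\int(-\beta^*)^\top dB+\Lambda^\top dW$, with $\beta^*$ and $\Lambda$ both of the form (bounded deterministic function)$\times\sqrt{V^i}$ because $\psi$ and $\varsigma$ are bounded. Lemma~\ref{lm: extended_m_AJ_lemma} (with $g_1,g_2$ bounded deterministic) then yields that $\mathcal E(N^*)$ is a true martingale.

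\textbf{Step 3 (admissibility of $\alpha^*$ and~\eqref{eq:boundExpo}), the main obstacle.} By construction $\alpha^*\in L^2_\F$, using~\eqref{eq:moments V1}. For the uniform exponential moment, write
\[
\exp(-p\gamma e^{\int_\tau^Tr}X^*_\tau)=\exp(-p\gamma e^{\int_0^Tr}x_0)\exp\Big(-p\int_0^\tau\beta^{*\top}dB-p\int_0^\tau\beta^{*\top}\lambda\,ds\Big).
\]
Split this as $\mathcal E(-2p\beta^*\cdot B)_\tau^{1/2}\times\exp\big(\int_0^\tau(p^2|\beta^*|^2-p\beta^{*\top}\lambda)ds\big)$ and apply Cauchy--Schwarz. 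The first factor has expectation at most $1$ by optional stopping for the supermartingale $\mathcal E$. The second factor is bounded by $\exp\big(C_p\int_0^T(|\lambda|^2+|\Lambda|^2)ds\big)$ with $C_p\le a(p)$ of the form~\eqref{eq:constap}, because $|\beta^*|^2\le2(1+|\Sigma|^2)(|\lambda|^2+|\Lambda|^2)$. This bound is uniform in $\tau$, and the expectation is finite by the Novikov-type condition~\eqref{eq:assumption_novikov} from Assumption~\ref{assm:gen}. The delicate part is matching the constants to $a(p)$, which is where the $8p^2-2p$ term enters. Together with Steps 1--2, this gives $\E[U(X^\alpha_T)]\le J_0=\E[U(X^{\alpha^*}_T)]$ and hence~\eqref{eq:Optvalue_Exp}.
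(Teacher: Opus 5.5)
Your proposal is correct and follows essentially the paper's proof: the same family $J^\alpha_t=-\frac1\gamma\exp\big(-\gamma e^{\int_t^T r(s)ds}X^\alpha_t\big)\Gamma_t$ and the same It\^o drift (your completed square $\frac12|\beta_t-(\lambda_t+\Sigma\Lambda_t)|^2$ is exactly the paper's $D_t(\alpha_t)$), together with localization plus the uniform integrability from Definition~\ref{Def:adm2}$(c)$ and $0<\Gamma\le 1$, Lemma~\ref{lm: extended_m_AJ_lemma} for the martingale property at $\alpha^*$, and a Cauchy--Schwarz split against $\mathcal E(-2p\beta^*\cdot B)$ for \eqref{eq:boundExpo}. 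Your two small deviations are valid and slightly cleaner: you bound $\E[\mathcal E(\cdot)_\tau]\le 1$ by optional stopping of a positive supermartingale rather than via Novikov, and you obtain $\alpha^*\in L^2_\F$ directly from \eqref{eq:moments V1}.
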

\noindent For the sake of brevity, yet without compromising self-containment, we present a sketch of the proof, deferring the detailed proof to Section~\ref{sect:proofMresult}.

\smallskip
\noindent \emph{Sketch of Proof:}
We adopt a different verification approach from that of Section~\ref{general}. In the spirit of~\cite{HuImkellerMueller2005}, we consider the family of stochastic processes $\{ J^\alpha_t \}_{ t \in [0, T]}$, $\alpha \in \cA$  defined for every \( t\in[0,T]\) by \begin{equation}\label{eq:ansatzExpo}J_t^\alpha:=
	-\frac{1}{\gamma}\exp(-\gamma x_0  e^{ \int^T_0 r(s) ds}) \exp\!\Big(-\gamma \int_0^t e^{ \int^T_s r(u) du}  \big(\alpha_s^\T dB_s + \alpha_s^\T \lambda_s \mathrm{d}s \big) \Big)  \Gamma_t = -\frac{ \Gamma_t}{\gamma}\exp(-\gamma  e^{\int_{t}^{T} r(s)ds}X^\alpha_t)
\end{equation}
where the pair \((\Gamma,\Lambda)\) satisfies the Riccati Backward SDE (BSDE)~\eqref{eq:GammaExp}--\eqref{eq:gamma_ExpTilheston} under \(\P\).
We verify that $\{ J^\alpha_t \}_{ t \in [0, T]}$, $\alpha \in \cA$ satisfies the Martingale optimality principle in the sense of Definition~\ref{def:Martopt}.

\subsection{Optimal strategy for the logarithmic utility maximization problem}\label{subsect:logarithmic}
\noindent To complete the spectrum of important utility functions,
in this section we shall consider logarithmic utility. Here, we assume the model~\eqref{eq:stocks}--~\eqref{eq:hestonS} for $S_t$, that the dynamics of the controlled wealth process is given by equations~\eqref{Eq:wealth}--\eqref{eq:wealthPowerSol} and the utility function is of the form \(U(x):=\log(x)\).

\noindent Let the set of all admissible investment strategies, still denoted as $\cA$ be given by
	\begin{equation}\label{eq:AdmStrat}
		\mathcal{A} = \left\{ \alpha=(\alpha_t)_{t \in [0,T]} \in {L^{2}_{\F}([0,T], \R^d)} 
		\right\}.
	\end{equation}
\noindent Note that, for any \(\alpha\in\mathcal A\), Equation \eqref{Eq:wealth} has a positive solution \((X_t^{\alpha})_{t\in[0,T]}\). We want to solve the Merton logarithmic utility optimization case, i.e. our aim is to find the value function defined in~\eqref{eq:value0} for the logarithmic utility function such that
\begin{equation}\label{obj_log}
	\begin{aligned}
		\hspace{1cm}	\mathcal{V}(x_0,v_\infty)&:= \operatorname*{\,sup}_{\alpha(\cdot) \in \mathcal A} \E\Big[\log(X_T^{\alpha}) \Big],\qquad X_0^{\alpha}:=x_0>0, \quad v_\infty:= \mathbb{E}[V_0]\\
		&= \log(x_0) + \sup_{\alpha(\cdot) \in \mathcal{A}} 
		\mathbb{E}\Bigl[\int_0^T \bigl(r(s) + \alpha_s^\top \lambda_s 
		- \tfrac{1}{2}\left|\alpha_s\right|^2\bigr)ds
		+ \int_0^T \alpha_s^\top\,dB_s\Bigr].
	\end{aligned}
\end{equation}
\begin{Theorem}\label{Thm:LogUtilityGeneral}
	Let \(T>0\) be fixed and that Assumption~\ref{assm:gen} is in force.
	Then for $t\in [0,T]$, an optimal investment strategy $(\alpha_t^*)_{t\in [0,T]}$ for the Merton portfolio problem~\eqref{obj_log} is given by 
	\begin{align}
		\label{Eq:alpha_GeneralLog*}
		\c^*_t &= \lambda_t = \;  \Big(\theta_i  \sqrt{V_t^i}   \Big)_{1 \leq i \leq d}, \quad 0 \leq t \leq T.
	\end{align}
	Moreover $\alpha^*$ is admissible and the value function or maximum expected utility defined in~\eqref{obj_log} satisfies
	\begin{equation}
		\mathcal{V}(x_0,v_\infty):=\log(x_0) +\int_{0}^{T} r(s)ds+ \frac{T}{2}\sum_{i=1}^d \theta_i^2\,v_\infty^i,\quad v_\infty^i:= \mathbb{E}[V_0^i] = \frac{1-a_i}{1-a_i\varphi_\infty^i}\frac{\mu_\infty^i}{\lambda_i}\text{~for~} i=1,\ldots,d.
	\end{equation}
\end{Theorem}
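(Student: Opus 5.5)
The log utility case reduces to a pointwise maximization. For any $\alpha\in\mathcal A$, \eqref{eq:wealthPowerSol} gives
\[
\log X_T^\alpha=\log x_0+\int_0^T\bigl(r(s)+\alpha_s^\top\lambda_s-\tfrac12|\alpha_s|^2\bigr)ds+\int_0^T\alpha_s^\top dB_s .
\]
Since $\alpha\in L^2_{\F}([0,T],\R^d)$, the stochastic integral $\int_0^\cdot\alpha_s^\top dB_s$ is a true square-integrable martingale, so its expectation is zero.

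Next, $\lambda\in L^2_{\F}$, because $\E\int_0^T|\lambda_s|^2ds=\sum_i\theta_i^2\int_0^T\E[V^i_s]ds<\infty$ by \eqref{eq:moments V1}. Hence $\alpha^\top\lambda$ is integrable and the expectation splits term by term. Completing the square,
\[
\alpha_s^\top\lambda_s-\tfrac12|\alpha_s|^2=\tfrac12|\lambda_s|^2-\tfrac12|\alpha_s-\lambda_s|^2 ,
\]
so that
\[
\E[\log X_T^\alpha]=\log x_0+\int_0^Tr(s)ds+\tfrac12\E\int_0^T|\lambda_s|^2ds-\tfrac12\E\int_0^T|\alpha_s-\lambda_s|^2ds .
\]
The last term is nonpositive and vanishes if and only if $\alpha=\lambda$, $d\P\otimes dt$-a.e. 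So $\alpha^*=\lambda$ is optimal. It is admissible because $\lambda\in L^2_{\F}$ and it is progressively measurable, $V$ being continuous and adapted. This is also the martingale optimality principle in its simplest form, with $J^\alpha_t$ given by the conditional expectation of the same expression, but the direct argument suffices.

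It remains to evaluate $\tfrac12\sum_i\theta_i^2\int_0^T\E[V^i_s]ds$. We are in the fake stationary regime: \eqref{eq:CondMean_} holds and $\varsigma=\varsigma_{D,c}$. By Proposition~\ref{prop:timeDen_}, $\E[V^i_t]=\E[V^i_0]=v^i_\infty$ for all $t$. One can also check this directly from \eqref{eq:ConstMean_}: take expectations, drop the stochastic integral (a true martingale by the moment bound and the boundedness of $\varsigma$), and use $\E[V^i_0-v^i_\infty]=0$. Fubini then gives $\int_0^T\E[V^i_s]ds=Tv^i_\infty$, which yields the stated value. The only delicate points are the integrability justifications, namely the true-martingale property of the stochastic integrals and the Fubini interchange. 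Both follow from \eqref{eq:moments V1} and the boundedness of $\varsigma$, so I expect no real obstacle. Assumption~\ref{assm:gen} is more than enough and is used only to guarantee these moment bounds.
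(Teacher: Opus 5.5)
Your proof is correct and reaches the same optimizer and value as the paper, but by a more elementary route.

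\textbf{The paper's route.} It embeds the log case in its martingale optimality framework. It introduces the auxiliary BSDE $d\Gamma_t=-\bigl(r(t)+\tfrac12|\lambda_t|^2\bigr)dt+\Lambda_t^\top dW_t$, $\Gamma_T=0$, and appeals to classical BSDE theory for existence. It then sets $J^\alpha_t=\log x_0+\int_0^t\bigl(r(s)+\alpha_s^\top\lambda_s-\tfrac12|\alpha_s|^2\bigr)ds+\int_0^t\alpha_s^\top dB_s+\Gamma_t$ and gets $\alpha^*=\lambda$ by pointwise maximization of the drift.

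\textbf{Your route.} You complete the square directly in $\E[\log X^\alpha_T]$. Since $\Gamma_t=\E\bigl[\int_t^T(r(s)+\tfrac12|\lambda_s|^2)ds\mid\cF_t\bigr]$, this is just the unconditional version of the same computation, as you remark.

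\textbf{What your approach buys.} It needs only $\alpha,\lambda\in L^2_{\F}$: the stochastic integral is then a true martingale, and $\alpha^\top\lambda$ is integrable by Cauchy--Schwarz. It does not really use Assumption~\ref{assm:gen} beyond the first moments already supplied by \eqref{eq:moments V1}. It gives uniqueness of the optimizer $d\P\otimes dt$-a.e.\ for free. It also sidesteps solving a BSDE driven by $W$ alone in a filtration $\F$ that the paper allows to be strictly larger than the Brownian one, where existence via martingale representation is not automatic.

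\textbf{What the paper's approach buys.} It is uniform with the power and exponential cases and provides a dynamic value process $J^{\alpha^*}_t$ at intermediate times.

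The final step is identical in both arguments: $\int_0^T\E[V^i_s]ds=Tv^i_\infty$, by the fake stationarity in Proposition~\ref{prop:timeDen_}.
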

	\section{Numerical experiments:The fake stationary rough Heston volatility}\label{Sec:Num}
	\vspace{-.2cm} 
	\noindent In this section, we illustrate the results of Section~\ref{Sect:affine} by numerically computing the optimal portfolio strategy for a special case of two-dimensional fake stationary rough heston model model as described in section~\ref{Sec:SolMerton}. We consider a financial market consisting of one risk-free asset and \(d = 2\) risky assets, with an investment horizon of \(T = 1\) year. 	
	To model the roughness of the asset price dynamics, we employ an appropriate integration kernel. We choose a fractional kernel of Remark~\ref{rm:Kernels} and Example~\ref{Ex:FractionalKernel2} of the form:
	\[
	K(t) = \begin{pmatrix}
		\frac{t^{\alpha_1-1}}{\Gamma(\alpha_1)} & 0 \\
		0 & \frac{t^{\alpha_2-1}}{\Gamma(\alpha_2)}
	\end{pmatrix}, \quad 0.4 +\frac12=\alpha_1,\, 0.1 +\frac12=\alpha_2 \in \big( \frac{1}{2}, 1 \big).
	\]
	Here, \( \Gamma(\alpha) \) is the Gamma function, and the parameter \( \alpha \) controls the degree of roughness in the model. 
	
	\medskip
	\noindent 
	Note that, the model is sufficiently rich to capture several well-known stylized facts of financial markets:
	
	\begin{itemize}
		\item Each asset \(S^i\), \(i=1,2\) exhibits stochastic rough volatility driven by the process \(V^i\), with different Hurst indices \(\alpha_i\).
		\item Each stock \(S^i\) is correlated with its own volatility process through the parameter \(\rho_i\) to take into
		account the leverage effect.
	\end{itemize}
	\medskip
	\noindent
	We consider the setting where in Equation~\eqref{VolSqrt_}, the simplified specification
	\(\varphi(t) = I_{2\times2}, \; t \ge 0, \) holds almost surely, in which case the \(\R^d-\) valued mean-reverting function \(\mu\) is constant in time, that is, \(\mu(t) = \mu_0 \in \R^2, \; \forall\,t \ge 0,\) (see, e.g.,~\cite{EGnabeyeu2025}).
	
	\medskip
	\noindent We consider the following estimates for the model parameters \(V_0^i \sim \mathcal{N}(\frac{\mu_0^i}{\lambda_i},v_0^i)\) defined in~\eqref{eq:VolterraVarTime_1} ($\textit{($E_{\lambda_i, c_i}$)}$):
	\[
	c = \begin{pmatrix}
		0.01 \\
		0.03
	\end{pmatrix}, \quad
	\mu_0 = \begin{pmatrix}
		2.0 \\
		2.5
	\end{pmatrix},\;
	D = \begin{pmatrix}
		-0.2 & 0 \\
		0 & -0.6
	\end{pmatrix}, \quad
	\Sigma = \begin{pmatrix}
		-0.7 & 0 \\
		0 & -0.55
	\end{pmatrix}, \;
	\theta = \begin{pmatrix}
		0.1 \\
		0.1 
	\end{pmatrix}, \;
	\nu = \begin{pmatrix}
	0.4 \\
	0.2
	\end{pmatrix}.
	\]
\medskip
\noindent {\bf Remark:}
	In order to numerically implement the optimal strategy \eqref{Eq:alpha_Generalpower*2}--\eqref{Eq:alpha_GeneralExp*2}, one needs to simulate  the non-Markovian process $V$ in Equation~\eqref{VolSqrt_}--~\eqref{VolSqrt2} and to discretize the Riccati-Volterra equation  for $\psi$ in~\eqref{eq:RiccatiPowerTilpsi1}--~\eqref{eq:RiccatiPowerTilpsi2} and~\eqref{eq:RiccatiExpTilpsi1}--~\eqref{eq:RiccatiExpTilpsi2} respectively.\\
 	
\noindent To simulate the Volterra process~\eqref{VolSqrt_}--\eqref{VolSqrt2}, we first rewrite it using the equivalent Wiener--Hopf transform (see, e.g.,~\cite[Propositions~2.8 and~2.2]{EGnabeyeu2025,EGnabeyeuPR2025}). 
We then introduce the $f_\lambda$-integrated discrete time Euler-Maruyama scheme defined by the below equation on the time grid $t_k =t^n_k =\frac{kT}{n}, k=0, \dots, n$ and in the fractional kernel case, namely recursively for \(i=1,2\), $\;\overline V^{i,n}_{0}=V^{i,0}$ and for every $k=1,\ldots,n$, 
{\small  
\begin{equation*}\label{eq:EulerXdisc}
	\overline V^{i,n}_{t_{k}} 
	=  \frac{\mu_0^i}{\lambda_i} + \big(V^{i,0} -\frac{\mu_0^i}{\lambda_i}  \big)R_{\lambda_i}(t_k) +  \frac{\nu_i}{\lambda_i}\sum_{\ell=1}^{k} \varsigma^i(t_{\ell})\sqrt{ \overline{V}_{t_{\ell-1}}^{i,n}} \int_{t_{\ell-1}}^{t_{\ell}} f_{\alpha_i,\lambda_i}(t_{k} -s) \, dW_s = h^i(t_k) + \frac{\nu_i}{\lambda_i}\sum_{\ell=1}^{k} \varsigma^i(t_{\ell})\sqrt{ \overline{V}_{t_{\ell-1}}^{i,n}} I^{i,\ell}_k.
\end{equation*}
}
\noindent where the stochastic integrals $\left(I^{i,\ell}_k= \int_{t_{\ell-1}}^{t_{\ell}} f_{\alpha_i,\lambda_i}(t_{k} -s) \, dW_s \right)$ can be simulated on the discrete  grid \((t^n_k)_{0\leq k\leq n}\) by generating an independent sequence of gaussian vectors \( G^{n,\ell}, \ell=1 \cdots n\) using an extended and stable version of Cholesky decomposition of a well-defined covariance matrix \(C\).
The reader is referred to \cite[Appendix A]{EGnabeyeu2025} for further details about the simulation of the Gaussian stochastic integrals terms in the semi-integrated Euler scheme introduced in this context for Equation~\eqref{VolSqrt_}--~\eqref{VolSqrt2}.
Theoretical guarantees for the convergence of this numerical scheme, as well as the convergence rate are established in~\cite{GnabeyeuPages2026} for more general Kernels and path-dependent coefficients.

\medskip
\noindent 
To numerically solve the two-dimensional Riccati--Volterra system 
\eqref{eq:RiccatiPowerTilpsi1}--\eqref{eq:RiccatiPowerTilpsi2} and 
\eqref{eq:RiccatiExpTilpsi1}--\eqref{eq:RiccatiExpTilpsi2}, 
we employ, as in~\cite{Gnabeyeu2026a} (see also~\cite[Section~5.1]{el2019characteristic}), 
the generalized Adams--Bashforth--Moulton scheme, also known as the \textit{fractional Adams method}. 
This predictor--corrector method, introduced in~\cite{DiethelmFordFreed2002,DiethelmFordFreed2004} 
for fractional ordinary differential equations, comes with proven convergence guarantees 
established in~\cite{LiTao2009}.

\begin{figure}[H]
	\centering
	\includegraphics[width=0.93\linewidth]{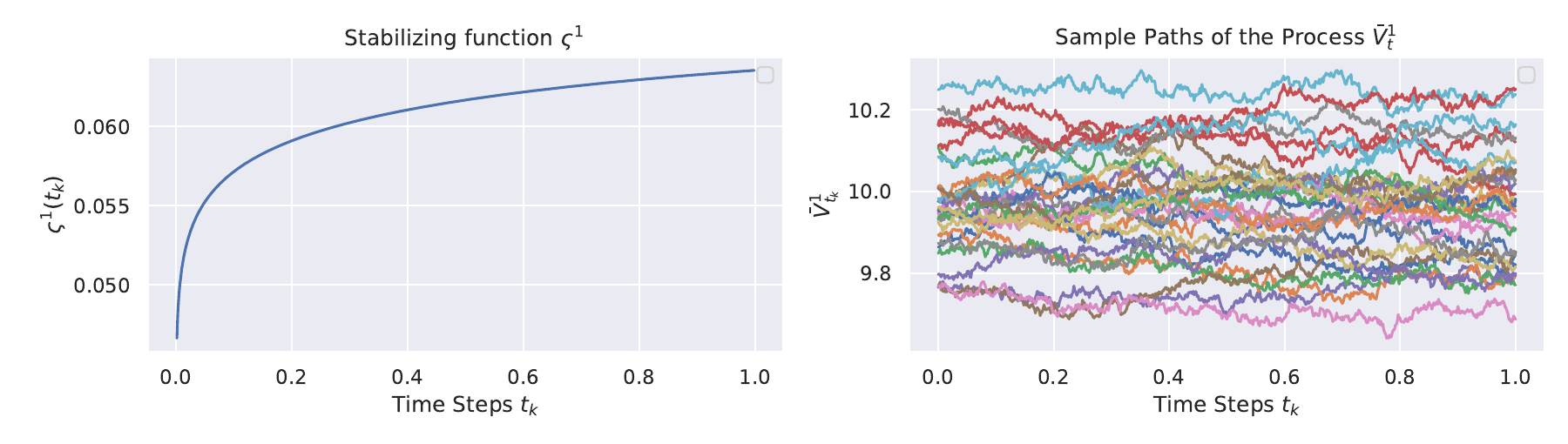
	}
	\caption{\textit{Graph of the stabilizer $ t \to \varsigma_{\alpha_1,\lambda_1,c_1}(t)$ (left) and \(30\) samples paths \( t_k \mapsto V^1_{t_k} \) (right) over the time interval \( [0, 1] \), for the Hurst esponent \( H = 0.4 \), \( c_1 = 0.01 \) and number of time steps \( n = 600 \).}}
\end{figure}\label{fig:stabil_Mean}
\noindent The drift and stabilizing functions $\varsigma$ introduced in Proposition~\ref{prop:timeDen_} and Example~\ref{Ex:FractionalKernel2} are designed to ensure that the volatility processes $(V^1, V^2)$ possess constant marginal means (Figure~\ref{fig:stabil_Mean}) and variances (Figure~\ref{fig:_variance}) over time. This guarantee the invariance of the first two moments under time shifts. 
\begin{figure}[H]
	\centering
	\includegraphics[width=0.93\linewidth]{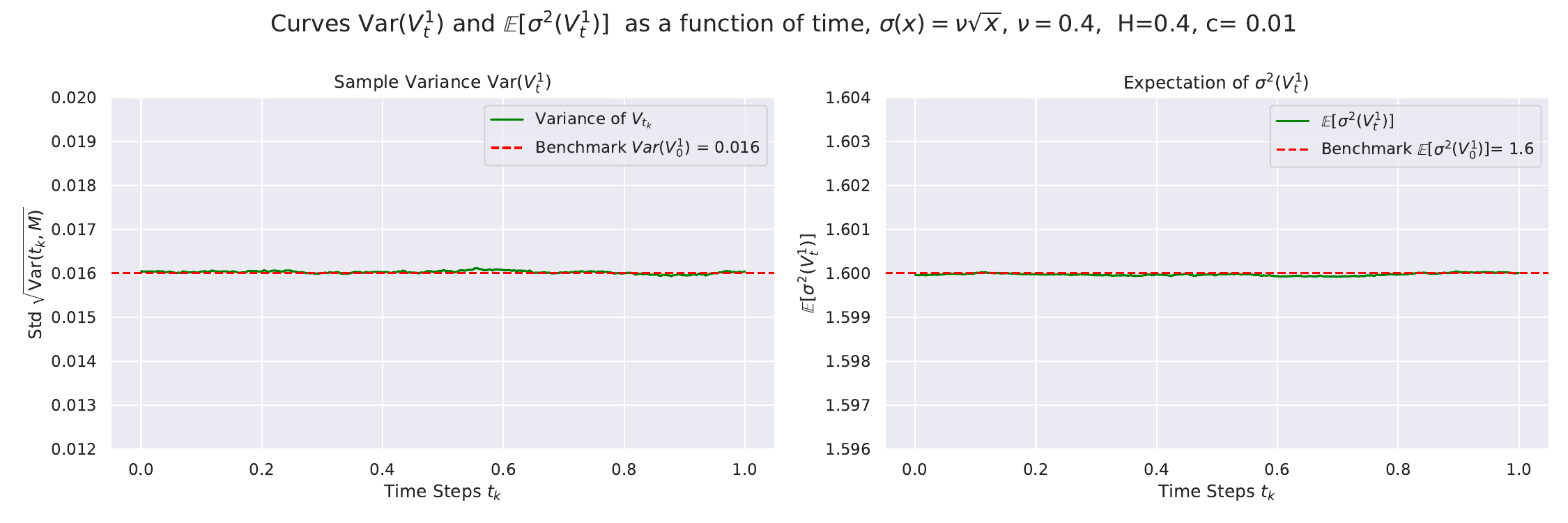}
	\caption{\textit{Graph of \( t_k \mapsto \text{Var}(V^1_{t_k}, M) \) and \( t_k \mapsto \mathbb{E}[\sigma^2(V^1_{t_k},M)] \) over \( [0, 1] \), \( c_1 = 0.01 \) and \( n = 600 \).}}
\end{figure}\label{fig:_variance}

\medskip
\noindent Figures~\ref{Fig:Riccati} below also confirm the Remark on Proposition~\ref{prop:ExpoPower_riccati_2}, that is the claim that $\psi \leq 0$ for every \(\rho_i \in (0,1)\) and every \(i\in\{1,2\}\) in the exponential utility case. The lower the Hurst coefficient $H$, the more negative is $\psi$. In the power utility case however, the left panel of the figure shows that, in accordance with~\cite{HanWong2020b}, $\psi(t) $
is positive for $t > 0 $ and its value becomes larger for a smaller hurst exponent $H$. 
\vspace{-.2cm}
\begin{figure}[H]
	\centering
	\includegraphics[width=0.87\linewidth]{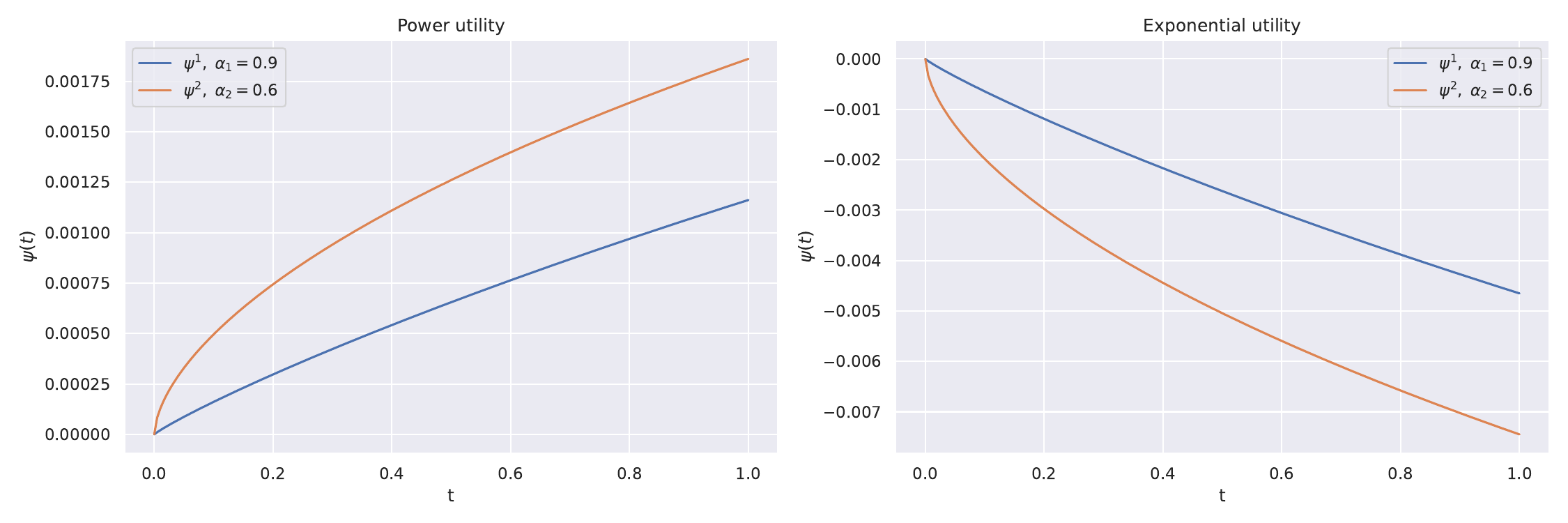}
	\vspace{-.2cm}
	\caption{\textit{Graph of \( t_k \mapsto \psi^1_{t_k} \) and \( t_k \mapsto \psi^2_{t_k} \) over \( [0, 1] \) with the fractional Adams algorithm, \(\gamma=0.2\) and the number of time steps \( n = 200 \) both for Power (left) and Exponential (right) utilities functions.}}\label{Fig:Riccati}
\end{figure}
\vspace{-.3cm}
\noindent Since the optimal strategies $(\alpha^*_t)_{t\in[0,T]}$ given by~\eqref{Eq:alpha_Generalpower*2}--\eqref{Eq:alpha_GeneralExp*2} are stochastic processes, we rather consider the evolution of the optimal vector of amounts invested in each stock, 
that is, the associated deterministic mapping \(t \mapsto \pi_t^*,\)
(recall that $\alpha_t^* = \sigma(V_t)^{\top}\pi_t^*$ with $\sigma(V_t) = \sqrt{\mathrm{diag}(V_t)}$, 
and $\alpha^*$ is given by~\eqref{Eq:alpha_Generalpower*2}--\eqref{Eq:alpha_GeneralExp*2}).

\bigskip
\noindent Plots of Figure~\ref{fig:strategy_gamma} below show that, in addition to the impact of the
roughness of asset volatility on the optimal allocation documented in~\cite{HanWong2020b,AichingerDesmettre2021}, the stabilizing function also affects the optimal portfolio allocation, and hence the hedging demand over time. The form of the control is consistent across different time scales.  The roughness dominates the very short end, while the  stabilizing function 
dictates the long-term behaviour. 

\medskip
\noindent While the rough volatility models capture high-frequency dynamics with remarkable accuracy, their lack of stationarity make them unsuitable for long-term investment analysis. This framework is then also essential for formulating a well-posed infinite-horizon investment problem, time-series analysis and long-term econometric estimation. 

\vspace{-.3cm}
\begin{figure}[H]
	\vspace{-.4cm}
	\centering
	\begin{subfigure}{0.88\linewidth}
		\centering
		\caption*{(a) $\gamma=0.2$ and $T=1$}
		\includegraphics[width=\linewidth]{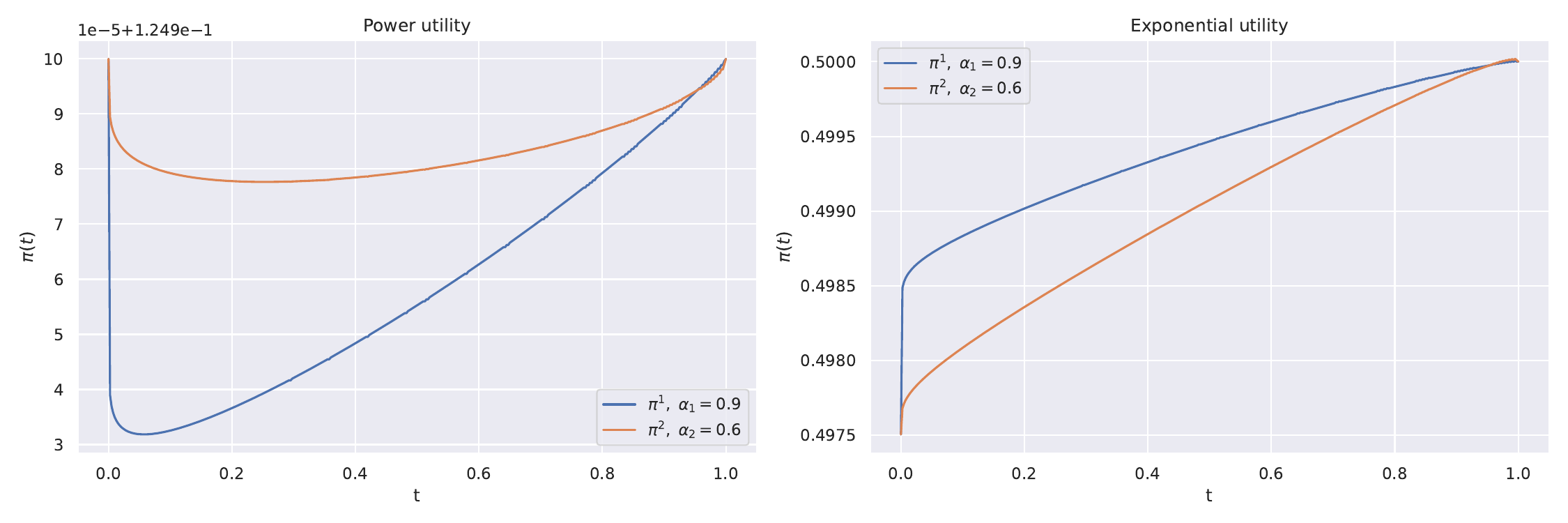}
	\end{subfigure}
\vspace{-.2cm}
	\begin{subfigure}{0.88\linewidth}
		\centering
		\caption*{(b) $\gamma=0.8$ and $T=5$}
		\includegraphics[width=\linewidth]{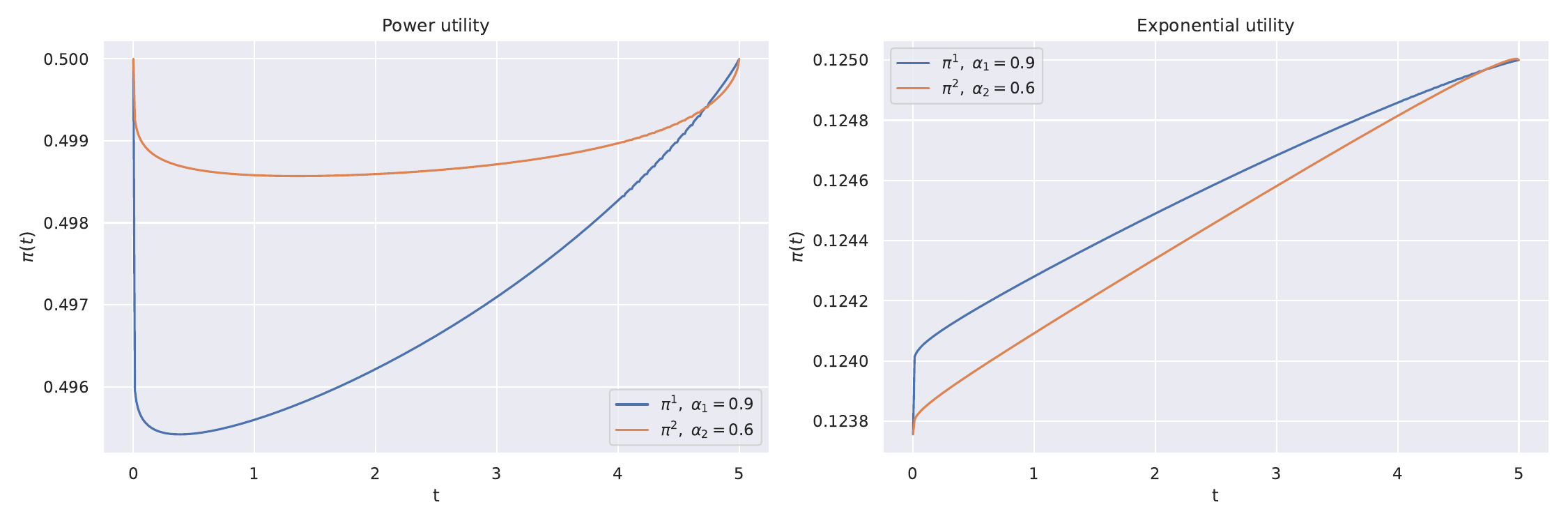}
	\end{subfigure}
	\vspace{-.2cm}
	\caption{\textit{Evolution of the optimal portfolio strategy for different levels of the risk aversion parameter $\gamma$ with \(T\in\{1,5\}\) for both the Power (left) and Exponential (right) utilities functions.}}
	\label{fig:strategy_gamma}
\end{figure}
\vspace{-.3cm}

\noindent 
Our illustrations further reveals that, the lower the investor's risk aversion, the smaller his optimal allocation in the power utility case. Additionally, the vector of amounts invested in each stock coincides at both the beginning and the end of the time horizon, as shown in the left panels of Figures~\ref{fig:strategy_gamma}
(Since $\varsigma^i(0)\psi^i(T)=\varsigma^i(T)\psi^i(0)=0$ for $i \in \{1,2\}$).

\noindent In contrast, the exponential utility suggests a completely different perspective: the optimal allocation becomes more moderated as the volatility paths become rougher and risk aversion increases.

 
 \noindent Economically, in line with~\cite{HanWong2020b}, these results indicate that distinct forms of risk aversion, captured by concave utility functions, display varying degrees of sensitivity in investment demand to the roughness of volatility.

\vspace{-.4cm}
\section{Proofs of the main results}\label{sect:proofMresult}
\vspace{-.3cm}
\noindent {$\rhd$ {\em Preliminaries}. As a first preliminary, we state the following Lemma which is an extension to the inhomogeneous setting~\eqref{VolSqrt_} of the result from \cite[Lemma~B.3]{AichingerDesmettre2021}, which is an enhancement of \cite[Appendix C]{AbiJaberMillerPham2021}. However, the proof does not rely on similar arguments, but rather on Novikov's type condition~\eqref{eq:assumption_novikov}. 
	\begin{Lemma}[Martingale property of stochastic exponentials]\label{lm: extended_m_AJ_lemma}
		Let $g_1$ and $g_2$ be two deterministic \(\R-\)valued bounded processes such that $g_1,g_2\in L^{\infty}([0,T],\R)$.
		Let $\left(\lambda, \Lambda\right) \in {L^{2}_{\F}([0,T], \R^d)\times L^{2}_{\F}([0,T], \R^d)}$ be two \(\R^d-\)valued stochastic processes, defined in~\eqref{eq:stocks} and~\eqref{eq:Lambda} respectively. 
		Let $B$, $W$ be two $d$-dimensional Wiener processes and \(\Sigma\) their corellation matrix,
		then under assumption~\ref{assm:gen}, for any \(\kappa\geq0\) the local martingale
	\begin{align*} Z_t := \mathcal E\Big(\int_0^tg_1(s) (\lambda_s + \kappa\Sigma\Lambda_s)^\T dB_s + g_2(s) \Lambda_s^\top dW_s \Big) 
	\end{align*} is a true martingale. 
	\end{Lemma}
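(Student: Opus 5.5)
We will verify Novikov's criterion for $Z$ directly; this is the route the paper announces, relying on the Novikov-type condition \eqref{eq:assumption_novikov} rather than on the localization arguments of \cite{AbiJaberMillerPham2021}. Write $Z=\mathcal E(N)$ with
\[
N_t=\int_0^t g_1(s)(\lambda_s+\kappa\Sigma\Lambda_s)^\top dB_s+\int_0^t g_2(s)\Lambda_s^\top dW_s .
\]
It then suffices to show $\E\big[\exp\big(\tfrac12\langle N\rangle_T\big)\big]<\infty$.

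\textbf{Bounding the quadratic variation.} We use the correlation structure \eqref{eq:correstructureheston}, which gives $d\langle W^i,B^j\rangle_t=\Sigma_{ji}\,dt$, hence $d\langle W,B\rangle_t=\Sigma^\top dt$. Note also that $d\langle W^i,W^j\rangle_t=0$ for $i\neq j$, since the components $B^i$, $B^{\perp,i}$ are independent across $i$. Therefore $d\langle W\rangle_t=I\,dt$, and
\[
d\langle N\rangle_t=\Big(g_1^2|\lambda_t+\kappa\Sigma\Lambda_t|^2+g_2^2|\Lambda_t|^2+2g_1g_2(\lambda_t+\kappa\Sigma\Lambda_t)^\top\Sigma\Lambda_t\Big)dt .
\]
By Cauchy--Schwarz and the elementary inequality $2xy\le x^2+y^2$, this is bounded by
\[
C\big(\|g_1\|_\infty,\|g_2\|_\infty,\kappa\big)\,(1+|\Sigma|^2)\big(|\lambda_t|^2+|\Lambda_t|^2\big)\,dt,
\]
with a constant $C$ that is explicit and deterministic.

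\textbf{Main obstacle: the size of the constant.} The condition \eqref{eq:assumption_novikov} only provides exponential integrability with the specific constant $a(p)$. The constant $\tfrac12 C(1+|\Sigma|^2)$ obtained above may exceed $a(p)$ when $\kappa$ or the sup-norms of $g_1,g_2$ are large. I would handle this in two steps.

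\emph{First attempt.} Use the freedom in Assumption~\ref{assm:gen}: $p$ is ``sufficiently large'', and $a(p)\ge p(1+|\Sigma|^2)$ grows with $p$. Choosing $p$ so that $a(p)\ge \tfrac12 C(1+|\Sigma|^2)$ makes Novikov's condition follow immediately from \eqref{eq:assumption_novikov}. This is exactly how the constant was matched in the proof of Proposition~\ref{prop:ExpoPower_riccati_2}.

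\emph{Fallback.} If $p$ must be viewed as fixed, apply a local Novikov argument. Partition $[0,T]$ into subintervals $[t_{k-1},t_k]$ of small mesh. On each subinterval one needs $\E\big[\exp\big(\tfrac12\int_{t_{k-1}}^{t_k}d\langle N\rangle\big)\big]<\infty$. The exponential moments of $\int_{t_{k-1}}^{t_k}\sum_i V^i_s\,ds$ can be obtained from the exponential-affine representation \cite[Theorem A.4]{Gnabeyeu2026a} with measure $m(ds)=c\,\mathbf 1_d\,\mathbf 1_{[t_{k-1},t_k]}(s)\,ds$. The associated Riccati--Volterra equation has a continuous solution on short horizons for any $c$, which is the analogue of \eqref{eq:CondExpoRepreA}. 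The subinterval estimates are then chained via the tower property, using $\E[Z_{t_k}/Z_{t_{k-1}}\mid\mathcal F_{t_{k-1}}]=1$, in the manner of the classical proof of the martingale property of affine stochastic exponentials. The delicate point is that the short-horizon Riccati solution must exist uniformly in the starting point $t_{k-1}$; this follows from the boundedness of $\varsigma$ and the local integrability of $K_i$ given by Assumption~\ref{assump:kernelVolterra}.

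\textbf{Conclusion.} Once Novikov's criterion holds (globally, or on each piece of the partition), $Z$ is a true martingale, with $\E[Z_T]=1$.
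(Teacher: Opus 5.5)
Your first attempt is the paper's proof. The paper bounds $\langle N\rangle_T$, using $d\langle B,W\rangle_t=\Sigma\,dt$ and the AM--GM inequality, by $4C(1+|\Sigma|^2)\int_0^T(|\lambda_s|^2+|\Lambda_s|^2)\,ds$ with $C=(1+\kappa^2)\max(1,\|g_1\|_{\infty,T},\|g_2\|_{\infty,T})$, and then invokes \eqref{eq:assumption_novikov} with the enlarged constant $a(2C)=2C(1+|\Sigma|^2)$; like you, it reads Assumption~\ref{assm:gen} as available for whatever $p$ is needed.

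Your fallback (piecewise Novikov on a fine partition) is not used in the paper and is only needed if $p$ is held fixed. In that case you should use conditional exponential moments given $\mathcal F_{t_{k-1}}$, which require only short-horizon Riccati--Volterra solutions. The unconditional moments from the time-$0$ representation involve the Riccati equation over the whole horizon, so they do not give you the short-horizon control you rely on.
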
  
	\noindent \emph{Proof:} 
	Define \(M_t := \int_0^t g_1(s)(\lambda_s+\kappa\Sigma\Lambda_s)^\top dB_s
	+\int_0^t g_2(s)\Lambda_s^\top dW_s.\)
	Then $Z=\mathcal E(M)$ is a positive local martingale.
	We compute its quadratic variation. Using
	$d\langle B,W\rangle_t=\Sigma dt$, we obtain
	\begin{align*}
		\langle M\rangle_t
		&=
		\int_0^t g_1(s)^2|\lambda_s+\kappa\Sigma\Lambda_s|^2 ds
		+\int_0^t g_2(s)^2|\Lambda_s|^2 ds 
		+2\int_0^t g_1(s)g_2(s)(\lambda_s+\kappa\Sigma\Lambda_s)^\top \Sigma\Lambda_s ds\\
		&\leq \int_0^t 2 g_1(s)^2|\lambda_s+\kappa\Sigma\Lambda_s|^2 ds
		+\int_0^t g_2(s)^2\big(1+|\Sigma|^2\big)|\Lambda_s|^2 ds \leq  4 C \big(1+|\Sigma|^2\big)\int_0^t \left(|\lambda_s|^2+|\Lambda_s|^2\right)ds
	\end{align*}
where we use the elementary arithmetic mean-geometric mean (AM-GM) inequality $|ab|\leq (|a|^2+|b|^2)/2 $ and set \(C:=(1+\kappa^2)\cdot\max (1, \|g_1\|_{\infty,T}, \|g_2\|_{\infty,T})\geq 1\) owing to the boundedness of $g_1,g_2$.
	Hence, we have
		\begin{equation}
		\mathbb E\!\Big[\exp\!\Big(\tfrac12 \langle M\rangle_T\Big)\Big] \leq  \E \Big[ \exp\Big({a(2C) \int_0^T \Big( |\lambda_s|^2 + |\Lambda_s|^2\Big)  ds}\Big)    \Big]< \infty,
	\end{equation}
	thanks to condition~\eqref{eq:assumption_novikov} (since  Assumption~\ref{assm:gen} is in force) with constant \(a(2C) =2C \big(1+|\Sigma|^2\big)\). Consequently,
	Novikov's condition holds, so $\mathcal E(M)$ is a true martingale.\qed
	
\medskip
	\noindent 
	We recall here the definition of one dimensional \textit{functional resolvent of the first kind} of an integral kernel \(K\) in the
	terminology of \cite[Def. 5.5.1]{gripenberg1990} (which can also be found e.g. in \cite{abi2019affine}). Given $K \in {L}^1_{loc}(\R_+) $, a function $r$ belonging to ${L}^1_{loc}(\R_+)$ ( measure L on $\R_+$) is called \textit{functional resolvent of the first kind  of $K$} if
	\vspace{-.1cm}
	\begin{equation}\label{eq:Resolvent}
		(K \star r)(t) = (r \star K)(t) = 1,
	\end{equation}	
	for all $t \in \mathbb R_+$.
	\noindent Note that the same notion
	can be defined for higher dimensions and in matrix form as follows: Let $K \in L^1_{\mathrm{loc}}(\mathbb{R}_+,\mathbb{R}^{d\times d})$ and let $R$ be an $\mathbb{R}^{d\times d}$-valued measure on $\mathbb{R}_+$. 
	Then $R$ is called the \textit{resolvent of the first kind} of $K$ if
	\vspace{-.2cm}
	\begin{equation}\label{eq:resolvent_first_kind}
		K * R = R * K \equiv I,
	\end{equation}
	where $I$ denotes the $d$-dimensional identity matrix.
	\begin{Lemma}\label{lemma:Volt-H} Let $f, g: \R_+ \to \R$ be two  locally bounded Borel function, let $K \! \in L^1_{loc}(Leb_{\R_+})$ and let $r\!\in     L^1_{loc}(Leb_{\R_+})$ be its functional resolvent of the first kind. Then,
		\begin{enumerate}
			\item[$(a)$]
			The Volterra Equation  of the first kind
			$\forall\, t\ge 0, \quad f(t) = \int_0^t K(t-s) x(s) ds$
			(also reading $f= K*x$)  has   a solution given by:
			\vspace{-.4cm}
			\begin{equation}\label{eq:Resolventoperator-solu}
				\forall\, t\ge 0, \quad x(t) = \Big(\int_0^t f(t-s) r(ds)\Big)^\prime\; \quad \text{that is,} \quad x(t)=(f \star r)^\prime(t).
			\end{equation}	
			This solution is uniquely defined on $\R_+$ up to  $dt$-$a.e.$ equality.
			
			\item[$(b)$] The resolvent kernel \(r\) satisfies the following convolution identity:
			\vspace{-.3cm}
			\begin{equation}\label{eq:Resolventoperator}
				\forall\, t\ge 0, \quad \int_0^t (f \star r)^\prime(t-s) (K \star g)(s) ds =\int_0^t f(t-s)g(s)ds= (f \star g)(t) 
			\end{equation}
		\end{enumerate}
	\end{Lemma}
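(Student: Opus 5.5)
The plan is to work entirely with the convolution algebra on $\R_+$, using associativity and commutativity of convolution between locally integrable functions and the locally finite measure $r(ds)$. Both properties follow from Fubini's theorem, since $f,g$ are locally bounded Borel and $K,r\in L^1_{loc}$. The one identity doing all the work is the defining property \eqref{eq:Resolvent}, $K\star r=r\star K=1$, combined with the elementary fact that convolving with $\mathbf 1$ amounts to integrating: $(\mathbf 1\star h)(t)=\int_0^t h(s)\,ds$.

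For $(a)$, start from $f=K\star x$ and convolve both sides with $r$:
\[
f\star r = (K\star x)\star r = (K\star r)\star x = \mathbf 1\star x = \int_0^{\cdot} x(s)\,ds .
\]
Hence $f\star r$ is absolutely continuous, vanishes at $0$, and its a.e.\ derivative is $x$. This identifies any solution as $x=(f\star r)'$ and gives uniqueness $dt$-a.e. Indeed, if $K\star y=0$, the same computation yields $\mathbf 1\star y=0$, so $y=0$ a.e.

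For existence, set $x:=(f\star r)'$, assuming $f\star r$ is absolutely continuous. This holds whenever $f$ lies in the range $K\star L^1_{loc}$, which is the situation in which the lemma is applied. Since $(f\star r)(0)=0$, we have $\mathbf 1\star x=f\star r$, and therefore
\[
\mathbf 1\star(K\star x)=K\star(\mathbf 1\star x)=K\star f\star r=f\star(K\star r)=\mathbf 1\star f .
\]
Differentiating gives $K\star x=f$ a.e., and everywhere at continuity points.

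For $(b)$, put $h:=(f\star r)'$. By $(a)$, $h\star K=K\star h=f$ a.e. Then associativity gives
\[
h\star(K\star g)=(h\star K)\star g=f\star g .
\]
The a.e.\ ambiguity in $h\star K=f$ disappears after convolving with the locally bounded function $g$, so the identity \eqref{eq:Resolventoperator} holds for every $t\ge0$.

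The main obstacle is justifying the manipulations rather than the algebra. I would check three things in order: that all the iterated integrals are absolutely convergent on $[0,t]$, so that Fubini applies and $(K\star x)\star r=(K\star r)\star x$ is legitimate when $r$ is only a locally finite measure; the absolute continuity of $f\star r$; and that $\frac{d}{dt}(\mathbf 1\star u)=u$ a.e.\ for $u\in L^1_{loc}$ (Lebesgue differentiation).
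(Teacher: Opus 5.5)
\emph{Verdict: your proposal is correct.} Part $(a)$ is the paper's argument; the paper only says it ``follows by the associativity of the convolution and applying the fundamental theorem of calculus''. For $(b)$ you take a different route.

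\emph{How $(b)$ differs.} The paper works in the Laplace domain. It uses $L_{(f\star r)'}(p)=p\,L_{f\star r}(p)$ and $L_KL_r=1/p$ (from $K\star r=1$) to get $L_{(f\star r)'\star(K\star g)}=L_fL_g=L_{f\star g}$, then invokes injectivity of the Laplace transform. You stay in the time domain: you combine $(f\star r)'\star K=f$ a.e.\ from $(a)$ with the associativity $h\star(K\star g)=(h\star K)\star g$, justified by Tonelli on $[0,t]$ since $g$ is locally bounded.

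\emph{What each route buys.} Your route is purely local and needs no growth conditions. The Laplace computation tacitly requires $f,g,K,r$ to admit Laplace transforms, which local boundedness and local integrability do not guarantee. Your route also gives the identity for every $t$ directly, rather than a.e.\ followed by a continuity upgrade. The Laplace route buys only brevity as a formal calculation.

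\emph{Your extra hypothesis is necessary.} Requiring $f\star r$ to be absolutely continuous is not a weakness of your proof but a needed correction to the statement. The paper's step $L_{(f\star r)'}=p\,L_{f\star r}$ uses this hypothesis silently.

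Without it, existence in $(a)$ fails even for bounded $f$. Take $K=K_\alpha$ with $\alpha\in(\frac12,1)$, so $r=K_{1-\alpha}$, and let $\mu_C$ be the Cantor measure. Then $f:=K\star\mu_C$ is bounded, because $\mu_C(I)\le C|I|^{\log 2/\log 3}$ and $1-\alpha<\log2/\log3$. Yet $f\star r=\mathbf{1}\star\mu_C$ is the Cantor function, which is not absolutely continuous. So $K\star x=f$ has no $L^1_{loc}$ solution. Moreover $(f\star r)'=0$ a.e., so the identity in $(b)$ fails too: with $g\equiv 1$ its right-hand side $\int_0^t f>0$.

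Similarly, ``for all $t$'' in $(a)$ can only mean a.e.\ (or at continuity points) for a general Borel $f$, as you note. In the paper's application, $f=\psi^i=K_i\star(\text{bounded function})$, so your hypothesis is satisfied there.
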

	\vspace{-.2cm}
	\noindent We provide a proof of this classical result in Appendix~\ref{subsect:proofLemmaVolt-H} for the reader's convenience.
	
    \medskip
	\noindent {\bf Proof of Theorem~\ref{Thm:powerUtilityGeneral}:}
	
	\smallskip
	\noindent {\sc Step~1} (\textit{Proof of Admissibility:})
	We begin by proving the admissibility of the candidate in equation~\eqref{eq:optcandPower} for the optimal portfolio strategy $\alpha^*$, and, en route, 
	we deduce the bound in equation~\eqref{eq:boundPower}.
	
	\smallskip
	\noindent In order to show that $\alpha^*$ is admissible, we have to show that (a), (b), (c) of Definition~\ref{Def:adm} hold. Part (a) is true because~\eqref{Eq:wealth} has a unique solution~\eqref{eq:wealthPowerSol} in terms of $(S,V,B)$.   
	For part (b) it suffices to show that \(\mathbb{E}[\operatorname{sup}_{t\in[0,T]}|X_t^{\alpha^*}|^p]<\infty\) for every \(p>0\).
	Inserting the explicit solution of the control problem into~\eqref{Eq:wealth}
	and using  It\^o's lemma (or equivalently into~\eqref{eq:wealthPowerSol}), the unique continuous solution of the controlled wealth process $X^{\alpha^*}$ is given by :
	\vspace{-.3cm}
	\begin{equation}
		X_t^{\alpha^{*}}
		=
		x_0 \exp\!\Big(
		\int_0^t \Big(r(s) + \alpha_s^{*\top} \lambda_s - \tfrac12 |\alpha_s^{*}|^2\Big)\,ds
		+
		\int_0^t \alpha_s^{*\top} \, dB_{s}
		\Big).
		\vspace{-.1cm}
	\end{equation}
	Observe by virtue of~\eqref{eq:assumption_novikov}, that the Dol\'eans-Dade exponential $\mathcal E\left( \int_0^{\cdot}\alpha_s^{*\top} dB_s\right)$ satisfies Novikov's condition, and is therefore a true martingale (see also Lemma~\ref{lm: extended_m_AJ_lemma}). 
	By {virtue of the Cauchy-Schwarz inequality}, and then Doob's maximal inequality in the second line, for every \(p>1\) 
	\vspace{-.2cm}
	\begin{align*}
		&\, \E \Big[ \sup_{ t \in [0, T]} | X^{\alpha^*}_t |^p \Big] \leq x_0^p\E \Big[ \sup_{t \in [0,T]}  \big|  e^{\int_0^t {\left(r(s) + \lambda_s^\T \alpha_s^{*} \right)}ds} \big|^{2p}  \Big]^\frac12 \E \Big[ \sup_{ t \in [0, T]} \Big|  \exp \Big(- \tfrac12 \int^t_0 \left|\alpha_s^{*}\right|^2 ds + \int^t_0  \alpha_s^{*\top} \, dB_{s} \Big) \Big|^{2p}\Big]^\frac12 \\
		& \hspace{2.3cm}\leq x_0^p e^{ p \int_0^Tr(s) ds} \Big(\frac{2p}{2p-1}\Big)^{p} \E \Big[   e^{ 2p \int_0^T | \lambda_s^\T \alpha_s^{*} | ds}  \Big]^{\frac12} \E \Big[ \exp \Big(- p\int^T_0 \left|\alpha_s^{*}\right|^2 ds + 2p \int^T_0 \alpha_s^{*\top} \, dB_{s} \Big) \Big]^{\frac12}.
	\end{align*}
	The first term is finite on the first hand by condition~\eqref{eq:assumption_novikov} with constant \(a(\frac{p}{1-\gamma}) = \frac{p}{1-\gamma}\left(2 + |\Sigma| \right)\)  and owing to the elementary arithmetic mean-geometric mean (AM-GM) inequality $|ab|\leq (|a|^2+|b|^2)/2 $
	\vspace{-.2cm}
	\begin{equation}
		\E \Big[   e^{ 2p  \int_0^T | \lambda_s^\T \alpha_s^{*} | ds}  \Big] \leq  \E \left[ \exp\left({a(\frac{p}{1-\gamma}) \int_0^T \left( |\lambda_s|^2 + |\Lambda_s|^2\right)  ds}\right)    \right]< \infty,
	\end{equation}
	and, on the other hand, the second term is also finite since {by virtue of  Hölder's inequality},
	\vspace{-.2cm}
	\begin{align*}
		\E \Big[ e^{- \int^T_0 p \left|\alpha_s^{*}\right|^2 ds + \int^T_0 2p \alpha_s^{*\top} \, dB_{s} } \Big] 
		& \leq   \left(\E \Big[ e^{(8p^2 - 2p) \int^T_0 \left|\alpha_s^{*}\right|^2 ds} \Big] \right)^{1/2}\left(\E\Big[ e^{- 8 p^2 \int^T_0 \left|\alpha_s^{*}\right|^2 ds + 4p \int^T_0 \alpha_s^{*\top} \, dB_{s} } \Big] \right)^{1/2}\\
		&\leq  \left(   \E \left[ e^{a(\frac{p}{1-\gamma}) \int_0^T \left(|\lambda_s|^2 + |\Lambda_s|^2 \right)  ds} \right]    \right)^{1/2} \times 1 < \infty.
	\end{align*}
	with constant \(a(\frac{p}{1-\gamma}) =2 (8\big(\frac{1}{1-\gamma}\big)^2 {- 2\frac{p}{1-\gamma}}) \left( 1  + |{\Sigma}|^2  \right)\) and where we used Jensen's inequality 
	to bound 
	\vspace{-.2cm}
	\begin{equation}
		\left|\alpha_s^{*}\right|^2 = \big(\frac{1}{1-\gamma}\big)^2|\lambda_s +  \Sigma\Lambda_s|^2 \leq 2\big(\frac{1}{1-\gamma}\big)^2(|\lambda_s|^2 +  |\Sigma\Lambda_s|^2) \leq 2\big(\frac{1}{1-\gamma}\big)^2 (1 + |\Sigma|^2)(|\lambda_s|^2 +  |\Lambda_s|^2).
	\end{equation}
	then noticing that \( \big(\frac{1}{1-\gamma}\big)^2(8p^2 {- 2p})\leq  8\big(\frac{p}{1-\gamma}\big)^2 {- 2\frac{p}{1-\gamma}}\) for all \(p>1, \, \gamma\in (0,1)\), together with condition~\eqref{eq:assumption_novikov} and Novikov's condition to the Dol\'eans-Dade exponential $\mathcal{E}(4p \int_0^{{\cdot}} \alpha_s^{*\T }dB_s)$. 
	This leads to Part (c).
	$\E \Big[ \sup_{ t \in [0, T]} | X^{\alpha^*}_t |^p \Big] < \infty$ is proved. It becomes straightforward to verify $\alpha^*$ is admissible. We are left to prove that $\alpha^* \in L^2_{\F}([0,T], \R^d)$. We have 
	\vspace{-.3cm}
	\begin{align*}
		\E \left[ \int_0^{T} |\c_s^*|^2 ds \right] & { =} \E \left[ \int_0^{T} \big(\frac{1}{1-\gamma}\big)^2|\lambda_s + \Sigma\Lambda_s|^2 ds \right] \leq   \E\left[ 2 \big(\frac{1}{1-\gamma}\big)^2 (1 + |\Sigma|^2) \int_0^T \left(|\lambda_s|^2 +  |\Lambda_s|^2\right) ds \right] < \infty,
	\end{align*}
	where the last term is finite due to condition~\eqref{eq:assumption_novikov} and the inequality $|z|^q\leq c_{q}e^{|z|},\; \forall \, q\geq1$.
	
	\smallskip
	\noindent {\sc Step~2} (\textit{Proof of the equality (1):})
	Let's us consider the Problem~\eqref{obj_power} with an arbitrary strategy $\alpha \in \mathcal{A}$ from Definition~\ref{Def:adm}.
	To ease notations, we set $h_t = \lambda_t + \Sigma \Lambda_t $. For any admissible strategy $\alpha \in \mathcal A$, using the Riccati BSDE~\eqref{eq:gamma_PowerTilheston} in Proposition~\ref{prop:ExpoPower_riccati_2} together with It\^o's lemma yield: 
	\vspace{-.2cm}
	\begin{align*}
		d \big(\frac{(X^\alpha_t)^\gamma}{\gamma}\Gamma_t \big) \;&=\frac{(X^\alpha_t)^\gamma}{\gamma}\Gamma_t \Big( D_t(\alpha_t) dt + \gamma \c_t^\T dB_t +  \Lambda_t^\top dW_t\Big).
	\end{align*}
	where the drift factor takes the form  \(D_t(\alpha_t) =  \frac{\gamma(\gamma-1)}{2}  \alpha_t^\T \alpha_t +\gamma  \alpha_t^\T h_t - \frac{\gamma}{2(1-\gamma)} h_t^\T h_t.\)  
	As a consequence, {using $\Gamma_T=1$}, we get
	\vspace{-.2cm} 
	\begin{equation}\label{eq:PowerUlti}
		\frac{(X^\alpha_T)^\gamma}{\gamma} =\frac{\Gamma_0 x_0^{\gamma}}{\gamma}e^{\int^T_0 D_s(\alpha_s) ds} F_T^\alpha.
	\end{equation}
	where \(F_t^\alpha\) is an $(\mathbb{F},\mathbb{P})$-local martingale as  $(\alpha,\Lambda)$ are in $L^{2,{loc}}_{\mathbb F}([0,T])^2$ (Dool\'{e}ans-Dade exponential) and write
	\vspace{-.2cm}
	\begin{equation*}
		F_t^\alpha= \mathcal E\Big(\int_0^t \gamma \c_s^\T dB_s + \Lambda_s^\top dW_s \Big) =\operatorname{exp}\Big(- \frac{1}{2} \int^t_0 \Big(\gamma^2 \left|\alpha_s\right|^2 + \left|\Lambda_s\right|^2+ 2\gamma \c_s^\T(\Sigma\Lambda_s)\Big)ds +  \gamma \int_0^t \c_s^\T dB_s + \int_0^t\Lambda_s^\top dW_s\Big).
	\end{equation*}
	At this stage, for $\alpha_t = \alpha^*_t$, $F_t^{\alpha^*}$ is a \(\P\)-martingale with expectation \(1 \) by Lemma~\ref{lm: extended_m_AJ_lemma} with $\kappa=1$, 
	 $g_{1}(t)= \frac{\gamma}{1-\gamma} $ and $g_{2}(t)=1$ for every \(t \in [0,T]\). Consequently, inserting the candidate~\eqref{eq:optcandPower} for the optimal strategy \(\alpha^*\) in the above Equation~\eqref{eq:PowerUlti}, and observing that \( D_s(\alpha_s^*) = 0 \; \forall\, s\in [0,T]\), then taking the expectation, it is straightforward that:
	 \vspace{-.2cm}
	\[\E_{x_0,V_0}{\big[\frac{(X^{\alpha^*}_T)^\gamma}{\gamma}  \big]}=\frac{\Gamma_0 x_0^{\gamma}}{\gamma} \E_{x_0,V_0}{\big[F_T^{\alpha^*} \big]}=\frac{ x_0^{\gamma}}{\gamma} \Gamma_0.\]
	where the last inequality comes from the martingality of \(F_T^{\alpha^*}\) and the desired result about the first part of the proof (\textit{equality (1)}) is completed. 
	It remains to show the \textit{inequality (2)} for arbitrary admissible portfolio strategies \(\alpha\in\mathcal{A}\) defined in~\eqref{eq:StrongAdmStratPower}.
	
	\smallskip
	\noindent {\sc Step~3} (\textit{Proof of the inequality (2):})
	Recall that the SDE for the wealth process~\eqref{Eq:wealth} can be solved explicitly so as to admit the representation
	\vspace{-.2cm}
	\begin{equation}
		X_T^\alpha
		=
		X_0^\alpha \exp\!\left(
		\int_0^T \Big(r(s) + \alpha_s^\top \lambda_s - \tfrac12 \left|\alpha_s\right|^2\Big)\,ds
		+
		\int_0^T \alpha_s^\top \, dB_{s}
		\right)\quad \text{with}\quad X_0^\alpha =x_0.
	\end{equation}
	Since by assumption, $\alpha \in \mathcal{A}$ redefined in~\eqref{eq:StrongAdmStratPower}, we can introduce a new probability measure $\Q$ with Radon-Nikodym density  or derivative at \(\mathcal{F}_T\) 
	\vspace{-.2cm}
	\begin{equation}\label{eq:density_Q1}
		{Z}_T:=\frac{d{\Q}}{d\mathbb{P}}|_{\mathcal{F}_T}=\exp\Big(\gamma\int_0^T \alpha_s^{\top} \, dB_{s}-\frac{\gamma^2}{2}\int_0^T \left|\alpha_s\right|^2 ds\Big),
	\end{equation}
	thanks to Novikov's condition. (In fact, since \(\gamma\in (0,1)\), the function \(x \mapsto x^{\gamma^2}\) is concave. Jensen's inequality together with ~\eqref{eq:StrongAdmStratPower} implies that \(\E\big[\exp\big(\frac{\gamma^2}{2}\int_0^T \left|\alpha_s\right|^2 ds\big)\big] < \infty\) so that $(Z_t)_{t\in[0,T]}$ is a martingale.)	 
	By Girsanov's theorem, we may write:
	\vspace{-.2cm}
	\begin{equation*}
		\begin{aligned}
			&\, (x_0)^{-\gamma}\E_{ x_0,V_0}\Big[(X_T^{\alpha})^{\gamma}\Big]
			=\E_{x_0,V_0}^{\Q}\Big[\exp\Big(\int_0^T \gamma r(s) ds+\gamma\int_0^T \big(\alpha_s^{\top} \lambda_s + \frac{\gamma-1}2 \left|\alpha_s\right|^2\big)ds \Big)\Big],\\
			&\hspace{3.6cm}=\exp\Big(\gamma\int_0^T r(s) ds \Big)\E_{x_0,V_0}^{\Q}\Big[\exp\Big(\int_0^T F_s ds \Big)\Big]
		\end{aligned}
	\end{equation*}
	with the real-valued deterministic process \(F_s\)  given by
	\vspace{-.2cm}
	\begin{align*}
		\forall\, s\in[0,T],\quad	F_s^\alpha=\gamma\alpha_s^{\top} \lambda_s + \frac{\gamma(\gamma-1)}2 \left|\alpha_s\right|^2
		=F_s^{\alpha^*}+\gamma \hat{\alpha}_s^{\top} \lambda_s +\gamma(\gamma-1) \alpha_s^{*\top} \hat{\alpha}_s +\frac{\gamma(\gamma-1)}2\left|\hat{\alpha}_s\right|^2
	\end{align*}
	where we write the arbitrary admissible strategy $(\alpha_t)_{t\in[0,T]}$ in terms of the optimal strategy \(\alpha^*_t=\frac{1}{1-\gamma} \left( \lambda_t +  \Sigma \Lambda_t \right)\) for every \(t\in[0,T]\) and some remainder $(\hat{\alpha}_t)_{t\in[0,T]}$ i.e. $\alpha_t=\alpha_t^* +\hat{\alpha}_t$ for every \(t\in[0,T]\).
	Note that, on the first hand, substituting the optimal strategy \(\alpha^*\) in~\eqref{eq:optcandPower} yields \(\forall\, s\in[0,T]\)
	\vspace{-.1cm}
	\begin{equation*}F_s^{\alpha^*} =\gamma\big(\alpha_s^{*\top} \lambda_s + \frac{\gamma-1}2 \left|\alpha_s^*\right|^2\big) =  \frac{\gamma}{2(1-\gamma)} \big(\left| \lambda_s\right|^2 -\left| \Sigma \Lambda_s \right|^2\big) =\frac{\gamma}{2(1-\gamma)}\sum_{i=1}^d \Big( \theta_i^2  - \rho_i^2 \nu_i^2 (\varsigma^i(s)\psi^i(T-s))^2 \Big) V^i_s
	\vspace{-.2cm} 
	\end{equation*}
	and on the other hand, recalling that $\alpha_s := \sqrt{\diag(V_s)}^\top \pi_s$ and defining $\hat{\pi}:=\pi-\pi^*$ so that $\pi_s=\pi_s^* +\hat{\pi}_s$, with $\pi_s^*=\frac{1}{1-\gamma} \left( \theta +  \Sigma \nu\varsigma(s)\psi(T-s) \right)$ so that \(\pi_{s,i}^{*}=\frac{1}{1-\gamma}(\theta_i+\rho_i\nu_i\varsigma^i(s)\psi^i(T-s))\) for \(i=1,\ldots,d\) and consequently, \(\forall\, s\in[0,T],\)
	\vspace{-.2cm}
	\begin{align*}
		F_s^\alpha&=F_s^{\alpha^*}+ \sum_{i=1}^d \Big(\gamma \theta_i \hat{\pi}_{s,i} +\gamma(\gamma-1)  \pi_{s,i}^{*} \hat{\pi}_{s,i} +\frac{\gamma(\gamma-1) }2 \hat{\pi}_{s,i}^2\Big) V^i_s\\
		&= \sum_{i=1}^d \Big(\frac{\gamma}{2(1-\gamma)}\Big( \theta_i^2  - \rho_i^2 \nu_i^2 (\varsigma^i(s)\psi^i(T-s))^2 \Big) - \gamma \rho_i\nu_i\varsigma^i(s)\psi^i(T-s) \hat{\pi}_{s,i} +\frac{\gamma(\gamma-1) }2 \hat{\pi}_{s,i}^2 \Big) V^i_s
	\end{align*}
	Using Equation~\eqref{eq:RiccatiPowerTilpsi1} reading \( \frac{\gamma}{2(1-\gamma)}\theta_i^2 + F_i(s,\psi(T-s)) = (\psi^i \star r^i)^\prime(T-s) \) owing to Lemma~\ref{lemma:Volt-H}~$(a)$ for \(i=1,\ldots,d\), where \(r^i\) the resolvent of the first kind of the integral kernel $K_i$, one gets:
	\vspace{-.2cm}
	\begin{align}
		\forall\, s\in[0,T],\;	F_s&=\sum_{i=1}^d  (\psi^i \star r^i)^\prime(T-s)V^i_s- \Big(F_i(s,\psi(T-s)) + \frac{\gamma\rho_i^2 \nu_i^2}{2(1-\gamma)} (\varsigma^i(s)\psi^i(T-s))^2\Big)V^i_s \label{eq:Fs1}\\
		&\hspace{1.5cm}+ \sum_{i=1}^d \Big( -\gamma \rho_i\nu_i\varsigma^i(s)\psi^i(T-s) \hat{\pi}_{s,i} +\frac{\gamma(\gamma-1) }2 \hat{\pi}_{s,i}^2 \Big) V^i_s\nonumber
	\end{align}
	Under the probability measure $\Q$ defined in \eqref{eq:density_Q1}, the processes \(	\widehat{B}_{t}:=B_{t}-\gamma\int_0^t\alpha_s ds\) and
	\vspace{-.2cm}
	\[ \widehat{W}_t :=  \Sigma  \widehat{B}_t + \sqrt{I-\Sigma^\T \Sigma} B^{\perp}_t = W_{t}-\gamma\int_0^t\Sigma\alpha_s ds = W_{t}-\gamma\int_0^t\Sigma\alpha^*_s ds-\gamma\int_0^t\Sigma\hat{\alpha}_s ds ,
	\]
	are standard Wiener processes thanks to the Cameron--Martin--Girsanov and Girsanov theorem respectively theorem.
	The dynamics of the variance process $V$ under $\Q$ can thus be written as
	\vspace{-.3cm}
	\begin{align*}
		&\, V_t^i = g_0^i(t) + \int_0^t K_i(t-s) H_s^i ds  + \int_0^t K_i(t-s) \nu_i \varsigma^i(s)\sqrt{V_s^i}d\widehat{W}^i_s,\quad i=1,\ldots,d; \quad\text{with}\\
		& H_s^i := (D V_s)_i +\gamma \rho_i \nu_i \varsigma^i(s) (\pi_{s,i}^{*}+ \hat{\pi}_{s,i})V_s^i, \quad i=1,\ldots,d. \\
		&\hspace{.6cm}=  \frac{\gamma}{1-\gamma} \theta_i \rho_i \nu_i \varsigma^i(s)V_s^i + (D V_s)_i + \frac{\gamma}{1-\gamma}(\rho_i\nu_i\varsigma^i(s))^2\psi^i(T-s)V_s^i +\gamma \rho_i \nu_i \varsigma^i(s) \hat{\pi}_{s,i}V_s^i
	\end{align*}
	Now, we insert the dynamics of $V^i$ into the expression
	$\int_t^T(\psi^i \star r^i)^\prime(T-s)V^i_sds$ and simplify using Lemma~\ref{lemma:Volt-H}~$(b)$, where we reads \(H^i\) and the stochastic term \textit{pathwise} (i.e.\ $\omega$ by $\omega$) as for example $g(s)=H^i_s(\omega)$. Its boils down that:
	\vspace{-.3cm}
	\begin{align*}
		&\,\int_t^T\sum_{i=1}^d (\psi^i \star r^i)^\prime(T-s)V^i_s ds = \sum_{i=1}^d \int_t^T(\psi^i \star r^i)^\prime(T-s)g_0^i(s) ds +  \int_t^T \psi^i (T-s)\big(H_s^i ds + \nu_i \varsigma^i(s)\sqrt{V_s^i}d\widehat{W}^i_s\big)\\
		&\hspace{1.5cm}=\sum_{i=1}^d \int_t^T\big(\frac{\gamma}{2(1-\gamma)}\theta_i^2 + F_i(s,\psi(T-s))\big)g_0^i(s) ds + \sum_{i=1}^d \int_t^T \psi^i (T-s)\big(H_s^i ds + \nu_i \varsigma^i(s)\sqrt{V_s^i}d\widehat{W}^i_s\big).
	\end{align*}
	Noting by a change of variables that
	\vspace{-.2cm}
	\begin{equation}\label{eq:chtVar} \sum_{j=1}^d \psi^j(T-s) (D V_s)_j = \sum_{j=1}^d \psi^j(T-s) \sum_{i=1}^d D_{ji} V_s^i = \sum_{i=1}^d V_s^i \sum_{j=1}^d D_{ji} \psi^j(T-s) = \sum_{i=1}^d (D^\top \psi)_i (T-s)V_s^i; 
	\end{equation}
	one gets using Equation~\eqref{eq:RiccatiPowerTilpsi2} 
	\vspace{-.2cm}
	\begin{align*}
		\sum_{i=1}^d \int_0^T \psi^i (T-s)H_s^i ds &= \sum_{i=1}^d \int_0^T \Big(F_i(s,\psi^i (T-s))+\big(\frac{\gamma}{2(1-\gamma)}\rho_i^2- \frac {1} 2\big) \nu_i^2 (\varsigma^i(s)\psi^i (T-s))^2 \Big)V_s^i ds \\
		&+\gamma\sum_{i=1}^d \int_0^T \rho_i \nu_i \varsigma^i(s) \psi^i (T-s) \hat{\pi}_{s,i} V_s^i ds. 
	\end{align*}
	Replacing back all in~\eqref{eq:Fs1}, some terms cancel out and we end up with
	\vspace{-.2cm}
	\begin{align*}
		&\int_0^T F_s \, ds 
		= \sum_{i=1}^d \int_0^T 
		\Big( 
		\frac{\gamma}{2(1-\gamma)} \theta_i^2 
		+ F_i(s,\psi(T-s))
		\Big) g_0^i(s) \, ds + \sum_{i=1}^d \int_0^T \frac{\gamma(\gamma-1)}2 \hat{\pi}_{s,i}^2  V^i_s\, ds   \\
		&\hspace{.02cm}\qquad + \sum_{i=1}^d \int_0^T 
		\Big(
		- \frac{\nu_i^2}{2} 
		\big(\varsigma^i(s)\psi^i(T-s)\big)^2 V_s^i\, ds 
		+ \nu_i \varsigma^i(s)\psi^i(T-s)
		\sqrt{V_s^i}\, d\widehat{W}^i_s
		\Big)= \sum_{i=1}^d \int_0^T \frac{\gamma(\gamma-1)}2 \hat{\pi}_{s,i}^2  V^i_s\, ds\\
		&+ \sum_{i=1}^d \int_0^T 
		\Big( 
		\frac{\gamma}{2(1-\gamma)} \theta_i^2 
		+ F_i(s,\psi(T-s))
		\Big) g_0^i(s) \, ds -\frac12\int_0^T |\Lambda_s|^2\,ds + \int_0^T \Lambda_s^\T d\widehat{W}^i_s.
	\end{align*}
	Hence we obtain
	\vspace{-.3cm}
	\begin{align}
		&\,	x_0^{-\gamma}\E_{x_0,V_0}[{(X_T^{\alpha})^{\gamma}}]=\exp\Big(\gamma\int_0^T r(s) ds \Big)\E_{x_0,V_0}^{\Q}\Big[\exp\Big(\int_0^T F_s^\alpha ds \Big)\Big], \nonumber\\
		&\hspace{3.5cm}= \exp\Big( \gamma\int_0^T r(s) ds +  \sum_{i=1}^d\int_0^T  \big(\frac{\gamma \theta_i^2}{2(1-\gamma)}  + F_i(s,\psi(T-s))\big) g^i_0(s) ds \Big) \nonumber\\
		&\hspace{2cm} \times\E_{x_0,V_0}^{\Q}\Big[\exp\Big(
		-\frac12\int_0^T |\Lambda_s|^2\,ds + \int_0^T \Lambda_s^\T d\widehat{W}^i_s + \sum_{i=1}^d \int_0^T \frac{\gamma(\gamma-1)}2 \hat{\pi}_{s,i}^2  V^i_s\, ds\Big)\Big].\label{eq:expAllExpo1}
	\end{align}
	\smallskip
	\noindent {\bf Remark:} (\textit{Alternative Proof of the equality (1):})	
	At this stage, note that if the admissible strategy \(\alpha\) is optimal, i.e. \(\alpha=\alpha^*\) so that \(\hat{\pi}\equiv0\) and \(\Q\equiv\Q^*\) in ~\eqref{eq:density_Q1}, then Equation~\eqref{eq:expAllExpo1} becomes:
	\vspace{-.2cm}
	\begin{equation*}
		\begin{aligned}
			{}
			& x_0^{-\gamma}\E_{x_0,V_0}[{(X_T^{\alpha^*})^{\gamma}}]=\exp\Big( \gamma\int_0^T r(s) ds +  \sum_{i=1}^d\int_0^T  \big(\frac{\gamma \theta_i^2}{2(1-\gamma)}  + F_i(s,\psi(T-s))\big) g^i_0(s) ds \Big)\\
			&\hspace{3.6cm} \times\E_{x_0,V_0}^{\Q^*}\Big[\exp\Big(-\frac12\int_0^T |\Lambda_s|^2\,ds + \int_0^T \Lambda_s^\T d\widehat{W}^{*i}_s\Big)\Big].
		\end{aligned}
	\end{equation*}
	Note that, the stochastic exponential is a true $\Q^*$- martingale with expectation $1$ by Lemma~\ref{lm: extended_m_AJ_lemma} with $g_{1} = 0$ and $g_{2}(s) = 1$ for every \(s \in [0,T]\). Consequently, we get
	\[
	\E_{x_0,V_0}\Big[{\frac{1}{\gamma}(X_T^{\alpha^*})^{\gamma}}\Big]=\frac{x_0^\gamma}{\gamma}\exp\Big( \gamma\int_0^T r(s) ds +  \sum_{i=1}^d\int_0^T  \big(\frac{\gamma \theta_i^2}{2(1-\gamma)} + F_i(s,\psi(T-s))\big) g^i_0(s) ds \Big).
	\]
	This again completes the first part of the proof (\textit{equality (1)}).
	
	\medskip
	\noindent Coming back to the proof of the \textit{inequality (2)} for arbitrary admissible portfolio strategies \(\alpha\in\mathcal{A}\), from Equation~\eqref{eq:expAllExpo1}, since \(V_s\) is positive
	definite and \(\gamma\in(0,1)\), the term \(\sum_{i=1}^d \int_0^T \frac{\gamma(\gamma-1)}2 \hat{\pi}_{s,i}^2  V^i_s\, ds=\sum_{i=1}^d \int_0^T \frac{\gamma(\gamma-1)}2 \hat{\alpha}_{s,i}^2\, ds\) has to be less than or equal to \(0\) so that
	\vspace{-.2cm}
	\begin{equation*}
		\begin{aligned}
			{}
			& x_0^{-\gamma}\E_{x_0,V_0}[{(X_T^{\alpha})^{\gamma}}]\leq\exp\Big( \gamma\int_0^T r(s) ds +  \sum_{i=1}^d\int_0^T  \big(\frac{\gamma \theta_i^2}{2(1-\gamma)}  + F_i(s,\psi(T-s))\big) g^i_0(s) ds \Big)\\
			&\hspace{2.2cm} \times\E_{x_0,V_0}^{\Q}\Big[\exp\Big(-\frac12\int_0^T |\Lambda_s|^2\,ds + \int_0^T \Lambda_s^\T d\widehat{W}^{i}_s\Big)\Big].
		\end{aligned}
	\end{equation*}
	Since the stochastic exponential
	is a \(\Q\)-martingale with expectation \(1\) still by Lemma~\ref{lm: extended_m_AJ_lemma}, we finally obtain	that  for every \(\alpha\in\mathcal{A}\)
	\vspace{-.2cm}
	\[
	\E_{x_0,V_0}\Big[{\frac{1}{\gamma}(X_T^{\alpha})^{\gamma}}\Big]\leq \frac{x_0^\gamma}{\gamma}\exp\Big( \gamma\int_0^T r(s) ds +  \sum_{i=1}^d\int_0^T  \big(\frac{\gamma \theta_i^2}{2(1-\gamma)} + F_i(s,\psi(T-s))\big) g^i_0(s) ds \Big).
	\]
	which completes the proof.  \hfill $\Box$

	\medskip
	\noindent{\bf Proof of Theorem~\ref{Thm:ExpoUtilityGeneral}}
	\noindent {\sc Step~1} (\textit{Proof of the Martingale optimality principle for \(J^\alpha\).})
	We show that $J_t^{\alpha}$ in~\eqref{eq:ansatzExpo} fulfills the martingale optimality principle~\ref{def:Martopt}. 
	For the first condition, note that $\Gamma_T=1$ and hence $J_T^{\alpha}=-\frac{1}{\gamma}\exp\left(-\gamma  X^\alpha_T\right) = U(X^\alpha_T)$. Since $\Gamma_0$ is a constant independent of $ \alpha \in \cA$, $J^\alpha_0 = -\frac{1}{\gamma}\exp\left(-\gamma  e^{\int_{0}^{T} r(s)ds}x_0\right) \Gamma_0$ is a constant independent of $ \alpha \in \cA$ and thus the second condition is also satisfied.
	In order to show that the third condition is also fulfilled, we apply Itô's formula on $J_t^{\alpha}$ defined in~\eqref{eq:ansatzExpo}. 
	Let's us consider the Problem~\eqref{Expobj} with an arbitrary strategy $\alpha \in \mathcal{A}$.
	To ease notations, we set \(U_t :=-\gamma  e^{\int_{t}^{T} r(s)ds}X^\alpha_t+ \log(\Gamma_t)\) and $h_t := \lambda_t + \Sigma \Lambda_t $. Note that
	 \[J^\alpha_t := -\frac{1}{\gamma}\exp(U_t)\quad\text{and}\quad dJ_{t}^{\c} = J_{t}^{\c} \Big( d U_t + \frac 1 2 d\langle U \rangle_t \Big)\]
	 For any admissible strategy $\alpha \in \mathcal A$, we write by It\^o's lemma:
	\begin{align*}
		dU_t &= \Big(\gamma r(s) e^{\int_{t}^{T} r(s)ds}X^\alpha_t - \gamma e^{\int_{t}^{T} r(s)ds}\big( r(t) X^{\alpha}_t  + \alpha_t^\T \lambda_t \big) \Big) dt - \gamma e^{\int_{t}^{T} r(s)ds} \alpha_t^\T dB_t  + d\log(\Gamma_t) \\
		&=  \big(  - \gamma e^{\int_{t}^{T} r(s)ds} \alpha_t^\T \lambda_t + f(Y_t,\Lambda_t, U_t)-\frac{1}{2} \left| \Lambda_t \right|^2 \big) dt  + \Lambda_t^\top dW_t
		 - \gamma e^{\int_{t}^{T} r(s)ds} \alpha_t^\T dB_t 
	\end{align*}
	where \(f\) is the driver or drift term in Equation~\eqref{eq:gamma_ExpTilheston}. Consequently, by It\^o's product rule, combined with the property of \(\Gamma\) in Equation~\eqref{eq:gamma_ExpTilheston} of Proposition~\ref{prop:ExpoExp_riccati_2}, one may write
	\begin{align*}
		dJ_t^{\c} &= \; J_{t}^{\c} \big( - \gamma e^{\int_{t}^{T} r(s)ds} \alpha_t^\T \lambda_t + f(Y_t,\Lambda_t, U_t)-\frac{1}{2} \left| \Lambda_t \right|^2 + \frac{\gamma^2}{2} e^{\int_{t}^{T} 2r(s)ds} \c^\T_t \c_t -\gamma e^{\int_{t}^{T} r(s)ds} \c^\T_t \big( \Sigma \Lambda_t\big) \\
		&+ \frac{1}{2} \left| \Lambda_t \right|^2 \big) dt + J_{t}^{\c} \big(- \gamma e^{\int_{t}^{T} r(s)ds} \c_t^\T dB_t +   \Lambda_t^\top dW_t \big)= J_{t}^{\c} \Big( D_t(\alpha_t) dt - \gamma  e^{\int_{t}^{T} r(s)ds} \c_t^\T dB_t +  \Lambda_t^\top dW_t \Big).
	\end{align*}
	where the drift factor takes the form: 
	\begin{align*}
		D_t(\alpha) = & \frac{\gamma^2}{2} e^{\int_{t}^{T} 2r(s)ds}  \c^\T \c -\gamma  e^{\int_{t}^{T} r(s)ds}  \c^\T h_t  + \frac{1}{2} h_t^\T h_t.
	\end{align*}
	Differentiating \(D_t(\alpha) \) with respect to \(\alpha\) and checking the second order condition, the maximizer \(\alpha^*_t:=\frac1\gamma  e^{-\int_{t}^{T} r(s)ds} h_t =\frac1\gamma  e^{-\int_{t}^{T} r(s)ds} \left( \lambda_t +   \Sigma \Lambda_t \right)\) for every \(t\in[0,T]\) that is the strategy given by Equation~\eqref{Eq:alpha_GeneralExpo*}.
	Evaluating the drift factor \(D_t \) at \(\alpha^*_t\) show that \(D_t(\alpha^*_t)\) vanishes to \(0\). 
	Note $D_t(\alpha)$ is a quadratic function on $\alpha$ and $\gamma - 1 < 0$. As $D_t(\alpha^*_t) =0$, then $D_t(\alpha_t) \geq 0$ for any admissible strategy \(\alpha\).
	Moreover, solving the stochastic differential equation for $J_t^{\alpha}$ yields
	\begin{align}\label{eq:JExpoU}
	&\hspace{2cm}	J_t^{\alpha}= -\frac{\Gamma_0}{\gamma}\exp\left(-\gamma  e^{\int_{0}^{T} r(s)ds}x_0\right)e^{\int^t_0 D_s(\alpha_s) ds} F_t^\alpha\\
	&\text{where}\hspace{4cm} F_t^\alpha= \mathcal E\Big(\int_0^t -\gamma  e^{\int_{s}^{T} r(u)du} \c_s^\T dB_s +  \Lambda_s^\top dW_s  \Big)\nonumber 
	\end{align}
	Following our assumptions on the admissible strategies in Definition~\ref{Def:adm2} and Proposition~\ref{prop:ExpoExp_riccati_2}, we have that
	$\left(\alpha, \Lambda\right) \in {L^{2,loc}_{\F}([0,T], \R^d)^2}$ and thus, the stochastic exponential $F_t^\alpha$ is a $(\mathbb{F},\mathbb{P})$-local martingale (which follows from the basic properties of the Dool\'{e}ans-Dade exponential). Therefore, there exists a sequence of stopping times $\{\tau_n\}_{n\geq1}$ satisfying $\lim_{n \rightarrow \infty} \tau_n = T$, $\p$-$\as$, such that that $F^\alpha_{t \wedge \tau_n}$ is a positive martingale for every $n$.
	
	\noindent Furthermore, $- \frac{\Gamma_0}{\gamma} \exp\big(- \gamma e^{ \int^T_0 r(u)du} x_0 \big) e^{\int^t_0 D_s(\alpha_s) ds}$ is non-increasing. Therefore, $J^\alpha_{t\wedge \tau_n}$ is a supermartingale. Then for $s \leq t$, $\E[ J^\alpha_{t\wedge \tau_n} | \cF_s] \leq J^\alpha_{s\wedge \tau_n}$. It implies that for any set $ A \in \cF_s$, 
	\begin{equation*}
		\E[ J^\alpha_{t\wedge \tau_n} \id_A] \leq \E[J^\alpha_{s\wedge \tau_n}\id_A], \quad s \leq t.
	\end{equation*}
	for every $n$.  Since $\left(\exp\big(- \gamma e^{ \int^T_{t\wedge \tau_n} r(u) du} X_{t \wedge \tau_n} \big)\right)_{n\geq0}$ is uniformly integrable (Definition~\ref{Def:adm2}~$(c)$) and $\Gamma$ is bounded (see the last claim of Proposition~\ref{prop:ExpoExp_riccati_2}), $\left(J^\alpha_{t\wedge \tau_n} \right)_n$ and $\left(J^\alpha_{s\wedge \tau_n} \right)_n$ are uniformly integrable. Let $n \rightarrow \infty$, then $\E[ J^\alpha_t \id_A] \leq \E[J^\alpha_s \id_A]$. Consequently, we deduce that $J^\alpha$ is a supermartingale  for every arbitrary admissible strategy $\alpha$.
	It remains to show that $J_t^{\alpha^*}$ is a true martingale for the optimal strategy $\alpha^*$ in which case $e^{\int_0^t D_s(\alpha_s^*)ds}=1$ and therefore the above equation~\eqref{eq:JExpoU}, when  inserting the candidate~\eqref{Eq:alpha_GeneralExpo*} for the optimal strategy \(\alpha^*\) reads $J_t^{\alpha^*}=-\frac{\exp(Y_0)}{\gamma}\exp\left(-\gamma  e^{\int_{0}^{T} r(s)ds}x_0\right) F_t^{\alpha^*}$. Now, for $\alpha_t = \alpha^*_t$, $F_t^{\alpha^*}$ is a \(\P\)-martingale with expectation \(1 \) by Lemma~\ref{lm: extended_m_AJ_lemma} with $g_{1}(t)=-1$ and $g_{2}(t)=1$ for every \(t \in [0,T]\).
	Subsequently, $J^{\alpha^*}_t$ is a true martingale and taking the expectation, its boils down that:
	\begin{align*}
		\E_{x_0,V_0}{\big[-\frac{1}{\gamma}\exp\big(-\gamma X^{\alpha^*}_T\big) \big]}&:=\E_{x_0,V_0}{\big[J_T^{\alpha^*}\big]}=-\frac{\Gamma_0}{\gamma}\exp\Big(-\gamma  e^{\int_{0}^{T} r(s)ds}x_0\Big) \E_{x_0,V_0}{\big[F_T^{\alpha^*} \big]}\\
		&= -\frac{1}{\gamma}\exp\Big(-\gamma  e^{\int_{0}^{T} r(s)ds}x_0\Big)\Gamma_0.
	\end{align*}
	where the last inequality comes from the fact that \(F_T^{\alpha^*}\) is a true martingale. This completes the  proof of the equality~\eqref{eq:Optvalue_Exp}.
	We have verified all conditions required by martingale optimality principle in Definition~\ref{def:Martopt}, except for the admissibility of $\alpha^*$, which follows from Step~2 below.
	
	
	\smallskip
	\noindent {\sc Step~2} (\textit{Proof of Admissibility:})
	We now prove the admissibility of the candidate in equation~\eqref{eq:optcandExpo} for the optimal portfolio strategy $\alpha^*$, and, on the way,
	we deduce the bound in equation~\eqref{eq:boundExpo}.
	
	\smallskip
	\noindent In order to show that $\alpha^*$ is admissible, we have to show that (a), (b), (c) of Definition~\ref{Def:adm2} hold. Part (b) is true because~\eqref{eq:wealth} has a unique solution~\eqref{eq:wealthProcess} in terms of $(S,V,B)$.   
	To prove (c), that is $\left\{\exp\big[ - \gamma e^{ \int^T_\tau r_v dv} X^{\alpha^*}_\tau \big]: \tau \text{ stopping time with values in } [0, T] \right\}$ is a uniformly integrable family, we only need to show
	\begin{equation}
		\sup_{\tau}\E\Big[ e^{- p\gamma e^{ \int^T_\tau r(u) du} X^{\alpha^*}_\tau} \Big] < \infty, \text{ for some } p > 1.
	\end{equation}
	Note that from~\eqref{eq:wealthProcess}, \(X^{\alpha^*}_\tau = e^{\int^\tau_0 r(s)ds}x_0 + \int^\tau_0 e^{ \int^\tau_s r(u) du} \alpha_s^{*\T} \lambda_s \mathrm{d}s  + \int^\tau_0 e^{\int^\tau_s r(u) du} \alpha_s^{*\T} dB_s,\)
	then: 
	\begin{align*}
		\E\Big[ e^{- p\gamma e^{ \int^T_\tau r(u) du} X^{\alpha^*}_\tau} \Big] &\leq e^{-p\gamma e^{ \int^T_0 r(u) du}x_0} \E \Big[ \exp \Big(- p \gamma \int^\tau_0 e^{ \int^T_s r(u) du} \alpha_s^{*\T} \lambda_s ds - p \gamma \int^\tau_0 e^{\int^T_s r(u) du} \alpha_s^{*\T} dB_s \Big) \Big] \\
		& \leq e^{-p\gamma e^{ \int^T_0 r(u) du}x_0} \E \Big[ \exp \Big(p \gamma \int^T_0 e^{ \int^T_s r(u) du} \big|\alpha_s^{*\T} \lambda_s\big| ds + p^2\gamma^2 \int^T_0 e^{\int^T_s 2r(u) du} \left|\alpha_s^*\right|^2 ds\Big) \\
		&\hspace{1.5cm} \times \exp \Big(- p^2\gamma^2 \int^\tau_0 e^{\int^T_s 2r(u) du} \left|\alpha_s^*\right|^2 ds - p \gamma \int^\tau_0 e^{\int^T_s r(u) du} \alpha_s^{*\T} dB_s \Big) \Big] 
	\end{align*}
	so that by  H\"older's inequality, followed by the elementary inequality $ab\leq (a^2+b^2)/2 $ in the third line, we may write
	\vspace{-.2cm}
	\begin{align*}
		&\,\sup_{\tau}\E\Big[ e^{- p\gamma e^{ \int^T_\tau r(u) du} X^{\alpha^*}_\tau} \Big]\leq  e^{-p\gamma e^{ \int^T_0 r(u) du}x_0} \E \Big[ \exp\Big(\int^T_0 (2 p^2 \gamma^2 e^{ \int^T_s 2 r(u) du} \left|\alpha_s^*\right|^2 + 2 p \gamma e^{ \int^T_sr(u) du} \big|\alpha_s^{*\T} \lambda_s\big|) ds \Big) \Big]^{\frac12} \\
		& \hspace{4.5cm} \times \sup_{\tau} \E\Big[ \exp \Big(- 2 p^2 \gamma^2 \int^\tau_0 e^{\int^T_s 2 r(u) du} \left|\alpha_s^*\right|^2 ds  - 2 p \gamma \int^\tau_0 e^{\int^T_s r(u) du} \alpha_s^{*\T} dB_s \Big) \Big]^{\frac12}\\
			& \; \leq  \frac{e^{-p\gamma e^{ \int^T_0 r(u) du}x_0}}{\sqrt{2}} \Big( \E \big[ \exp\big(4 p^2 \gamma^2 \int^T_0 e^{ \int^T_s 2 r(u) du} \left|\alpha_s^*\right|^2 ds \big) \big] +  \E \big[ \exp\big(4 p \gamma\int^T_0 e^{ \int^T_sr(u) du} \big|\alpha_s^{*\T} \lambda_s\big| ds \big) \big]   \Big)^{\frac12} \times 1 
	\end{align*}
	which is finite since on the first hand, the Dol\'eans-Dade exponential $\mathcal E\big( -\int_0^{\cdot} 2 p \gamma e^{\int^T_s r(u) du} \alpha_s^{*\T} dB_s \big)$ satisfies the Novikov's condition by virtue of  ~\eqref{eq:assumption_novikov}, and is therefore a true martingale (see also Lemma~\ref{lm: extended_m_AJ_lemma}) with Expectation \(1\). Then by optional sampling theorem with the fact that $\tau \leq T$, it follows that the supremum is bounded by \(1\).
	 
	\noindent On the others hand, we used Jensen's inequality to obtain the bound 
	\begin{equation*}
		e^{2 \int^T_s r(u) du}\left|\alpha_s^{*}\right|^2 = \frac1{\gamma^2} |\lambda_s + \Sigma\Lambda_s|^2 \leq \frac2{\gamma^2}(|\lambda_s|^2 + |\Sigma\Lambda_s|^2) \leq \frac2{\gamma^2} (1 + |\Sigma|^2)(|\lambda_s|^2 +  |\Lambda_s|^2)
	\end{equation*}
	so that	the first term in the bracket is finite thanks to condition~\eqref{eq:assumption_novikov} with constant \(a(p) =2(8p^2 {- 2p}) \left( 1  + |{\Sigma}|^2  \right)\) after noticing that \(8p^2\leq 2(8p^2 {- 2p})\) for all \(p>1\).
	\vspace{-.2cm}
	\begin{equation}
		\E \Big[ \exp\big(4 p^2 \gamma^2 \int^T_0 e^{ \int^T_s 2 r(u) du} \left|\alpha_s^*\right|^2 ds \big) \Big] \leq  \E \left[ \exp\left({a(p) \int_0^T \left( |\lambda_s|^2 + |\Lambda_s|^2\right)  ds}\right)    \right]< \infty,
	\end{equation}
	Still  owing to the elementary inequality $|ab|\leq (|a|^2+|b|^2)/2 $, we have
	\vspace{-.2cm}
	\begin{equation*}
		e^{\int^T_s r(u) du}|\alpha_s^{*\T} \lambda_s| = \frac1{\gamma} \big(|\lambda_s|^2 + |(\Sigma\Lambda_s)^\T\lambda_s|\big) \leq \frac1{\gamma}(|\lambda_s|^2 + |\Sigma| \frac{|\lambda_s|^2 + |\Lambda_s|^2}{2}) \leq \frac1{2\gamma} (2 + |\Sigma|)(|\lambda_s|^2 +  |\Lambda_s|^2)
	\end{equation*}
	which ensures that the second term in the bracket is finite i.e.
	\vspace{-.2cm}
	\begin{equation}
		\E \Big[ \exp\big(4 p \gamma\int^T_0 e^{ \int^T_sr(u) du} \big|\alpha_s^{*\T} \lambda_s\big| ds \big) \Big] \leq  \E \left[ \exp\left({a(2p) \int_0^T \left( |\lambda_s|^2 + |\Lambda_s|^2\right)  ds}\right)    \right]< \infty,
	\end{equation}
	 thanks to condition~\eqref{eq:assumption_novikov} with constant \(a(2p) =2p \left( 2  + |{\Sigma}|  \right)\).
	We are left to prove that $\alpha^* \in L^2_{\F}([0,T], \R^d)$. We have: 
	\vspace{-.2cm}
	\begin{align*}
		\E \left[ \int_0^{T} |\c_s^*|^2 ds \right] & { =} \E \left[ \int_0^{T} \frac1{\gamma^2}  e^{-2\int_{s}^{T} r(s)ds}|\lambda_s +  \Sigma\Lambda_s|^2 ds \right] \leq   \E\left[ \frac2{\gamma^2} (1 + |\Sigma|^2) \int_0^T \left(|\lambda_s|^2 +  |\Lambda_s|^2\right) ds \right] < \infty,
	\end{align*}
	where the last term is finite due to condition  \eqref{eq:assumption_novikov} and the inequality $|z|^q\leq c_{q}e^{|z|},\; \forall \, q\geq1$.
	It becomes straightforward that $\alpha^*$ in equation~\eqref{eq:optcandExpo} is admissible. This complete the Proof \hfill$\Box$
	 
	\medskip
	\noindent {\bf Proof of Theorem~\ref{Thm:LogUtilityGeneral}:}
	From the dynamic of the controlled wealth process given in Equation~\eqref{eq:wealthPowerSol} and using the utility function \(U(x):=\log(x)\), we have
	\vspace{-.2cm}
	\begin{equation}\label{eq:LogUtil}
		U(X_T^\alpha):=\log(X_T^\alpha)
		=
		\log(x_0) +
		\int_0^T \big(r(s) + \alpha_s^\top \lambda_s - \tfrac12 \left|\alpha_s\right|^2\big)\,ds
		+
		\int_0^T \alpha_s^\top \, dB_{s}.
	\end{equation}
	We consider the process \(J_t^\alpha:=
	\log(x_0) +
	\int_0^t \big(r(s) + \alpha_s^\top \lambda_s - \tfrac12 \left|\alpha_s\right|^2\big)\,ds
	+
	\int_0^t \alpha_s^\top \, dB_{s} + \Gamma_t\) where the pair \((\Gamma,\Lambda)\) satisfies a rather simple backward stochastic differential equation (BSDE) under \(\P\) with a driver \(f: [0,T] \times \R \times \R^d \to \R\) of the form:
	\vspace{-.2cm}
	\begin{equation}\label{eq:GammaDeflog}
		\left\{
		\begin{array}{ccl}
			d\Gamma_t &=&  \; -f(t,\Gamma_t ,\Lambda_t)dt + \Lambda_t^\top dW_t, \\
			\Gamma_T &=&  0. 
		\end{array}  
		\right.
	\end{equation}
	In these terms we are bound to choose a function \(f\) for which \(J_t^\alpha\) is a
	supermartingale for all \(\alpha\in \mathcal A\) and there exists a \(\alpha^*\in \mathcal A\) such that \(J_t^{\alpha^*}\) is a martingale.
Note that $J_t^{\alpha}$ satisfies conditions~$1$ and~$2$ of Definition~\ref{def:Martopt} since $\Gamma_T=0$ and $\Gamma_0$ is a constant independent of $ \alpha \in \cA$. Applying Itô's formula on $J_t^{\alpha}$ yields
	\[dJ_t^{\alpha}=\big(r(t) + \alpha_t^\top \lambda_t - \tfrac12 \left|\alpha_t\right|^2-f(t,\Gamma_t ,\Lambda_t)\big)\,dt + \alpha_t^\top \, dB_{t} + \Lambda_t^\top dW_t\].
	In order for the third condition to be fulfilled by $J_t^{\alpha}$, we may have that 
	\[r(t) + \alpha_t^\top \lambda_t - \tfrac12 \left|\alpha_t\right|^2\leq f(t,\Gamma_t ,\Lambda_t),\quad \text{for all}\; t\in [0,T]\;\text{and}\;\alpha\in \mathcal{A}.\] 
	The pointwise maximization yields \(\c^*_t:=\lambda_t\) for every \(t\in [0,T]\).
	We define the driver of the BSDE \eqref{eq:GammaDeflog} by
	 $f(t) := r(t) + \frac{1}{2}|\lambda_t|^2$ for $t \in [0,T]$, which is $\mathcal{F}_t$-measurable.
	Classical results on BSDEs (see for example \cite{Zhang2017}) yield the existence and 
	uniqueness of a solution $ (Y, \Lambda) \in \mathbb{S}^{2}_{\mathbb{F}}([0,T],\mathbb{R}) 
	\times L^2_{\mathbb{F}}([0,T], \mathbb{R}^d)$
	to \eqref{eq:GammaDeflog} so that \(dM_t:= \lambda_t^\top \, dB_{t} + \Lambda_t^\top dW_t\) is a true martingale by Lemma~\ref{lm: extended_m_AJ_lemma} ( since Assumption~\ref{assm:gen} is in force). The admissibility of $\alpha^*$ is straightforward thanks to the same assumptions (see condition~\eqref{eq:assumption_novikov}). The corresponding value function is determined simply by the initial value of the supermartingale \(J_t^\alpha\) at \(\c^*\).
	Subtituting \(\c^*\) into~\eqref{eq:LogUtil} and taking the expectation yields:
	\vspace{-.3cm}
	\begin{align*}
		\sup_{\alpha(\cdot) \in \mathcal A}\E\Big[\log(X_T^{\alpha})\Big]
		=
		\log(x_0) +
		\E\Big[\int_0^T \big(r(s) + \tfrac12 \left|\lambda_s\right|^2\big)\,ds\Big]= \log(x_0) +
		\int_0^T r(s) \,ds + \tfrac12 \int_0^T\sum_{i=1}^{d} \theta_i^2 \E\Big[V^i_s\Big]\,ds
	\end{align*}
	This completes the proof, calling upon Proposition~\ref{prop:timeDen_}, and we are done. \hfill \( \Box \)
	
	\bigskip
	\noindent {\bf Acknowledgement:}  I thank Gilles Pag\`es, Mathieu Rosenbaum and Dro Sigui for insightful discussions. 
	\vspace{-.9cm}
		
	\bibliographystyle{plainnat}
	\bibliography{Bibliography}
	
	\appendix
	\section{Supplementary materials and Proofs.}
	\subsection{Proofs of Proposition~\ref{prop:ExpoPower_riccati_1} 
		and Theorem~\ref{Thm:ExpoPower_riccati_1}}\label{subsect:proofMresultPower2}
	\noindent {\bf Proof of Proposition~\ref{prop:ExpoPower_riccati_1} :} We write for \( \; 0\leq t\leq T\):
	{\small
		\begin{equation}\label{eq:Power_ito_gamma}
			\Gamma_t^{\frac1\delta } =\E^{\tilde{\mathbb{P}}} \Big[ \exp\Big(\int_t^T\frac{\gamma}{\delta} \big(r(s) + \frac{\left| \lambda_s \right|^2}{2(1-\gamma)} \big) ds\Big) \mid \mathcal F_t\Big]= \E^{\tilde{\mathbb{P}}} \Big[ \exp\Big(\int_t^T\frac{\gamma}{\delta} \big(r(s) + \frac{1}{2(1-\gamma)}\sum_{i=1}^d \theta_i^2 V^i_s \big) ds\Big) \mid \mathcal F_t\Big]
		\end{equation}
	}
	\noindent which ensures that $\Gamma_t>0$ $\P-a.s.$, since $V_t$ is non-negative ($V\in \mathbb R^d_+$), $r(t) > 0$ is deterministic, and $ 1-\gamma \leq \delta \leq 1$. We then have in view of~\eqref{eq:Power_ito_gamma} that there exists some positive constant \(m>0\) such that $\Gamma_t \geq m > 0$ for every \(t\in [0,T]\). An application of the exponential-affine transform formula~\cite[Theorem A.4.]{Gnabeyeu2026a} with \(\mathcal{M} \ni m(\dd s) := \frac{\gamma}{2\delta(1-\gamma)}\theta\odot\theta\,{\rm Leb}_d(\dd s) \) (where $\odot$ denote the Hadamard (pointwise or component-wise) product) yields: 
	\begin{equation*}
		\E^{\tilde{\mathbb{P}}} \Big[ \exp\Big(\int_t^T \frac{\gamma}{2\delta(1-\gamma)}\sum_{i=1}^d \theta_i^2 V^i_s ds\Big) \mid \mathcal F_t\Big] = \exp\Big( \sum_{i=1}^d\int_t^T  \big(\frac{\gamma \theta_i^2}{2\delta(1-\gamma)}  + F_i(s,\psi(T-s))\big) \tilde{g}^i_t(s) ds \Big)
	\end{equation*}
	where $\tilde{g}=(\tilde{g}^1,\ldots,\tilde{g}^d)^\T$, given as in~\eqref{eq:Condprocessg_} denotes the adjusted conditional $\tilde{\mathbb{P}}$-expected variance and $\psi\in C([0,T],(\R^d)^*)$ solves the inhomogeneous Ricatti-Volterra equation  	~\eqref{eq:RiccatiPowerpsi1}-~\eqref{eq:RiccatiPowerpsi2}.
	Consequently, ~\eqref{eq:Power_ito_gamma} becomes
	{\small
		\begin{equation*}
			\E^{\tilde{\mathbb{P}}} \Big[ \exp\big(\int_t^T\frac{\gamma}{\delta} \big(r(s) + \frac{\left| \lambda_s \right|^2}{2(1-\gamma)} \big) ds\big) \mid \mathcal F_t\Big]=  \exp\big(\frac{\gamma}{\delta}\int_t^Tr(s) ds +  \sum_{i=1}^d\int_t^T  \big(\frac{\gamma \theta_i^2}{2\delta(1-\gamma)}  + F_i(s,\psi(T-s))\big) \tilde{g}^i_t(s) ds\big) 
		\end{equation*}
	}
	\noindent This yields ~\eqref{eq:GammaPower}. Now, we set $G_t =  \gamma\int_t^T r(s) ds +  \sum_{i=1}^d\int_t^T  \big(\frac{\gamma \theta_i^2}{2(1-\gamma)}  + \delta F_i(s,\psi(T-s))\big) \tilde{g}^i_t(s) ds, \quad t \leq T.$
	Then, $\Gamma = \exp(G)$ and \(	d\Gamma_t = \Gamma_t \Big( d G_t + \frac 1 2 d\langle G \rangle_t \Big).\)
	The dynamics of \( G \) can readily be obtained by recalling \( \tilde{g}_t(s) \) from ~\eqref{eq:processg} and by observing that for fixed \( s \), the dynamics of \( t \to \tilde{g}_t(s) \) are given by \(	d\tilde{g}_t(s) = K(s-t) \, d\tilde{Z}_t \quad t \leq s.\)
	Since \( \tilde{g}_t(t) = V_t \), it follows by stochastic Fubini's theorem, see \citet[Theorem 2.2]{Veraar2012}, that the dynamics of $G$ reads as  
	\begin{align*}
		dG_t &= \;  \Big(-\gamma r(t)  -   \sum_{i=1}^d \big(\frac{\gamma \theta_i^2}{2(1-\gamma)}  + \delta F_i(s,\psi(T-s))\big) V^i_t \Big)dt \\
		& \quad \quad + \delta\sum_{i=1}^d \int_t^T  {K_i}(s-t) \big(\frac{\gamma \theta_i^2}{2\delta(1-\gamma)}  + F_i(s,\psi(T-s))\big) ds d\tilde{Z}^i_t\\
		& = \; \Big(-\gamma r(t)  -   \sum_{i=1}^d \Big(\frac{\gamma \theta_i^2}{2(1-\gamma)}  + \delta \frac {\nu_i^2} 2  (\varsigma^i(t)\psi^i(T-t))^2 \Big) V^i_t \Big)dt + \;  \delta \sum_{i=1}^d \psi^i(T-t)\nu_i \varsigma^i(t) \sqrt{V^i_t}d\widetilde{W}^i_t,
	\end{align*}
	where we changed variables and  used the inhomogeneous Riccati--Volterra equation \eqref{eq:RiccatiPowerpsi2} for $\psi$ for the last equality. This yields that 
	the dynamics of $\Gamma$ is  given by 
	\begin{align*}
		d\Gamma_t &= \;  \Gamma_t \Big(- \Big(\gamma r(t)  +\sum_{i=1}^d \frac{\gamma \theta_i^2}{2(1-\gamma)} V^i_t\Big) -\frac{\gamma}{1-\gamma} \delta^2 \rho^2 \sum_{i=1}^d\frac {\nu_i^2} 2  (\varsigma^i(t)\psi^i(T-t))^2  V^i_t\Big)dt \\ 
		& \quad \quad  + \;  \delta \Gamma_t \sum_{i=1}^d \psi^i(T-t)\nu_i \varsigma^i(t) \sqrt{V^i_t}d\widetilde{W}^i_t \\
		& =  \;  \Gamma_t \Big[ \big( - \gamma r(t)  - \frac{\gamma}{2(1-\gamma)} \left| \lambda_t\right|^2 -\frac{\gamma}{2(1-\gamma)} \delta^2\left| \Sigma \Lambda_t \right|^2\big)dt +\delta \Lambda_t^\top d\widetilde{W}_t \Big],  \label{eq:gamma_Powerheston}
	\end{align*}
	where we used for the last identity the fact that  
	\begin{equation*}
		\left| \Sigma \Lambda_t \right|^2  = \;  \sum_{i=1}^d \left( \rho_i \nu_i \varsigma^i(t) \psi^i(T-t )\right)^2 V^i_t=\rho^2\sum_{i=1}^d \left( \nu_i \varsigma^i(t) \psi^i(T-t )\right)^2 V^i_t, \quad \text{and} \quad \left| \lambda_t\right|^2  = \; \sum_{i=1}^d \theta_i^2 V^i_t
	\end{equation*}
	Arguing as in the proof of Proposition~\ref{prop:ExpoPower_riccati_2}, we show that \(	\E^{\tilde{\P}}\Big[ \sup_{t \in [0,T]} |\Gamma_t|^p \Big] < \infty\)
	for some \(p > 1\).
	\noindent As for $\Lambda$, it is clear that it belongs to $L^2_{\F}([0,T], \R^d)$ for $\varsigma$ and $\psi$ are bounded and $\E \Big[\int_0^T  \sum_{i=1}^d V^i_s ds \Big] <  \infty$ thanks to \eqref{eq:moments V1}. \hfill $\Box$
	
	\medskip
	\noindent {\bf Proof of Theorem~\ref{Thm:ExpoPower_riccati_1}:}
	We show that $J_t^{\alpha}$ fulfills the martingale optimality principle in Definition~\ref{def:Martopt}. 
	For the first condition, note that $\Gamma_T=1$ and hence $J_T^{\alpha}=\frac{1}{\gamma}(X_T^{\alpha})^{\gamma}$. Since $\Gamma_0$ is a constant independent of $ \alpha \in \cA$, $J^\alpha_0 = \frac{x^\gamma_0}{\gamma} \Gamma_0$ is a constant independent of $ \alpha \in \cA$ and consequently, the second condition is also satisfied.
	In order to show that the third condition is fulfilled, we apply Itô's formula on $J_t^{\alpha}$. We have: 	\begin{align*}
		& dJ_t^{\c} = \; \frac{(X_t^{\alpha})^{\gamma}}{\gamma} \Gamma_t \Big[\big(- \gamma r(t)  - \frac{\gamma}{2(1-\gamma)} (\left| \lambda_t\right|^2 + \delta^2\left| \Sigma \Lambda_t \right|^2)\big)dt +\delta \Lambda_t^\top d\widetilde{W}_t \Big] \\
		&\hspace{.5cm}+ \frac{(X_t^{\alpha})^{\gamma-1}}\gamma\Gamma_t \Big(\gamma X_t^{\c} \big(r(t) +  \alpha_t^\T \lambda_t\big) + \frac{\gamma(\gamma-1)}{2}X_t^{\c}\c_t^\T \c_t  \Big) dt + \Gamma_t (X_t^{\alpha})^{\gamma} \c_t^\T dB_t + \delta \c^\T_t \left( \Sigma \Lambda_t\right) (X_t^{\alpha})^{\gamma} \Gamma_t dt \\
		&\hspace{1cm}= \; J_t^{\c} \Big(\frac{\gamma(\gamma-1)}{2}\c^\T \c + \gamma \alpha_t^\T \lambda_t + \delta \c^\T_t \left( \Sigma \Lambda_t\right) - \frac{\gamma}{2(1-\gamma)} (\left| \lambda_t\right|^2 + \delta^2\left| \Sigma \Lambda_t \right|^2) -\delta \frac{\gamma}{(1-\gamma)} \Lambda_t^\top \Sigma\lambda_t\Big) dt \\
		&\hspace{1cm}+ \gamma J_t^{\c} \c_t^\T dB_t + J_t^{\c} \delta \Lambda_t^\top dW_t = J_t^{\c}  D_t(\alpha_t) dt + \gamma J_t^{\c} \c_t^\T dB_t + J_t^{\c} \delta \Lambda_t^\top dW_t.
	\end{align*}
	where the drift factor takes the form: 
	\begin{align*}
		D_t(\alpha) = & \frac{\gamma(\gamma-1)}{2}\c^\T \c + \gamma \c^\T \big(\lambda_t + \delta \Sigma \Lambda_t \big)  - \frac{\gamma}{2(1-\gamma)} \left|\lambda_t + \delta \Sigma \Lambda_t \right| ^2.
	\end{align*}
	Differentiating \(D_t(\alpha) \) with respect to \(\alpha\) and checking the second order condition, one obtains the maximizer \(\alpha^*_t=\frac{1}{1 - \gamma} \left( \lambda_t + \delta \Sigma \Lambda_t \right)\) for every \(t\in[0,T]\) that is the strategy given by Equation~\eqref{Eq:alpha_power*1}.
	
	\medskip
	\noindent 
	Evaluating the drift factor \(D_t \) at \(\alpha^*_t\) show that \(D_t(\alpha^*_t)\) vanishes to \(0\). 
	Note $D_t(\alpha)$ is a quadratic function on $\alpha$ and $\gamma - 1 < 0$. As $D_t(\alpha^*_t) =0$, then $D_t(\alpha_t) \leq 0$ for any admissible strategy \(\alpha\).
	Moreover,	solving the stochastic differential equation for $J_t^{\alpha}$ yields
	\begin{align}
		& \hspace{1cm} \forall\, t\in [0,T] \quad J_t^{\alpha} = \frac{\Gamma_0 x_0^{\gamma}}{\gamma} e^{\int_0^t D_s(\alpha_s) \, ds} F_t^\alpha \label{eq:sdeJ}\\
		& \quad \text{where}\quad  F_t^\alpha = \mathcal{E} \big( \int_0^t \gamma \, \c_s^\top \, dB_s + \delta \, \Lambda_s^\top \, dW_s \big).
	\end{align}
	Now, since $D_s(\alpha_s)\leq 0$, $e^{\int^t_0 D_s(\alpha_s) ds}$ is a non-increasing function. By our assumptions on the admissible strategies~\ref{Def:adm} and Proposition~\ref{prop:ExpoPower_riccati_1},
	$\left(\alpha, \Lambda\right) \in {L^{2,loc}_{\F}([0,T], \R^d)^2}$ and thus the stochastic exponential $F_t^\alpha$ is a local martingale (which follows from the basic properties of the Dool\'{e}ans-Dade exponential). Therefore, there exists a sequence of stopping times $\{\tau_n\}_{n\geq1}$ satisfying $\lim_{n \rightarrow \infty} \tau_n = T$, $\p$-$\as$, such that 
	\begin{equation*}
		\E[J^\alpha_{t\wedge \tau_n} | \cF_s] \leq J^\alpha_{s\wedge\tau_n}, \quad s \leq t \leq T,
	\end{equation*}
	for every $n$.  Moreover, since $J^\alpha_t$ is bounded from below by \(0\) ( $J_t^{\alpha}\geq 0$), applying Fatou's Lemma for $n\rightarrow \infty$, we deduce that $J^\alpha_t$ is a supermartingale for every arbitrary admissible strategy $\alpha$.
	It remains to show that $J_t^{\alpha^*}$ is a true martingale for the optimal strategy $\alpha^*$ in which case $e^{\int_0^t D_s(\alpha_s^*)ds}=1$ and hence $J_t^{\alpha^*}=\frac{\Gamma_0 x_0^{\gamma}}{\gamma} F_t^{\alpha^*}$.
	
	\medskip
	\noindent 
	For $\alpha_t = \alpha^*_t$, $F_t^{\alpha^*}$ is a martingale by Lemma~\ref{lm: extended_m_AJ_lemma} with 
	$g_{1}(t)= \frac{\gamma}{1-\gamma}$  , $g_{2}(t)=\delta$ for every \(t \in [0,T]\) and \(\kappa=\delta\), 
	Subsequently, $J^{\alpha^*}_t$ is a true martingale. We have verified all conditions required by martingale optimality principle, except for the admissibility of $\alpha^*$, which follows directly from Step~1 in the proof of Theorem~\ref{Thm:powerUtilityGeneral} in the more general setting, as the arguments are similar. The proof is complete. \hfill $\Box$
	\subsection{Proofs of Proposition~\ref{prop:ExpoExp_riccati_1} 
		and Theorem~\ref{Thm:ExpoExp_riccati_1} }\label{subsect:proofMresultExpo2}
	\noindent {\bf Proof of Proposition~\ref{prop:ExpoExp_riccati_1} :} Note that if $1 - \rho^2 = 0$, then $\Gamma_t \leq 1$, $\tilde{\P}$-$\as$. If rather $1 - \rho^2 > 0$, we write for every \( 0\leq t\leq T\):
	\begin{equation}\label{eq:Expo_ito_gamma}
		\Gamma_t^{1-\rho^2}=\E^{\tilde{\mathbb{P}}}\Big[\operatorname{exp}\Big(-\frac{1-\rho^2}{2}\int_t^T \left| \lambda_s \right|^2 ds\Big)|\mathcal{F}_t\Big] = \E^{\tilde{\mathbb{P}}} \Big[ \exp\Big(-\frac{1-\rho^2}{2}\int_t^T\sum_{i=1}^d \theta_i^2 V^i_s  ds \Big)\mid \mathcal F_t\Big],
	\end{equation}
	which ensures that $\Gamma_t\leq 1$, $\tilde{\P}-a.s.$, since $V\in \mathbb R^d_+$. An application of the exponential-affine transform formula in~\cite[Theorem A.4]{Gnabeyeu2026a} with \(\mathcal{M} \ni m(\dd s) := -\frac{1-\rho^2}{2}\theta\odot\theta\,{\rm Leb}_d(\dd s) \) (where $\odot$ denote the Hadamard (pointwise or component-wise) product) yields: 
	\begin{equation}
		\E^{\tilde{\mathbb{P}}} \Big[ \exp\Big(\int_t^T -\frac{1-\rho^2}{2}\sum_{i=1}^d \theta_i^2 V^i_s ds\Big) \mid \mathcal F_t\Big] = \exp\Big( \sum_{i=1}^d\int_t^T  \big(-\frac{1-\rho^2}{2} \theta_i^2  + \tilde{F}_i(s,\tilde{\psi}(T-s))\big) \tilde{g}^i_t(s) ds \Big)
	\end{equation}
	where $\tilde{g}=(\tilde{g}^1,\ldots,\tilde{g}^d)^\T$, given as in~\eqref{eq:Condprocessg_} denotes the adjusted conditional $\tilde{\mathbb{P}}$-expected variance and \[\tilde{F}_i(s,\tilde{\psi}) = -\rho\theta_i \nu_i \varsigma^i(s) \tilde{\psi}^i + (D^\top \tilde{\psi})_i + \frac {\nu_i^2} 2 (\varsigma^i(s)\tilde{\psi}^i)^2, \quad i=1,\ldots,d,\] and 
	$\tilde{\psi}\in C([0,T],(\R^d)^*)$ solves the inhomogeneous Ricatti-Volterra equation 
	\begin{align}
		\tilde{\psi}^i(t)&= \int_0^t K_i(t-s)\big(-\frac{ \theta_i^2(1-\rho^2)}{2}  + \tilde{F}_i(T-s,\tilde{\psi}(s))\big) ds, \quad i=1,\ldots,d.
	\end{align} 
	Setting \(\tilde{\psi}= (1-\rho^2)\psi \) implies that \(\tilde{F}_i(s,\tilde{\psi}) = (1-\rho^2) F_i(s,\psi) \quad i=1,\ldots,d\), where \(F_i\) is given in~\eqref{eq:RiccatiExpopsi2}.
	Therefore, it holds that for all \(t\in[0,T]\),
	\begin{equation*}
		\E^{\tilde{\mathbb{P}}} \Big[ \exp\Big(\int_t^T -\frac{1-\rho^2}{2}\sum_{i=1}^d \theta_i^2 V^i_s ds\Big) \mid \mathcal F_t\Big] = \exp\Big( (1-\rho^2) \sum_{i=1}^d\int_t^T  \big(-\frac{\theta_i^2}{2}   + F_i(s,\psi(T-s))\big) \tilde{g}^i_t(s) ds \Big)
	\end{equation*}
	Consequently, ~\eqref{eq:GammaExpo} holds and  $\psi\in L^2([0,T],(\R^d)^*)$ solves the inhomogeneous Ricatti-Volterra equation ~\eqref{eq:RiccatiExpopsi1}-~\eqref{eq:RiccatiExpopsi2}.
	Note that for every \(t\in[0,T]\), Equation~\eqref{eq:GammaExpo} implies straightforwardly that $\Gamma_t>0$, $\P-a.s.$. 
	Now, we set 
	$$G_t = \sum_{i=1}^d\int_t^T  \big(-\frac{\theta_i^2}{2}   + F_i(s,\psi(T-s))\big) \tilde{g}^i_t(s) ds, \quad t \leq T.$$
	Then, $\Gamma = \exp(G)$ and \(d\Gamma_t = \Gamma_t \Big( d G_t + \frac 1 2 d\langle G \rangle_t \Big).\)
	The dynamics of \( G \) can readily be obtained by recalling \( \tilde{g}_t(s) \) from ~\eqref{eq:processg} and by observing that for fixed \( s \), the dynamics of \( t \to \tilde{g}_t(s) \) are given by \(d\tilde{g}_t(s) = K(s-t) \, d\tilde{Z}_t \quad t \leq s.\)
	Since \( \tilde{g}_t(t) = V_t \), it follows by stochastic Fubini's theorem, see \citet[Theorem 2.2]{Veraar2012}, that the dynamics of $G$ reads as  
	\begin{align*}
		dG_t &= \;  \Big( \sum_{i=1}^d \big(\frac{ \theta_i^2}{2}  - F_i(s,\psi(T-s))\big) V^i_t \Big)dt + \sum_{i=1}^d \int_t^T  {K_i}(s-t) \big(-\frac{ \theta_i^2}{2}  + F_i(s,\psi(T-s))\big) ds d\tilde{Z}^i_t\\
		& = \; \Big( \sum_{i=1}^d \Big(\frac{ \theta_i^2}{2}  - \frac {\nu_i^2} 2 (1-\rho^2) (\varsigma^i(t)\psi^i(T-t))^2 \Big) V^i_t \Big)dt + \sum_{i=1}^d \psi^i(T-t)\nu_i \varsigma^i(t) \sqrt{V^i_t}d\widetilde{W}^i_t,
	\end{align*}
	where we changed variables and  used the Riccati--Volterra equation \eqref{eq:RiccatiExpopsi2} for $\psi$ for the last equality. This yields that 
	the dynamics of $\Gamma$ is  given by 
	\begin{align*}
		d\Gamma_t &= \;  \Gamma_t \Big(\sum_{i=1}^d \Big(\frac{ \theta_i^2}{2}  - \frac {\nu_i^2} 2  (1-\rho^2) (\varsigma^i(t)\psi^i(T-t))^2 \Big) V^i_t + \sum_{i=1}^d\frac {\nu_i^2} 2  (\varsigma^i(t)\psi^i(T-t))^2  V^i_t\Big)dt \\ 
		& \quad \quad  + \; \Gamma_t \sum_{i=1}^d \psi^i(T-t)\nu_i \varsigma^i(t) \sqrt{V^i_t}d\widetilde{W}^i_t =  \;  \Gamma_t \Big[ \big( \frac{1}{2} \left| \lambda_t\right|^2 +\frac{1}{2} \left| \Sigma \Lambda_t \right|^2\big)dt + \Lambda_t^\top d\widetilde{W}_t \Big],  
	\end{align*}
	where we used for the last identity the fact that 
		\begin{equation*}
				\left| \Sigma \Lambda_t \right|^2  = \; \rho^2\sum_{i=1}^d \left( \nu_i \varsigma^i(t) \psi^i(T-t )\right)^2 V^i_t, \quad \text{and} \quad \left| \lambda_t\right|^2  = \; \sum_{i=1}^d \theta_i^2 V^i_t
			\end{equation*}
	As  $0<\Gamma_t\leq1$, $\P-a.s.$, we have that \(\Gamma \in \mathbb{S}^{\infty}_{\F}([0,T], \R)\).  As for $\Lambda$, it is clear that it belongs to $L^2_{\F}([0,T], \R^d)$ since $\varsigma$ and $\psi$ are bounded, $\psi$ is continuous thus bounded and $\E \Big[\int_0^T  \sum_{i=1}^d V^i_s ds \Big] <  \infty$ by \eqref{eq:moments V1}. Consequently,
	$\left(\Gamma, \Lambda\right)\in\mathbb{S}^{\infty}_{\F}([0,T], \R) \times L^2_{\F}([0,T], \R^d)$. This complete the Proof \hfill$\Box$
	
	\medskip
	\noindent {\bf Proof of  Theorem~\ref{Thm:ExpoExp_riccati_1} :}
	We show that $J_t^{\alpha}$ fulfills the martingale optimality principle. 
	For the first condition, note that $\Gamma_T=1$ and hence $J_T^{\alpha}=-\frac{1}{\gamma}\exp\left(-\gamma  X^\alpha_T\right)$. Since $\Gamma_0$ is a constant independent of $ \alpha \in \cA$, $J^\alpha_0 = -\frac{1}{\gamma}\exp\left(-\gamma  e^{\int_{0}^{T} r(s)ds}x_0\right) \Gamma_0$ is a constant independent of $ \alpha \in \cA$ and thus the second condition is also satisfied.
	In order to show that the third condition is also fulfilled, we apply Itô's formula on $J_t^{\alpha}$. 
	Setting \(Y_t := -\frac{1}{\gamma}\exp\left(-\gamma  e^{\int_{t}^{T} r(s)ds}X^\alpha_t\right)\), we write by It\^o's lemma:
	\begin{align*}
		dY_t &= \left(\gamma r(s) e^{\int_{t}^{T} r(s)ds}X^\alpha_t - \gamma e^{\int_{t}^{T} r(s)ds}\big( r(t) X^{\alpha}_t  + \alpha_t^\T \lambda_t \big) + \frac{\gamma^2}{2} \alpha_t^\T \alpha_t e^{\int_{t}^{T} 2r(s)ds}\right) Y_t dt - \gamma e^{\int_{t}^{T} r(s)ds} Y_t \alpha_t^\T dB_t \\
		&=\left( - \gamma e^{\int_{t}^{T} r(s)ds} \alpha_t^\T \lambda_t + \frac{\gamma^2}{2} \alpha_t^\T \alpha_t e^{\int_{t}^{T} 2r(s)ds}\right) Y_t dt - \gamma e^{\int_{t}^{T} r(s)ds} Y_t \alpha_t^\T dB_t
	\end{align*}
	Consequently, by It\^o's product rule, one may write
	\begin{align*}
		dJ_t^{\c} =& \; Y_t \Gamma_t \Big[ \big( \frac{1}{2} \left| \lambda_t\right|^2 +\frac{1}{2} \left| \Sigma \Lambda_t \right|^2\big)dt + \Lambda_t^\top d\widetilde{W}_t \Big] + Y_t\Gamma_t \left( - \gamma e^{\int_{t}^{T} r(s)ds} \alpha_t^\T \lambda_t + \frac{\gamma^2}{2} \alpha_t^\T \alpha_t e^{\int_{t}^{T} 2r(s)ds}\right) dt \\
		&-  Y_t \Gamma_t \gamma e^{\int_{t}^{T} r(s)ds} \alpha_t^\T dB_t -\gamma e^{\int_{t}^{T} r(s)ds} \c^\T_t \left( \Sigma \Lambda_t\right) Y_t \Gamma_t dt \\
		&= \; J_t^{\c} \Big(\frac{\gamma^2}{2} e^{\int_{t}^{T} 2r(s)ds} \c^\T_t \c_t - \gamma e^{\int_{t}^{T} r(s)ds} \alpha_t^\T \lambda_t  -\gamma e^{\int_{t}^{T} r(s)ds} \c^\T_t \left( \Sigma \Lambda_t\right)  + \frac{1}{2} (\left| \lambda_t \right|^2 + \left| \Sigma \Lambda_t \right|^2) + \Lambda_t^\top \Sigma\lambda_t\Big) dt\\
		&- \gamma J_t^{\c} e^{\int_{t}^{T} r(s)ds} \c_t^\T dB_t + J_t^{\c}  \Lambda_t^\top dW_t = J_t^{\c}  D_t(\alpha_t) dt - \gamma J_t^{\c} e^{\int_{t}^{T} r(s)ds} \c_t^\T dB_t + J_t^{\c}  \Lambda_t^\top dW_t.
	\end{align*}
	where the drift factor takes the form: 
	\begin{align*}
		D_t(\alpha) = & \frac{\gamma^2}{2} e^{\int_{t}^{T} 2r(s)ds}  \c^\T \c -\gamma  e^{\int_{t}^{T} r(s)ds}  \c^\T \big(\lambda_t + \Sigma \Lambda_t \big)  + \frac{1}{2} \left|\lambda_t + \Sigma \Lambda_t \right| ^2.
	\end{align*}
	Differentiating \(D_t(\alpha) \) with respect to \(\alpha\) and checking the second order condition, the maximizer \(\alpha^*_t=\frac1\gamma  e^{-\int_{t}^{T} r(s)ds} \left( \lambda_t +  \Sigma \Lambda_t \right)\) for every \(t\in[0,T]\) that is the strategy given by Equation~\eqref{Eq:alpha_Expo*1}.
	Evaluating the drift factor \(D_t \) at \(\alpha^*_t\) show that \(D_t(\alpha^*_t)\) vanishes to \(0\). 
	Note $D_t(\alpha)$ is a quadratic function on $\alpha$ and $\gamma - 1 < 0$. As $D_t(\alpha^*_t) =0$, then $D_t(\alpha_t) \geq 0$ for any admissible strategy \(\alpha\).
	Moreover, solving the stochastic differential equation for $J_t^{\alpha}$ yields
	\begin{align*}
		&\hspace{3cm}\forall \, t\in[0,T],\quad J_t^{\alpha}= -\frac{\Gamma_0}{\gamma}\exp\left(-\gamma  e^{\int_{0}^{T} r(s)ds}x_0\right)e^{\int^t_0 D_s(\alpha_s) ds} F_t^\alpha\\
		&\text{where} \quad F_t^\alpha= \mathcal E\Big(\int_0^t -\gamma  e^{\int_{s}^{T} r(u)du} \c_s^\T dB_s + \Lambda_s^\top dW_s \Big). 
	\end{align*}
	Note $D_t(\alpha^*_t) = 0$, then $D_t(\alpha_t) \geq 0$.
	By our assumptions on the admissible strategies~\ref{Def:adm2} and Proposition~\ref{prop:ExpoExp_riccati_1},
	$\left(\alpha, \Lambda\right) \in {L^{2,loc}_{\F}([0,T], \R^d)^2}$ and thus, the stochastic exponential $F_t^\alpha$ is a local martingale (which follows from the basic properties of the Dool\'{e}ans-Dade exponential). Therefore, there exists a sequence of stopping times $\{\tau_n\}_{n\geq1}$ satisfying $\lim_{n \rightarrow \infty} \tau_n = T$, $\p$-$\as$, such that that $F^\alpha_{t \wedge \tau_n}$ is a positive martingale for every $n$.
	
	\noindent Furthermore, $- \frac{\Gamma_0}{\gamma} \exp\big[ - \gamma e^{ \int^T_0 r(u)du} x_0 \big] e^{\int^t_0 D_s(\alpha_s) ds}$ is non-increasing. Therefore, $J^\alpha_{t\wedge \tau_n}$ is a supermartingale. Then for $s \leq t$, $\E[ J^\alpha_{t\wedge \tau_n} | \cF_s] \leq J^\alpha_{s\wedge \tau_n}$. It implies that for any set $ A \in \cF_s$, 
	\begin{equation*}
		\E[ J^\alpha_{t\wedge \tau_n} \id_A] \leq \E[J^\alpha_{s\wedge \tau_n}\id_A], \quad s \leq t.
	\end{equation*}
	for every $n$.  Since $\left(\exp\big[ - \gamma e^{ \int^T_{t\wedge \tau_n} r(u) du} X_{t \wedge \tau_n} \big]\right)_n$ is uniformly integrable (Definition~\ref{Def:adm2}) and $\Gamma$ is bounded, $\left(J^\alpha_{t\wedge \tau_n} \right)_n$ and $\left(J^\alpha_{s\wedge \tau_n} \right)_n$ are uniformly integrable. Let $n \rightarrow \infty$, then $\E[ J^\alpha_t \id_A] \leq \E[J^\alpha_s \id_A]$. Then we deduce that $J^\alpha$ is a supermartingale  for every arbitrary admissible strategy $\alpha$.\\
	It remains to show that $J_t^{\alpha^*}$ is a true martingale for the optimal strategy $\alpha^*$ in which case $e^{\int_0^t D_s(\alpha_s^*)ds}=1$ and hence $J_t^{\alpha^*}=-\frac{\Gamma_0}{\gamma}\exp\left(-\gamma  e^{\int_{0}^{T} r(s)ds}x_0\right) F_t^{\alpha^*}$.
	
	\medskip
	\noindent 
	For $\alpha_t = \alpha^*_t$, $F_t^{\alpha^*}$ is a martingale by Lemma~\ref{lm: extended_m_AJ_lemma} with $g_{1}(t)=-1$ and $g_{2}(t)=1$ for every \(t \in [0,T]\).
	Subsequently, $J^{\alpha^*}_t$ is a true martingale. We have verified all conditions required by martingale optimality principle in Definition~\ref{def:Martopt}, except for the admissibility of $\alpha^*$, which follows directly from Step~1 in the proof of Theorem~\ref{Thm:ExpoUtilityGeneral} below in the more general setting, as the arguments are similar. The proof is complete. \hfill $\Box$
	
	\subsection{Proof of Lemma~\ref{lemma:Volt-H} }\label{subsect:proofLemmaVolt-H}
	\noindent The first claim in~\eqref{eq:Resolventoperator-solu} follows by the associativity of
	the convolution and applying the fundamental theorem of calculus. For the second claim, one could employ the calculus of convolutions and resolvents. However, we instead use the Laplace transform to deduce that:
	\[L_{\big((f \star r)^\prime \star (K \star g)\big)} (t)= t L_{f \star r}(t)L_{K \star g}(t)= t L_{f}(t)L_{r}(t)L_{K}(t)L_{g}(t) = L_f(t)L_g(t) = L_{f*g}(t).\]
	\noindent where the penultimalte equality come from applying laplace transform to equation ~\ref{eq:Resolvent}. We next conclude by the injectivity of Laplace transform.\hfill $\Box$
\end{document}